\numberwithin{equation}{section}
\numberwithin{figure}{section}
\renewcommand*\env@cases[1][1.2]{%
  \let\@ifnextchar\new@ifnextchar
  \left\lbrace
  \def\arraystretch{#1}%
  \array{@{}c@{\quad}l@{}}%
}
\renewcommand{\tilde}{\widetilde}
\newcommand{\LB}{\lambda_\rmB}
\newcommand{\Rd}{{\R^d}}
\newcommand{\EB}{\calE_{\rm{B}}}
\newcommand{\Imix}{\calI_\Lambda}
\newcommand{\ImOne}{\calI_{\Lambda,1}}
\newcommand{\ImSec}{\calI_{\Lambda,2}}
\newcommand{\Ipmix}{\calI_{p,\Lambda}}
\newcommand{\HELL}{{\mathsf{He}}}
\newcommand{\IFisher}{\calI_{\rm{Fisher}}}
\newcommand{\IpFisher}{\calI_{p,\rm{Fisher}}}
\newcommand{\Dreact}{\calD_{\rm{react}}}
\newcommand{\Dpreact}{\calD_{p,\rm{react}}}
\newcommand{\AAA}{} 
\newcommand{\EEE}{\color{black}}
 \newcommand{\TODO}[1]{}
\begin{document}

\title{Convergence to self-similar profiles in reaction-diffusion systems%
    \thanks{Research partially supported by DFG via SFB\,910 ``Control of self-organizing
nonlinear systems'' (project no.\,163436311), subproject A5 ``Pattern
formation in coupled parabolic systems''.} }

\author{Alexander Mielke\thanks{Weierstraß-Institut f\"ur Angewandte
   Analysis und Stochastik, Mohrenstr.\,39, 10117 Berlin and  Humboldt
 Universit\"at zu Berlin, Institut f\"ur Mathematik, Berlin,
 Germany.}{}\ \ and Stefanie Schindler\thanks{Weierstraß-Institut f\"ur Angewandte
   Analysis und Stochastik, Mohrenstr.\,39, 10117 Berlin}
}

\date{5 April 2023}
 
\maketitle

\begin{abstract} We study a reaction-diffusion system on the real line, where
  the reactions of the species are given by one reversible reaction pair
  $\alpha X_1 \rightleftharpoons \beta X_2$ satisfying the mass-action
  law.  We describe different positive limits at $x\to -\infty$ and
  $x\to +\infty$ and investigate the long-time behavior. Rescaling space and
  time according to the parabolic scaling with $\tau= \log(1{+}t)$ and
  $y= x/\sqrt{1{+}t}$, we show that solutions converge exponentially for
  $\tau \to \infty$ to a similarity profile. In the original
  variables, these profiles correspond to asymptotically self-similar behavior
  describing the phenomenon of diffusive mixing of
  the different states at infinity.
 
  Our method provides global exponential convergence for all initial states
  with finite relative entropy. For the case $\alpha = \beta \geq 1$ we can
  allow for self-similar profiles with arbitrary equilibrated states,
  while for $\alpha > \beta \geq 1$ we need to assume that the two states at
  infinity are sufficiently close such that the self-similar profile is
  relative flat. \medskip

\noindent
\emph{Keywords:} Mass-action kinetics, relative Boltzmann entropy,
infinite-mass systems, energy-dissipation estimates, self-similar profiles.   \smallskip

\noindent
\emph{MSC:} 
35K57 
35C06 
35B45 

\end{abstract}

\section{Introduction}
\label{se:intro}

For a nonlinear coupled reaction-diffusion systems with mass-action kinetics
satisfying a detailed balance condition, the long-time behavior of its
solutions is investigated. While there already exists a wide variety of
literature for these systems posed on bounded domains \cite{Alik79AIPR,
  Smol94SWRD, DesFel06EDTE, DesFel07EMRD, BLMV14LFBD, MiHaMa15UDER}, much less is known if
the underlying domain is chosen to be the whole space $\Omega = \R^d$, see
e.g.\ \cite{HHMM18DEER} for the case with finite mass. We will see that in the
case of unbounded domains and infinite mass, 
similarity profiles can be a crucial tool to describe their asymptotic behavior
qualitatively. Here we follow the ideas on \emph{diffusive mixing} as
developed in \cite{BriKup92RGGL, ColEck92SPSG, GalMie98DMSS}; however, our
approach is completely different. Instead of 
doing a local analysis of the relevant similarity profile, whose
existence is established in \cite{MieSch21?ESPS}, we use the relative Boltzmann
entropy and derive exponential convergence globally, i.e.\ for all initial
conditions with finite relative entropy. Thus, our work is closer to
\cite{BLMV14LFBD} which derives global energy-dissipation estimates for
suitable relative entropies to study exponential convergence to 
(non-equilibrium) steady states on bounded
domains but with nontrivial prescribed Dirichlet boundary  data. To the best of
the authors' knowledge, the present work is the first relative-entropy approach
to systems with infinite mass. 

Asymptotic self-similar behavior for scalar, nonlinear diffusion equations is a
classical theory in the case of finite mass, see e.g.\ \cite{CarTos00ALDS,
  Vazq07PMEM} and the references therein. The case of inifinite mass was
initiated in \cite{Pele71ABSP} and extended in \cite{VanPel77ABSN, Bert82ABSN},
where the theory was based on comparison principles. Systems of partial
differential equations with different limits at $x\to -\infty$ and $x \to
\infty$ are studied in \cite{MarPan01DPSC} in the context of adiabatic gas flow
through a porous medium. The Convergence to asymptotic profiles is established
using local estimates with weighted Sobolev norms. 

For our model, we consider two species $X_1$ and $X_2$ on $\Omega = \R^d$
and denote their concentrations at time $t >0$ and at position $x \in \R^d$ by
$\tilde \bfu(t,x) = \big(\tilde u(t,x),\tilde v(t,x)\big)^\top$. The species
diffuse with diffusion coefficients $d_1,d_2 >0$ and interact through the
single reversible chemical reaction pair
$\alpha X_1 \overset{k}{\rightleftharpoons} \beta X_2$ with each other.  Here
$\alpha,\beta \geq 1$ are the stoichiometric coefficients and $k >0$ denotes
the reaction \AAA strength. \EEE The change of the concentrations can be
described by the corresponding reaction-diffusion system
\begin{equation} \label{eq:RDS.vector}
\tilde \bfu_t = \bfD \, \Delta \tilde \bfu+ \bfR(\tilde \bfu) 
	\quad \text{ for } t > 0,  \; x \in  \R^d,
\end{equation}
with diffusion matrix $\bfD = \mathrm{diag}(d_1,d_2)$ and, by using the law of
mass action, reaction term
\[
  \bfR(\tilde \bfu) = k (\tilde{v}^\beta {-} \tilde{u}^\alpha) \binom{\alpha}{-\beta}.
\]
Additionally, we require that the solutions are prescribed at infinity by
states that are in reactive equilibrium. In the case $d=1$ this simply means
that we require that the solutions are in equilibria at both sides of infinity,
thus we have $\tilde \bfu(t,\pm \infty) = (A_\pm^\beta , A_\pm^\alpha)^\top$
for two given constants $A_-,A_+ \geq 0$. Here we use that our system
\eqref{eq:RDS.vector} has the special property that it possesses a continuum of
constant solutions.  In contrast to reaction-diffusion systems with a finite
number of constant steady states, where the typical long-time behavior is given
by traveling waves or pulses (see for example \cite{Smol94SWRD, VoVoVo94TWSP,
  Volp14EPDE}), we show that the solutions converge to so-called self-similar
profiles when time goes to infinity. This \emph{asymptotically self-similar
  behavior} is called \textit{diffusive mixing} in \cite{ColEck92SPSG,
  GalMie98DMSS, MiScUe01SDDE}, where it was studied for the special system of
the real Ginzburg-Landau equation.

The existence of relevant self-similar profiles is established in
\cite[Sec.\,5]{MieSch21?ESPS}.  In the present paper, we focus on
proving the convergence towards these profiles. Our analysis is based on two
crucial steps:
\begin{itemize}
\itemsep=-0.3em
\item
instead of the physical variables $(t,x)$ 
we use the parabolic scaling variables $(\tau,y)$ defined via 
$ \tau=\log(t{+}1)$ and $y=x/\sqrt{t{+}1}$ and

\item
we derive energy-dissipation estimates for a relative entropy (of
Boltzmann type).   
\end{itemize}

Doing the transformation $\bfu(\tau,y) =\tilde \bfu(t,x)$,  the
scaled system reads 
\begin{equation} \label{eq:RDS.vector.scaled}
\bfu_\tau = \bfD \, \bfu_{yy}+ \frac{y}{2} \bfu_y + \ee^\tau \, \bfR(\bfu) \quad
\text{ with } \bfu(\tau,\pm \infty) = (A_\pm^\beta , A_\pm^\alpha)^\top.
\end{equation}
Note that we cannot scale the size of the variables $u$ and $v$ because of
the fixed boundary conditions. Clearly, the parabolic scaling is good for the 
diffusion term, but we see that an additional time dependent factor
appears in front of the 
reaction term. As the prefactor is exponentially growing in time, it forces the
reaction to decay fast in order to equilibrate the whole system. This is
indeed established in \cite{GalSli22DREE} for the case $ \alpha=2 $ and
$ \beta =1 $. 

The equation for the asymptotic profile $\bfU=(U,V):\R\to \R^2$ can
be motivated as follows. We 
 consider \eqref{eq:RDS.vector.scaled} with the constraint
$\bfR(\bfu) = 0$ and replace the limit $\ee^\tau \, \bfR(\bfu)
\to$ ``$ \infty \bold 0$'' by a vector-valued Lagrange multiplier $\bflambda =
\Lambda(y)\binom{\alpha }{-\beta }$. This leads to the profile equation for $\bfU$ in
the form   
\begin{equation}
  \label{eq:I.ProfEqn}
  0 = \bfD \bfU''(y) + \frac y2 \bfU'(y) + \Lambda(y) \binom{\alpha }{-\beta}, \quad
  0=\bfR(\bfU(y)), \quad \bfU(\pm\infty) = \binom{A_\pm^\beta }{ A_\pm^\alpha } .  
\end{equation}
We refer to  \cite{MieSch21?ESPS} for the treatment of this and more
general profile equations, where the approach from \cite{GalMie98DMSS} relying
on the theory of monotone operators is generalized to the vector-valued
case. 

The idea is to use entropy estimates to study the asymptotic 
behavior of solutions of reaction-diffusion systems with
mass-action kinetic (and detailed balance). It traces back to the works of 
\cite{Grog83ABSC, Grog92FEEA, GlGrHu96FEDR} and has been refined by many 
authors in recent years, see for example
\cite{DesFel06EDTE,DesFel07EMRD,MiHaMa15UDER, FelTan17EECE, PiSuZo17ABRD}.
The usual strategy is to take a relative entropy given by
\[
\calE_\phi (\bfu(\tau) \,\vert \, \bfU) = \int_\Omega \sum_{j=1}^{j_*} \phi \big(
u_j(\tau,y) / U_j(y)\big) U_j(y) \dd y
\]
for a convex entropy function $\phi$ satisfying $\phi(\rho)> \phi(1) = 0$ for all
$\rho\neq 1$, and to show that $\calE_\phi$ is a Lyapunov function, i.e.\ 
along solutions a so-called entropy-dissipation relation
\[
\frac{\rmd}{\rmd \tau} \calE_\phi(\bfu(\tau) \,\vert \, \bfU) = -
\calD_\phi(\bfu(\tau)) \leq 0
\]
holds with a non-negative dissipation functional $\calD_\phi$.  If one can establish a
lower bound $\calD_\phi \geq \mu \calE_\phi$ with $\mu>0$, then Gr\"onwall's lemma
gives $\calE_\phi(\bfu(\tau)|\bfU) \leq \ee^{-\mu \tau} \calE_\phi(\bfu(0)|\bfU)$ which
implies convergence of $\bfu(\tau)$ to $\bfU$ because $\calE_\phi(\bfu|\bfU)=0$
if and only if $\bfu=\bfU$.

Due to the mass-action kinetics, the relative Boltzmann entropy $\calE_{\rmB}$
with Boltzmann function $\phi=\LB: z\mapsto z \log z-z+1$ is the only choice in
order to obtain the right sign for the dissipation term coming from the
reaction with $\alpha\neq \beta$. This can be seen in more detail in
Proposition \ref{prop:Dissipation} and will be important in Section
\ref{se:GeneralCase}. We will see in Section \ref{su:al=be.GenEntropy} that for
$\alpha=\beta$ other entropies are useful.

The classical studies on the long-time behavior of solutions for the unscaled
system \eqref{eq:RDS.vector} in a bounded domain $\Omega$ (see.\ e.g.\
\cite{Grog83ABSC, DesFel06EDTE, Miel17UEDR} and the references therein) rely
exactly on this approach. However, in our case we will not obtain a true
Lyapunov function on the unbounded domain $\Omega = \R^1$, because of the fact
that $\bfU=(U,V)^\top$ is not a true steady state of our scaled system
\eqref{eq:RDS.vector.scaled}, see the Lagrange multiplier $\Lambda$ in the
profile equation \eqref{eq:I.ProfEqn}. Only in the very special case
$\alpha = \beta $ and $d_1=d_2$ one has $\Lambda\equiv 0$, and we easily will
obtain
$\calE_\rmB(\bfu(\tau)|\bfU) \leq \ee^{-\frac12 \tau}
\calE_\rmB(\bfu(0)|\bfU)$.

In Section \ref{se:LinDiffEq} we explain our method by applying it to the
scaled linear diffusion equation $u_\tau = D u_{yy} + \frac 
y2 u_y$ with boundary conditions $u(\tau,\pm\infty)=A_\pm>0$. This leads to
the energy-dissipation estimate 
\[
\frac{\rm d}{\rm d \tau}\calE_\phi(u|U) = -\IFisher(u) - \frac12 \calE_\phi(u|U) \quad
\text{with } \IFisher(u)= D \int_\R  \phi''(u/U) \big( (u/U)_y\big)^2
U \dd y. 
\] 
The new and very helpful term $-\frac12\calE_\phi(u \,| \,U)$ arises from the drift
term $\frac y2 u_y$ which stems from the parabolic scaling. Using
$\calI_\text{Fisher}\geq 0$, we obtain $\calE_\phi(u(\tau)| \, U) \leq
\ee^{-\frac12 \tau} \calE_\phi(u(0)| \, U)$ without using any Poincar\'e or
log-Sobolev estimate on $\R$. For that reason, the factor $1/2$ will be called 
the \textit{bonus factor}, subsequently.

In Section \ref{se:RDS} we show that in our case we have $\frac\rmd{\rmd
  \tau} \calE_\rmB(\bfu(\tau)|\bfU) = - \calD_\rmB(\bfu)$ with a dissipation
functional that can be written as
\[
\calD_\rmB = \IFisher + \ee^\tau \, \Dreact + \frac12 \calE_{\rm B}  - \Imix \quad
\text{with }\Dreact= \int_\R k U^\alpha \,\Gamma\big( \big(\frac{u(y)}{U(y)}\big)^\alpha,
\big(\frac{v(y)}{V(y)}\big) ^\beta \big) \dd y,
\]
where $\Gamma(a,b)\geq 0$ is defined by
\begin{equation} \label{eq:defGamma}
\Gamma (a,b) := \begin{cases} (a{-}b) \big( \log a {-} \log b \big) \geq 0 &
  \text{for }a,b>0,\\ 
 0 & \text{for } a=b=0, \\
\infty& \text{for }(0,c) \text{ and }(c,0) \text{ for }c>0.
\end{cases}
\end{equation}
Here $\IFisher$ consists of two non-negative terms, one for $u$ and one for
$v$. The special form of $\Dreact$ and its positivity arise from the special
interaction of the mass-action law and the Boltzmann entropy, namely with 
$\rmD\calE_\rmB(\bfu|\bfU)=\binom{\log u}{\log v}$ and the logarithm rules one
finds $(u^\alpha{-}v^\beta) \tbinom{\alpha}{-\beta} \cdot \tbinom{\log u}{\log
  v} = \Gamma ( u^\alpha , v^\beta )\geq 0$. 
Again we have the bonus factor $1/2$ arising from the drift term $\frac
y2 \bfu_y$. The new and difficult term is the mixed term
\[
\Imix(\bfu) = \int_\R \Big(  \big( 1- \frac
uU \big)\alpha -  \big(1- \frac vV\big)\beta  \Big) \Lambda \dd y , 
\]
which arises from the fact that $\bfU$ is not a steady state, but needs the Lagrange
multiplier $\Lambda$, see the profile equation \eqref{eq:I.ProfEqn}. In
particular, $\Imix$ does not have a specific sign.     

The derivation of the useful splitting of $\calD_\rmB$ is part of Section
\ref{su:Dissipation} and the precise statement can be found in Proposition
\ref{prop:Dissipation}.  Because of the unboundedness of $\Omega =\R^1$ we will
not be able to take advantage of the Fisher information $\IFisher$, but we can
rely on the bonus factor $1/2$. Moreover, the reactive dissipation $\Dreact$
has the factor $\ee^\tau$ in front, from which we will benefit in Sections
\ref{se:SpecialCase} and \ref{se:GeneralCase} to control $\Imix$.  Since the
mixed term $\Imix$ has no fixed sign, it is possible that the relative
Boltzmann entropy $\calE_\rmB$ may grow, i.e.\ it is not a true Lyapunov
function. So our aim will be to show that
\begin{equation}
  \label{eq:I.ImixControl}
  \frac12 \,\calE_\rmB(\bfu\,|\,\bfU) +\ee^\tau \Dreact(\bfu) - \Imix(\bfu)
\geq (\eta {-} \mu \, \ee^{-\tau} ) \,\calE_\rmB(\bfu\,|\,\bfU) - K \ee^{-\sigma \tau} 
\quad \text{ for
}\tau\geq 0
\end{equation}
for some $\eta,\, \sigma >0$ \AAA and $ \mu,\, K \geq 0$. From this, \EEE our convergence results in
Theorems \ref{thm:specialCase} and \ref{thm:generalCase} will follow.

The control of the problematic term $\Imix$ is different in the simpler case
$\alpha = \beta  \geq 1$ (see Section \ref{se:SpecialCase}) and in the more
difficult case $\alpha > \beta \geq 1$ (see Section
\ref{se:GeneralCase}). For $ \alpha = \beta \geq 1$, the integrand of $\Imix $
only depends on $\frac uU -\frac vV$ and thus can be controlled by $\Dreact$ alone. 
Hence, we obtain \eqref{eq:I.ImixControl} with $\eta=1/2$ and 
$\sigma = 1$ if $\alpha = \beta  \in [1,2)$ or $\sigma = 1/(\alpha{-}1)$ for 
$\alpha = \beta \geq 2$, 
without any restriction on the self-similarity profile $\bfU$.  
Using a version of Gr\"onwall's inequality (see Lemma \ref{le:Gronwall}), we find
for $ \alpha = \beta \in [1,3)$ the decay
$\calE_\rmB(\bfu(\tau) \,|\,\bfU) \leq C \ee^{- \min \{\eta,\sigma \}\tau} 
 =  C \ee^{-\tau/2}$,  whereas for
$ \alpha = \beta > 3$ we have a slower decay like $\ee^{-\tau/(\alpha-1)}$.

For $ \alpha > \beta \geq 1$ it is more difficult to control $\Imix$ and we
need to exploit the term $ \frac12 \,\calE_\rmB$ with the bonus factor. From
\cite{MieSch21?ESPS} we know that for small $|A_+{-}A_-|$ also
$\|\Lambda\|_\infty$ is small. Thus, for sufficiently small $|A_+{-}A_-|$, we
have
\[
\theta := (\alpha {-}\beta) \sup\bigset{\Lambda(y)/V(y) }{ y \in \R} <1/2,
\]
and Theorem \ref{thm:generalCase} shows that \eqref{eq:I.ImixControl} holds for
$\eta=1/2-\theta>0$ and suitable $K$ and $\sigma$. It remains open whether
in the case $ \alpha > \beta $ the asymptotic profiles with large difference
$|A_+{-}A_-|$ are stable or not. \AAA We remark that a flatness condition for the
profile $\bfU$ (which is encoded in $\|\Lambda\|_\infty \leq C |A_+{-}A_-|$)
appears also in \cite[Thm.\,1.1]{MarPan01DPSC}.  

We expect that our approach based on energy-dissipation estimates is flexible
enough to allow for several generalizations. Based on the vector-valued
existence results for similarity profiles in \cite{MieSch21?ESPS}, it should be
possible to treat general reaction systems for $i_*$ species interacting via
$r_*$ reaction pairs with mass-action kinetics, where $m_*:=i_*-r_*\geq 1$ provides
the dimension of the equilibrium manifold which will then include the
similarity profile $\bfU$. Of course, the problem of controlling the mixed term
$\Imix$ will be more involved, because $\Imix$ now involves $m_*$ Lagrange multipliers. 
Moreover, our convergence theory works equally well for space
dimension $d \geq 2$: as soon as the existence of similarity profiles is
established, the energy-dissipation estimates can be done with a bonus factor~$d/2$.  \EEE

\section{Convergence to self-similarity for the linear diffusion equation 
on the whole space}
\label{se:LinDiffEq}

In this section, we demonstrate our proceeding to the well-studied linear
diffusion equation
\begin{equation}
	\label{eq:LinDiff}
\tilde{u}_t = D \, \tilde \Delta \tilde{u} \; \text{ on } \; \R^d,
\end{equation}
with diffusion constant $D >0$.  For initial data $\tilde{u}^0 \in L^1(\R^d)$, it
is already known that the solutions behave asymptotically like a Gaussian, see
e.g. \cite[Sec.\,2.4]{Jung16EMDP}. In this paper, we are interested in the
long-time behavior of solutions which have nontrivial boundary conditions for
$|x|\to \infty$, such that the solutions have infinite mass. 

In the one-dimensional case, we consider the
diffusion equation \eqref{eq:LinDiff} together with the boundary conditions 
\[
\tilde u(t,\pm \infty):=\lim_{x\to \pm \infty}\tilde u(t,x) = A_\pm 
\]
and ask how the solution mixes these two steady states
$A_\pm$ when time $t$ goes to $\infty$.
Because of the linearity, it is not difficult to prove that for every given
pair $(A_-,A_+) \in \R^2$ of asymptotic boundary conditions, the
solution converges uniformly in $x \in \R$ to the following self-similar
solution
\begin{align}
\label{eq:LinDiff.profile}
U(x/ \sqrt{t}) := \frac12 (A_+ {-} A_-) \; \mathrm{erf}\big( x / \sqrt{4Dt} \big) 
+ \frac12 (A_+ {+} A_-),
\end{align}
where $\mathrm{erf}(x):= \frac{2}{\sqrt{\pi}} \int_0^x \exp(-z^2) \dd z$ is the
error function.  However, we do not want to use the linearity to 
verify this convergence, neither the exact representation of the profile given
by the error function, with the idea in mind to generalize the following
strategy to the given nonlinear reaction-diffusion system.  Hence, we will use
entropy estimates to prove this convergence.

Before doing so, we look at the profile function \eqref{eq:LinDiff.profile}
from a different perspective. We see that $U$ depends on the quotient
$x/ \sqrt{t}$ instead of the variables $t$ and $x$ separately. This motivates
to do a transformation into the so-called parabolic scaling variables
given by $y= (1{+}t)^{-1/2} x \in \R^d$ and $\tau=\log(1{+}t)$.  Returning to the
multi-dimensional case, we define
\[
u(\tau,y):=\tilde{u}(t,x) = \tilde{u}\big(\ee^\tau {-}1, \,\ee^{\tau/2} y\big)
\]
and find the scaled diffusion equation with the same asymptotic boundary conditions:
\begin{equation}
	\label{eq:transLinDiff}
u_\tau = D \, \Delta u +  \frac{1}{2} y\cdot \nabla  u  \quad \text{and} \quad
u(\tau,y)-U(y) \to 0 \text{ for }|y|\to \infty, 
\end{equation}
where $\Delta$ and $\nabla$ are now taken with respect to $y \in \R^d$. The
asymptotic boundary conditions are given by a fixed function $U:\R^d \to \R$,
which we take as a self-similar profile, i.e.\ it satisfies the profile equation 
\begin{equation} 
	\label{eq:LinDiff.steady-state}
D \,\Delta  U + \frac{1}{2} y \cdot \nabla U =0 \ \text{ on } \R^d .
\end{equation}
Clearly, the solutions in \eqref{eq:LinDiff.profile} provide all possible
solutions for the case $d=1$. For $d \geq 2$ the set of solutions is much
richer, even when restricting to the case $U\in \rmC^2(\R^d)$ with $\inf U \geq
\underline U>0$.  Of course, we again see that $\tilde u(t,x)=
U\big((1{+}t)^{-1/2}x\big)$ is an exact self-similar solution of the unscaled
equation \eqref{eq:LinDiff}. 

To prepare for the subsequent analysis for reaction-diffusion system, we
now show convergence of all solutions of the scaled linear diffusion
equation in the sense that the relative entropy
\begin{align*}
\calE_\phi(u \vert U) &:= \int_\R \phi(u / U)\, U \dd y = \int_\R  \phi(\rho)
U \dd y  , \quad \text{where } \rho=u/U, 
\end{align*}
converges exponentially to $0$. Here $\phi$ is an arbitrary convex entropy
function fulfilling $\phi(\rho)\geq \phi(1) = 0$ for all $\rho\geq
0$. We call the arising exponential decay rate $d/2$ the bonus factor, because
it solely comes from the scaling, i.e.\ from the drift term $\frac12 y\cdot \nabla
u$. 

\begin{proposition}[Decay in the linear diffusion equation]
\label{pr:LinDiffEqn}
Consider the scaled linear diffusion equation and let $U \in \rmC^2_\rmb(\R^d)$ be
the similarity profile satisfying \eqref{eq:LinDiff.steady-state} and $U(y)\geq
\underline U>0$.
Then, all solutions $u$ of the Cauchy problem \eqref{eq:transLinDiff} fulfilling
$\calE_\phi(u^0 \vert U) = \int_{\R^d} \phi(u^0 / U) U \dd y < \infty$
converge to $U$ in the sense that 
\begin{align*}
\calE_\phi (u(\tau) \vert U) 
\leq \ee^{-d\tau/2} \,\calE_\phi (u^0 \vert U) \ \text{ for all } \tau >0.
\end{align*}
\end{proposition}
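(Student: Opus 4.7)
The plan is to differentiate the relative entropy along solutions and derive the pointwise-in-$\tau$ identity
\[
\frac{\rmd}{\rmd\tau}\calE_\phi(u(\tau)|U) \;=\; -\,\IFisher(u(\tau)) \;-\; \frac{d}{2}\calE_\phi(u(\tau)|U),
\]
with $\IFisher(u):= D\int_{\R^d} \phi''(u/U)\,|\nabla(u/U)|^2\,U\,\rmd y \geq 0$, after which Gr\"onwall's lemma immediately yields the claimed decay. The calculation naturally splits in three moves.

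First I would change variables to the ratio $\rho:=u/U$ and, using both \eqref{eq:transLinDiff} and the profile equation \eqref{eq:LinDiff.steady-state}, derive the transport-diffusion equation
\[
\rho_\tau \;=\; D\Delta\rho \;+\; \Bigl(\tfrac{2D\,\nabla U}{U}+\tfrac{y}{2}\Bigr)\!\cdot\!\nabla\rho,
\]
so that the stationarity of $U$ absorbs the zeroth-order contributions. Then I would write $\frac{\rmd}{\rmd\tau}\calE_\phi = \int \phi'(\rho)\rho_\tau\,U\,\rmd y$ and integrate by parts the Laplacian: using $\nabla(\phi'(\rho)U)=\phi''(\rho)U\nabla\rho+\phi'(\rho)\nabla U$, the diffusion contribution produces $-\IFisher(u)$ plus the cross term $-D\int\phi'(\rho)\nabla\rho\cdot\nabla U\,\rmd y$. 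This cross term combines with one copy of $D\nabla U$ from the drift, leaving exactly
\[
\frac{\rmd}{\rmd\tau}\calE_\phi \;=\; -\IFisher(u) \;+\; \int_{\R^d}\!\nabla\phi(\rho)\cdot\Bigl(D\nabla U+\tfrac{1}{2}\,y\,U\Bigr)\rmd y.
\]

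The final move is a second integration by parts on this remainder. The identity $\mathrm{div}\bigl(\tfrac12 yU\bigr)=\tfrac12 y\cdot\nabla U+\tfrac{d}{2}U$, combined with the profile equation $D\Delta U+\tfrac12 y\cdot\nabla U=0$, reduces $\mathrm{div}(D\nabla U+\tfrac12 yU)$ to $\tfrac{d}{2}U$, and therefore the remainder equals $-\tfrac{d}{2}\calE_\phi(u|U)$. Assembling the two identities gives the target, and Gr\"onwall's inequality together with $\IFisher\geq 0$ (which follows from convexity of $\phi$) finishes the proof. I expect the main technical obstacle to be rigorously justifying the two integrations by parts on all of $\R^d$: $\rho$ need not decay at infinity and only $U\geq \underline U>0$ with $U\in \rmC^2_{\rmb}$ is given. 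I would handle this by a standard cutoff $\chi_R$ supported in $\{|y|\leq 2R\}$ and let $R\to\infty$, using finiteness of $\calE_\phi(u^0|U)$ together with the boundedness of $U$ and $\nabla U$ and with parabolic regularity for the linear equation to control boundary fluxes; a parallel argument on a truncated domain, combined with monotone convergence of $\calE_\phi$, also suffices if one prefers to approximate $u^0$ by compactly supported perturbations of $U$.
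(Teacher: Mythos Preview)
Your proof is correct and follows essentially the same route as the paper: pass to the ratio $\rho=u/U$, use the profile equation to remove the zeroth-order terms, integrate by parts once to extract $-\IFisher$ and the remainder $\int \nabla\phi(\rho)\cdot(D\nabla U+\tfrac12 yU)\,\rmd y$, then integrate by parts a second time and use $\DIV(D\nabla U+\tfrac12 yU)=\tfrac d2 U$ to obtain the bonus term $-\tfrac d2\calE_\phi$. The paper justifies the boundary terms by invoking $\rho(y)\to 1$ and $\phi'(1)=0$ directly, while you propose a cutoff argument; either is acceptable at this level of rigor.
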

\begin{proof}
To simplify the calculation we use the relative density 
$\rho(\tau, y) := u(\tau,y)/U(y)$ and observe that the scaled diffusion
equation \eqref{eq:transLinDiff} takes the form 
\begin{align*}
U \rho_\tau &= D \big( U \Delta \rho + 2 \nabla U \cdot \nabla\rho 
	+ \rho \Delta U\big)+ \frac{1}{2} y\cdot \big(U \nabla \rho 
	{+} \rho \nabla U \big)\\
& = D \big( U \Delta\rho 
	+ 2 \nabla U\cdot \nabla\rho\big) + \frac{1}{2} U \,y\cdot\nabla \rho,
\end{align*}
where the last identity follows by inserting the profile equation
\eqref{eq:LinDiff.steady-state} for $U$. 

We compute 
the time derivative of the relative entropy. It holds
\begin{align*}
\frac{\rmd}{\rmd \tau} \calE_\phi (u(\tau) \vert U) 
&=  \int_\Rd \phi'(\rho) \rho_\tau U \dd y 
=  \int_\Rd  \phi'(\rho)\big\lbrace D \big(U \Delta \rho 
	+ 2\nabla U \cdot \nabla  \rho \big)+ \frac{1}{2}U y\cdot\nabla\rho 
  \big\rbrace  \dd y \\
&\overset*=  \int_\Rd \big\lbrace -D \phi''(\rho)|\nabla \rho|^2 U 
     + \big( D\nabla U +\frac12 U y\big) \cdot\big(\phi'(\rho)\nabla \rho\big) 
      \big\rbrace \dd y  
\\ 
&\overset*= \int_\Rd \big\lbrace -D \phi''(\rho)|\nabla \rho|^2 U  - 
 \big( D\Delta U +\frac12\,y\cdot \nabla U + \frac12(\DIV y) U \big) 
  \phi(\rho)\big\rbrace \dd y  
\\
& = - \IFisher(\rho) \  - 0 \ - \frac d2 \int_\Rd
\phi(\rho) U \dd y \ =:\  - \calD_\phi(\rho). 
\end{align*}
Here $\overset*=$ indicates an integration by parts where we use $\rho(y) \to 1$
and $\phi'(1)=0$. For the second last identity we used the Fisher information 
\[
\IFisher (\rho) := D \int_\Rd \phi''(\rho)|\nabla \rho|^2 U \dd y \geq 0 
\]
and the profile equation $D\Delta U+ \frac12\,y\cdot \nabla U=0$ once
again. The bonus factor arises from $\frac12 \DIV y = d/2$.

Thus, the dissipation $\calD_\phi$ is non-negative and
satisfies $\calD_\phi(\rho)\geq \frac d2\:\calE_\phi(u \vert U)$ yielding 
\begin{align*}
\frac{\d}{\d \tau} \calE_\phi (u(\tau) \vert U) 
&=  - \calD_\phi(\rho) \leq -\frac d2\:\calE_\phi(u(\tau) \vert U) .
\end{align*}
By Gr\"onwall's Lemma, we obtain 
exponential convergence in $\tau$. More precisely, we have
\begin{align*}
\calE_\phi  (u(\tau) \vert U) \leq \ee^{- d\tau/2} \calE_\phi  (u(0) \vert U) 
\text{ for } \tau >0,
\end{align*} 
as it was claimed.
\end{proof}

In the above proof, we see the essential benefit of the parabolic scaling. The extra
term $\frac{1}{2} y \cdot \nabla u$ featuring in the scaled diffusion equation
\eqref{eq:transLinDiff} leads to the so-called bonus factor $d/2$ in the
differential inequality for the relative entropy, which in turn provides
convergence and an explicit decay rate.

\section{The reaction-diffusion system}
\label{se:RDS}

The first part of this section is dedicated to introduce the coupled reaction-diffusion 
system \eqref{eq:RDS.vector} and the scaled one \eqref{eq:RDS.vector.scaled} 
together with its important properties in more detail. 
Then in Section \ref{su:Dissipation}, we derive the dissipation functional for the scaled 
system and prove an appropriate splitting of it.

\subsection{The system and its similarity profile}
\label{su:RDSandProfile}

Consider a coupled system of two nonlinear reaction-diffusion 
equations on the unbounded domain $\Omega = \R^1$ which present the concentration 
change of the diffusing species $X_1$ and $X_2$ interacting through the single reversible 
reaction $\alpha X_1 \rightleftharpoons \beta X_2$ with each other. When we denote their 
densities with $\tilde{u},\tilde{v} \geq 0$, respectively, the mass-action law leads to the 
system
\begin{equation} \label{eq:RDS}
\begin{aligned}
\tilde{u}_t &= d_1 \tilde{u}_{xx} + \alpha k (\tilde{v}^\beta {-} \tilde{u}^\alpha), \\
\tilde{v}_t &= d_2 \tilde{v}_{xx} -\beta k (\tilde{v}^\beta {-} \tilde{u}^\alpha),
\end{aligned}
\end{equation}
for $t > 0$ and $x \in  \R^1$, where the diffusion constants $d_1,d_2$ and the 
reaction rate $k$ are assumed to be positive. The set of constant steady states is a 
one-parameter family given by
\[
\bigset{ (A^\beta, A^\alpha) }{ A >0 }.
\]
We are interested in the behavior of solutions where the initial data $(u^0,v^0)$ is in 
equilibria at infinity, i.e. where for two given constants $A_-, A_+ >0$ the continuous 
initial data satisfies the asymptotic boundary conditions
\[
\big(\tilde{u}^0(\pm \infty), \tilde{v}^0(\pm \infty)\big) := \lim_{x\to \infty}
\big(\tilde{u}^0(\pm x), \tilde{v}^0(\pm x) \big)  = \big(A_\pm^\beta,
A_\pm^\alpha \big). 
\]
Motivated by Section \ref{se:LinDiffEq}, we transform the system \eqref{eq:RDS} into
 parabolic scaling coordinates
\[
y=x/\sqrt{t{+}1} \quad \text{and} \quad \tau=\log(t{+}1).
\]
Then the transformed system reads
\begin{equation} \label{eq:transRDS}
\begin{aligned}
u_\tau &= d_1 u_{yy} + \frac{y}{2} u_y+ \ee^\tau \alpha k (v^\beta {-} u^\alpha), \\
v_\tau &= d_2 v_{yy} + \frac{y}{2}v_y - \ee^\tau \beta k (v^\beta {-} u^\alpha). 
\end{aligned}
\end{equation}
Accordingly, the continuous initial data $(u^0,v^0)$ satisfies the asymptotic
boundary conditions
\begin{equation} \label{eq:transRDS.BC}
\big( u^0(\pm \infty), v^0(\pm \infty) \big) = \big(A_\pm^\beta, A_\pm^\alpha \big).
\end{equation}
Note the exponential factor that appears in front of the reaction terms in
\eqref{eq:transRDS} as the reaction does not transform like the parabolic
terms.  At a first glance, one might say that the transformed system looks much
more complicated than the original one since it is now non-autonomous.  On top
of that, the factor is exponentially growing in time, which could impair
convergence.  On further consideration, however, we will see that the prefactor
$\ee^\tau$ is beneficial from a technical point of view and makes things work
in the end.  Luckily, the reaction term comes with a difference; thus, the
prefactor indicates how the solutions probably behave for large times.  To
prove rigorously that this is true is the aim of Sections \ref{se:SpecialCase}
and \ref{se:GeneralCase}. 

But already now we can imagine that the exponentially
growing factor forces the reaction to equilibrate for $\tau\to
\infty$. However, there might still be nontrivial reaction fluxes
$q=k \ee^\tau (v^\beta- u^\alpha)$ for $\tau\to \infty$, which can be seen as
the limit of the type ``$\infty\cdot 0$''. As discussed in
\cite{MieSch21?ESPS, MieSch23?SSPC}, the similarity profile $y\mapsto
\bfU(y)=(U(y),V(y))^\top$ has to satisfy the
following differential-algebraic system
\begin{align}
	\label{eq:SelfSimProfile}\hspace*{-0.5em}
\binom00&=\binom{d_1 U''(y) {+} \frac y2\,U'(y)}{d_2 V''(y) {+} 
 \frac y2\,V'(y)}  {+} \Lambda(y) \binom{\alpha}{\!-\beta\!} , 
 \ \ U(y)^\alpha= V(y)^\beta,  \ \ 
\binom{U(\pm\infty)}{V(\pm\infty)} = \binom{A_\pm^\beta}{A_\pm^\alpha}. 
\end{align}
It is possible to eliminate $\Lambda$ and $V= U^{\alpha/ \beta}$ algebraically
to obtain a nonlinear ODE for $U$ alone, namely
\begin{equation*} 
\big( \beta d_1 U + \alpha d_2 U^{\alpha/{\beta}} \big)'' 
	+ \frac{y}{2}  \big( \beta U + \alpha U^{\alpha/ \beta} \big)' = 0,  
	\quad \text{ with } U(\pm \infty) = A_\pm^\beta. 
\end{equation*}
In \cite{MieSch21?ESPS} it is shown that for all $(A_-,A_+)$ there exists
a unique solution $\bfU$ of \eqref{eq:SelfSimProfile}.

We call the functions $\bfU = (U,V)^\top$ \textit{similarity profiles} and aim to
prove that solutions $\bfu = (u,v)^\top$ to
\eqref{eq:transRDS}--\eqref{eq:transRDS.BC} converge towards the profiles in the
sense that the relative Boltzmann entropy $\EB $ satisfies the qualitative
estimate
\begin{equation}
\label{eq:EntropyConvergence}
\EB (\bfu(\tau)\, \vert \, \bfU ) \leq \tilde C \, \ee^{-\eta \tau} \, 
 \EB ( \bfu(0) \, \vert \, \bfU) + \tilde K\, \ee^{-\sigma \tau}, 
\end{equation}
where the rates $\eta,\ \sigma >0$ and the constants $\tilde C,\ \tilde K$
depend only on the given problem data, but not on the initial condition
$\bfu(0)$. This then implies exponential convergence of
$\EB(\bfu(\tau)\, \vert \, \bfU )$ with exponential rate
$\min\{\eta,\sigma\}>0$.

In \cite[Lem.\,3.2]{MieSch21?ESPS} it is additionally shown that the profiles
$\bfU =(U,V)^\top$ solving \eqref{eq:SelfSimProfile} are monotone, i.e.\ for $A_-<A_+$
one has $U'(y),V'(y)>0$ for all $y \in\R$.  As a consequence, we have
$U(y)> A_-^\beta>0$ and $V(y) > A_-^\alpha>0$ for all $y \in \R$ so that the
relative entropy, where we have the relative densities $\rho=u / U$ and
$\zeta= v / V$ in the argument of the Boltzmann function, is well-defined.

\subsection{Suitable split of dissipation}
\label{su:Dissipation}

Let us recall that the usual procedure is to take a relative entropy $\calE_\phi$ and 
to show that it fulfills for all times the so-called entropy-dissipation relation
\[
\frac{\d}{\d \tau} \calE_\phi (\bfu \, \vert \, \bfU) 
=- \calD_\phi (\bfu) \leq 0
\]
for a \textit{non-negative} dissipation functional $\calD_\phi$. 
In our case, we cannot expect the monotonicity of the
mapping $\tau \mapsto  \calE_\phi (\bfu (\tau) \, \vert \, \bfU) $ as it is
posed on the whole space and $\bfU$ is not a true steady state. This is in
contrast to \cite{DesFel06EDTE, DesFel07EMRD, BLMV14LFBD, Miel17UEDR}, 
where the unscaled system \eqref{eq:RDS} is
studied on bounded domains and where exact steady states exist. 
However, in Sections \ref{se:SpecialCase} and \ref{se:GeneralCase} we will prove
that the entropy-dissipation relation is correct \AAA up to exponentially decaying
terms, see \eqref{eq:I.ImixControl} or Lemma \ref{le:Gronwall}. \EEE 

Let us take the relative Boltzmann entropy
\begin{equation*}
  \EB( \bfu \, \vert \, \bfU) 
  =  \int_\R \big( \LB(\rho) U +  \LB(\zeta) V\big) \dd y \quad 
  \text{ where } \rho := \frac{u}{U} \text{ and }  \zeta := \frac{v}{V},
\end{equation*}
as it goes hand in hand with the mass-action kinetics.

The aim of this section is first to derive the dissipation functional 
$\calD_\rmB$ that fulfills
\begin{equation}
 \label{eq:EntDissLaw}
\frac{\d}{\d \tau}  \EB (\bfu \, \vert \, \bfU)
=:- \calD_{\rmB} (\rho,\zeta),
\end{equation}
and second to find a suitable partition of it in good and problematic terms,
which is useful since $\calD_\rmB$ has -- as we already suspect -- no fixed sign in
our setting.  Thus, we will examine the terms of which it consists in an
appropriate way.  Note that we will write the dissipation terms as functions of
the relative densities $\rho = u/ U$ and $\zeta = v/ V$, whereas we keep the
relative entropy in standard form in terms of $\bfu =(u,v)^{\top}$.

\begin{proposition}
\label{prop:Dissipation}
The dissipation $\calD_{\rmB}$ fulfilling \eqref{eq:EntDissLaw} can be
decomposed as 
\[
\calD_{\rmB} (\rho,\zeta) =   \IFisher (\rho,\zeta) + \frac12\,  \EB(\bfu\, | \,
\bfU) -\Imix (\rho,\zeta)  + \ee^\tau\, \calD_\mathrm{react} (\rho,\zeta),
\]
where
$\calD_\mathrm{react}(\rho,\zeta) := \int_\R k U^\alpha \Gamma(\rho^\alpha,
\zeta^\beta) \dd y \geq 0 $\vspace{0.15em} is the reactive dissipation and
$ \IFisher (\rho, \zeta) := \int_\R d_1 U \LB''(\rho) \rho_y^2 + d_2 V
\LB''(\zeta) \zeta_y^2 \dd y \geq 0$\vspace{0.15em} is known as the Fisher
information. The bonus term $\tfrac12 \EB$ stems from the transport term
$\frac y2 \pl_y \bfu$, and the remaining term
\begin{equation}
  \label{eq:Imix.general}
  \Imix(\rho,\zeta) := 
\int_\R \big(  (1 {-} \rho)\alpha - (1 {-} \zeta)\beta\big) \Lambda \dd y,
\end{equation}
arises because of the Lagrange multiplier $\Lambda$ which features in the
profile equation \eqref{eq:SelfSimProfile}. This term
is called the \emph{mixed term},  because it is the only addend without sign.
\end{proposition}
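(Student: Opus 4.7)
The plan is to differentiate $\EB(\bfu(\tau)\,|\,\bfU)$ in $\tau$ and reorganize the resulting integrals. Since $\bfU$ is $\tau$-independent and $\LB'(z)=\log z$, one has
\[
\tfrac{\rmd}{\rmd\tau}\EB(\bfu\,|\,\bfU) \;=\; \int_\R \log\rho\;u_\tau\dd y \;+\; \int_\R \log\zeta\;v_\tau\dd y,
\]
because $\rho_\tau U=u_\tau$ and $\zeta_\tau V=v_\tau$. Plugging in the scaled system \eqref{eq:transRDS} splits the right-hand side into three blocks: a diffusive block, a drift block carrying the $\tfrac y2\partial_y$ terms, and a reactive block carrying the factor $\ee^\tau$. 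Each block is then matched against one or two of the four summands claimed in the proposition, so the proof reduces to three essentially independent bookkeeping identities.

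For the reactive block, summing the two species contributions gives $\ee^\tau k\int_\R (v^\beta{-}u^\alpha)(\alpha\log\rho-\beta\log\zeta)\dd y$. The algebraic constraint $U^\alpha=V^\beta$ from \eqref{eq:SelfSimProfile} kills the $\log U,\log V$ parts, so $\alpha\log\rho-\beta\log\zeta=\log(u^\alpha/v^\beta)$; by \eqref{eq:defGamma} the integrand then equals $-k\Gamma(u^\alpha,v^\beta)$. The identity $U^\alpha\Gamma(\rho^\alpha,\zeta^\beta)=\Gamma(u^\alpha,v^\beta)$ (which follows at once from $V^\beta\zeta^\beta=v^\beta$) collapses the whole block to $-\ee^\tau\Dreact(\rho,\zeta)$.

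For the diffusive and drift blocks I process the $u$-variable and argue identically for $v$. Integrating $\int\log\rho\,d_1u_{yy}\dd y$ by parts and using $u_y=\rho_yU+\rho U_y$ together with $\LB''(\rho)=1/\rho$ produces the Fisher piece $-d_1\int U\LB''(\rho)\rho_y^2\dd y$ plus a cross term $-d_1\int\rho_yU_y\dd y$. For the drift I exploit the two identities $\log\rho\cdot\rho_y=\LB(\rho)_y$ and $\rho\log\rho=\LB(\rho)+\rho-1$: after one integration by parts the first identity extracts the bonus piece $-\tfrac12\int\LB(\rho)U\dd y$ together with an auxiliary $-\int\tfrac{y}{2}\LB(\rho)U_y\dd y$, while the second identity produces the opposite auxiliary $+\int\tfrac{y}{2}\LB(\rho)U_y\dd y$ together with another cross term $\int\tfrac{y}{2}(\rho-1)U_y\dd y$. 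The two $\LB(\rho)U_y$ terms cancel, and integrating the remaining cross terms by parts once more yields $\int(\rho-1)\bigl(d_1U''+\tfrac{y}{2}U'\bigr)\dd y$, which by the first component of the profile equation \eqref{eq:SelfSimProfile} equals $\int(1-\rho)\alpha\Lambda\dd y$. The identical calculation for $v$ uses the second component of \eqref{eq:SelfSimProfile} with its opposite sign $-\beta\Lambda$, contributing $-\int(1-\zeta)\beta\Lambda\dd y$. Summing, the two mixed pieces are precisely $\Imix$ as in \eqref{eq:Imix.general}.

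The computation itself is almost mechanical; the genuinely delicate point is justifying all of the integrations by parts. Each of them sheds a boundary contribution at $y=\pm\infty$ involving expressions such as $\log\rho\,u_y$, $y\LB(\rho)U$, $(\rho{-}1)U_y$ and their $\zeta,V$ analogues, and these vanish only under sufficient spatial decay of $\bfu(\tau,\cdot)-\bfU$ and of $\partial_y(\bfu-\bfU)$. This decay has to be read off from well-posedness of the Cauchy problem for \eqref{eq:transRDS} together with the monotonicity and uniform positivity of $\bfU$ recorded after \eqref{eq:SelfSimProfile}. A secondary bookkeeping trap is the sign convention in the profile equation ($+\alpha\Lambda$ in the first component versus $-\beta\Lambda$ in the second), which must be tracked consistently so that the two mixed terms combine with exactly the signs displayed in \eqref{eq:Imix.general}.
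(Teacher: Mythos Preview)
Your argument is correct and follows essentially the same route as the paper: compute $\tfrac{\rmd}{\rmd\tau}\EB$, separate reactive and diffusive/drift contributions, use $U^\alpha=V^\beta$ for the reactive block, integrate by parts to extract the Fisher and bonus terms, and invoke the profile equation \eqref{eq:SelfSimProfile} together with the identity $\rho\log\rho=\LB(\rho)+\rho-1$ (equivalently $\LB'(\rho)\rho-\LB(\rho)=\rho-1$) to obtain $\Imix$. The only organizational difference is that the paper writes the evolution in the $(\rho,\zeta)$ variables from the outset, whereas you keep $(u,v)$ on the evolution side and expand $u_y=\rho_yU+\rho U_y$ later; this is purely cosmetic.
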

\begin{proof}
Take the relative Boltzmann entropy $ \EB$ given by the functional
\[
 \EB( \bfu \, \vert \, \bfU) =   \int_\R U  \LB(\rho) 
+ V \LB(\zeta) \dd y.
\] 
The relative densities $\rho(\tau, y) := u(\tau,y)/ U(y)$ and 
$\zeta(\tau, y):= v(\tau,y)/V(y)$ satisfy
\begin{align*}
U \rho_\tau &= d_1 \big( U \rho_{yy} + 2 U' \rho_y 
	+  U'' \rho \big)+ \frac{y}{2} \big(U \rho_y 
	+ U' \rho \big) 
	+ \alpha k\ee^\tau U^{\alpha} (\zeta^\beta{-}\rho^\alpha), \\
V \zeta_\tau &= d_2 \big( V \zeta_{yy} + 2 V' \zeta_y 
	+ V'' \zeta \big)
	+ \frac{y}{2} \big(V \zeta_y +   V' \zeta \big) 
	-  \beta k\ee^\tau V^{\beta} (\zeta^\beta{-}\rho^\alpha).
\end{align*}
Thus, computing the time derivative of the relative entropy yields
\begin{align*}
\frac{\d}{\d \tau}  \EB( \bfu \, \vert \, \bfU)
&=  \int_\R U \LB'(\rho) \rho_\tau 
	+ V \LB'(\zeta) \zeta_\tau  \dd y \\
&=  \int_\R \LB'(\rho)\Big\lbrace d_1 \big(U \rho_{yy} 
	+ 2 U' \rho_y + U''\rho \big)
	+ \frac{y}{2} \big(U \rho_y + U' \rho \big) \Big\rbrace \dd y \\
& \quad + \int_\R \LB'(\zeta)\Big\lbrace d_2 \big( V \zeta_{yy} 
	+ 2 V' \zeta_y +  V'' \zeta \big)
	+ \frac{y}{2} \big(V \zeta_y + V' \zeta \big) \Big\rbrace  \dd y \\
& \quad -  \ee^\tau  \int_\R kU^\alpha \big( \beta \LB'(\zeta) 
	- \alpha \LB'(\rho)  \big) (\zeta^\beta{-}\rho^\alpha) \dd y \\
&=: -\calD_\mathrm{diff}(\rho, \zeta) - \ee^\tau \calD_\mathrm{react}(\rho, \zeta),
\end{align*}
where we used the relation $U^\alpha = V^\beta$ to simplify the reaction terms. 
Let us consider the reactive dissipation first.  We can use the logarithmic identities to 
obtain a sign for $\calD_\mathrm{react}$. It holds
\begin{align*}
\calD_\mathrm{react}(\rho,\zeta) 
&:=  \int_\R k U^\alpha \big( \beta \LB'(\zeta) 
	- \alpha \LB'(\rho) \big) (\zeta^\beta{-}\rho^\alpha) \dd y 
=\int_\R k U^\alpha \Gamma(\rho^\alpha,\zeta^\beta) \dd y \geq 0,
\end{align*}
where $\Gamma$ is defined in \eqref{eq:defGamma}. 
Next, we explore the remaining diffusive dissipation $\calD_\mathrm{diff}$.  
We re-sort and obtain
\begin{align*}
\calD_\mathrm{diff}(\rho, \zeta) 
&=- \int_\R d_1 U \LB'(\rho) \rho_{yy} 
	+ d_2 V \LB'(\zeta) \zeta_{yy} \dd y \\
& \quad -  \int_\R  \LB'(\rho) \rho \big( d_1 U'' 
	+ \frac{y}{2} U' \big) +  \LB'(\zeta) \zeta \big( d_2 V'' 
	+ \frac{y}{2} V' \big) \dd y \\
& \quad-  \int_\R  \LB'(\rho) \rho_y \big( 2 d_1 U' 
	+\frac{y}{2} U  \big) +  \LB'(\zeta) \zeta_y \big( 2 d_2 V' 
	+\frac{y}{2} V \big) \dd y .
\end{align*}
In the same manner as for the scaled diffusion equation, the idea is to integrate by 
parts twice.  For the boundary terms, we use the limits $\rho(y) \to 1$ and 
$\zeta(y) \to 1$ for $y \to \pm \infty$ and the property $\LB'(1) = 0$. 
The first integral addend leads to the Fisher information 
\begin{align*}
\IFisher (\rho,\zeta)
:= \int_\R d_1 U \LB''(\rho) \rho_y^2 
	+ d_2 V \LB''(\zeta) \zeta_y^2 \dd y \geq 0.
\end{align*}
Hence, we obtain
\begin{align*}
 \calD_\mathrm{diff}(\rho, \zeta) 
&= \IFisher(\rho,\zeta)  - \int_\R  \LB'(\rho) \rho 
	\big( d_1 U'' 
	+ \frac{y}{2} U' \big) +  \LB'(\zeta) \zeta \big( d_2 V'' 
	+ \frac{y}{2} V' \big) \dd y \\
& \quad -  \int_\R  \LB'(\rho) \rho_y \big( (2 d_1{-}d_1) U' 
	+\frac{y}{2} U  \big) 
	+  \LB'(\zeta) \zeta_y \big( (2 d_2{-}d_2) V' +\frac{y}{2} V \big).
\end{align*}
In the last line, we see the total derivatives of $\LB(\rho)$ and 
$\LB(\zeta)$, respectively. Thus, integration by parts of these integral 
terms yields the factors $d_1 U''+\frac y2U' + \frac12 U= -\alpha \Lambda
+ \frac12 U$ and $d_2 V''+\frac y2 V' + \frac12 V= \beta \Lambda + \frac12 V $,
respectively, where we used the profile equation
\eqref{eq:SelfSimProfile}. Using additionally  $ \LB'(\rho) \rho {-} \LB(\rho)
  = \rho{-} 1$ we arrive at  
\begin{align*}
\calD_\mathrm{diff} (\rho, \zeta) &=  \IFisher(\rho, \zeta) 
	 + \frac12 \, \EB( \bfu \, \vert \, \bfU)- 
       \int_\R \big( (1 {-} \rho) \alpha \Lambda -(1 {-} \zeta) \beta \Lambda \big) \dd y \\
&=  \IFisher(\rho, \zeta)  + \frac12\,  \EB( \bfu \, \vert \, \bfU)
	 - \Imix(\rho,\zeta),
\end{align*}
which verifies the desired decomposition.
\end{proof}

In the proof of Proposition \ref{prop:Dissipation}, we saw that due to the
mass-action kinetics, the relative Boltzmann function $\phi = \LB$ is the
only choice for the given reaction-diffusion system if $\alpha \neq \beta$ in order
to obtain a sign for the reactive dissipation $\Dreact$.
However, if $\alpha = \beta$, also other entropy functions can be chosen.
A common family of entropy functions is given by
\begin{align}
\label{eq:F.p}
F_p(z) := {\footnotesize
   \begin{cases} 
	\frac{1}{p (p{-}1)} \big( z^p - p z+ p -1\big) 
		&\text{for } p \in \mathbb{R} \setminus \{0,1 \},\\
	z \log z -z+1 &\text{for } p=1, \\
	z-\log z-1  &\text{for } p =0,
   \end{cases}}
\end{align}
which is determined by the conditions $F_p''(z) = z^{p{-}2}$ and
$F_p(1) = F_p'(1)=0$. Further, it satisfies the following lower bounds:
\begin{subequations}
\begin{align}
  \label{eq:Fp.equiv}
&\text{For all } p \in (0,1) \text{ and } z >0: \quad 
&&F_p(z) \geq \frac1p F_1(z)= \frac1p \LB(z),\\
\label{eq:Fp.bound}
&\text{for all } p >0 \text{ and } z >0: \quad 
&&F_p(z) \geq \frac{1/2} {\max\{p,1{-}p\}}\, F_{1/2}(z),
\end{align}
\end{subequations}
see \cite[Eqn.\,(3.2)]{MieMit18CEER}.  In fact, using this family of
entropies in the case $\alpha = \beta$ leads to improved estimates as we will
see in Section \ref{su:al=be.GenEntropy}.  But also in Section
\ref{se:GeneralCase}, where for $\alpha \neq \beta$ the convergence of the
relative Boltzmann entropy is studied, the family of entropy functions
\eqref{eq:F.p} is used, but in this case only for technical reasons during the
estimates.

We can define the relative entropy associated to the function $F_p$ by
\[
\calE_p (\bfu (\tau)\, \vert \, \bfU) := \int_\R U(y) F_p(\rho(y)) + V(y)
F_p(\zeta(y)) \dd y  
\quad \text{where } \rho := u/U \text{ and } \zeta := v/V,
\]
such that $\calE_1=\EB$.  The entropy $\calE_{1/2}$ is special
because $F_{1/2}(u/U)U=2 \big( \sqrt{u}-\sqrt U\big)^2$. Hence we have
\[
\int_\R F_{1/2}(u/U)U\dd y = 2 \big\|\sqrt u - \sqrt U\|_{\rmL^2}^2 =:2
\HELL(u,U)^2,
\]
where $\HELL$ denotes the Hellinger distance between two (densities of)
non-negative measures. Using \eqref{eq:Fp.equiv} we see that the Hellinger
distance between $\bfu=(u,v)^\top$ and $\bfU=(U,V)^\top$ can be controlled by
 $\calE_p$ for all $p>0$. Indeed we have
\begin{align}
  \label{eq:Helli.calEp}
  \HELL(u,U)^2+\HELL(v,V)^2 &=\frac12
  \calE_{1/2}(\bfu\,|\,\bfU) \leq \max\{p,1{-}p\}  \calE_p(\bfu\,|\,\bfU).
\end{align}
\EEE

As last part of this section, we will derive the corresponding dissipation functional 
$\calD_p$ which fulfills
\[
\frac{\rmd}{ \rmd \tau} \calE_p (\bfu (\tau)\, \vert \, \bfU)  = - \calD_p(\rho,\zeta)
\]
and clarify the terms of which it consists. Notice that the following is only true
if the stoichiometric coefficients coincide.
\begin{proposition}
\label{prop:Diss.gen.Entr}
Let $\alpha = \beta$ and $p \not\in \{0,1\}$. The dissipation functional $\calD_{p}$ 
fulfilling the above can be written as
\[
\calD_{p} (\rho,\zeta) =   \IpFisher (\rho,\zeta) + \frac12\,  \calE_p(\bfu\, | \,
\bfU) -\Ipmix (\rho,\zeta)  + \ee^\tau \Dpreact (\rho,\zeta),
\]
where $\Dpreact (\rho,\zeta) 
:= \int_\R k  U^\alpha  \frac{\alpha}{p{-}1}(\zeta^{p{-}1} {-} \rho^{p{-}1})
( \zeta^\alpha {-} \rho^\alpha) \dd y \geq 0$ is the reactive dissipation, 
the Fisher information takes the form $ \IpFisher (\rho, \zeta)
 :=  \int_\R d_1 U \rho^{p{-}2} \rho_y^2 
	+ d_2 V \zeta^{p{-}2} \zeta_y^2 \dd y \geq 0$, and the mixed term, given by
\[
 \Ipmix(\rho,\zeta) := 
\int_\R \frac1p \big(  \zeta^p {-} \rho^p \big)\, \alpha \Lambda \dd y
\]
is again the only addend without sign.
\end{proposition}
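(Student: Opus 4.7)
The plan is to mimic the proof of Proposition \ref{prop:Dissipation} with $\LB$ systematically replaced by $F_p$, exploiting $F_p''(z) = z^{p-2}$ and the identity $F_p'(z)\,z - F_p(z) = (z^p{-}1)/p$ (which replaces $\LB'(z)z-\LB(z)=z{-}1$). I would first compute
\[
\tfrac{\rmd}{\rmd\tau}\calE_p(\bfu|\bfU) = \int_\R \big(U F_p'(\rho)\rho_\tau + V F_p'(\zeta)\zeta_\tau\big)\dd y,
\]
and insert the evolution equations for $\rho_\tau,\zeta_\tau$ from the proof of Proposition \ref{prop:Dissipation} (they depend on the PDE, not on $\phi$). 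This splits the result into a diffusive piece $\calD_{p,\mathrm{diff}}$ and a reactive piece $\ee^\tau\Dpreact$.

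For the reactive part, $F_p'(z) = (z^{p-1}{-}1)/(p-1)$ gives
\[
\beta F_p'(\zeta) - \alpha F_p'(\rho) \stackrel{\alpha=\beta}{=} \tfrac{\alpha}{p-1}\big(\zeta^{p-1}-\rho^{p-1}\big),
\]
so the factorization $\alpha=\beta$ yields exactly the claimed $\Dpreact$. Non-negativity follows because the two factors $(\zeta^{p-1}-\rho^{p-1})/(p{-}1)$ and $(\zeta^\alpha-\rho^\alpha)$ have the same sign for all $p\neq 1$, $\alpha>0$. The essential point is that, unlike in Proposition \ref{prop:Dissipation}, positivity of $\Dpreact$ now needs $\alpha=\beta$; otherwise the expression does not factor into two monotone differences.

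For the diffusive part, I would carry out the same two integrations by parts as for $\calD_\mathrm{diff}$. The first IBP produces the Fisher contribution $\int d_1 U F_p''(\rho)\rho_y^2 = \int d_1 U \rho^{p-2}\rho_y^2$ (and similarly for $\zeta$), which gives $\IpFisher$; boundary terms vanish since $\rho,\zeta\to 1$ and $F_p'(1)=0$. The second IBP uses $(F_p(\rho))_y=F_p'(\rho)\rho_y$ together with the profile equation \eqref{eq:SelfSimProfile}, i.e.\ $d_1 U''+\frac{y}{2}U'=-\alpha\Lambda$ and $d_2V''+\frac{y}{2}V'=\beta\Lambda$, producing extra terms $(-\alpha\Lambda+\tfrac{1}{2}U)$ and $(\beta\Lambda+\tfrac{1}{2}V)$ respectively. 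Collecting these contributions with the identity $F_p'(z)z - F_p(z)=(z^p{-}1)/p$ yields
\[
\calD_{p,\mathrm{diff}} = \IpFisher + \tfrac{1}{2}\calE_p(\bfu|\bfU) + \tfrac{1}{p}\int_\R\big[(\rho^p{-}1)\alpha-(\zeta^p{-}1)\beta\big]\Lambda\dd y.
\]
Using $\alpha=\beta$, the last integral simplifies to $\tfrac{\alpha}{p}\int_\R(\rho^p-\zeta^p)\Lambda\dd y = -\Ipmix$, completing the claimed decomposition.

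The routine obstacle is bookkeeping signs through the two integrations by parts; the only genuine conceptual point is that $\alpha=\beta$ is forced by the reactive-dissipation step, whereas the diffusive calculation itself would go through for arbitrary stoichiometric coefficients.
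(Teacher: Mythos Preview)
Your proposal is correct and follows essentially the same route as the paper's own proof: replace $\LB$ by $F_p$ in the computation of Proposition \ref{prop:Dissipation}, use $F_p'(z)=(z^{p-1}{-}1)/(p{-}1)$ to obtain $\Dpreact$ (with non-negativity coming from the matched monotonicity of $z\mapsto z^{p-1}$ and $z\mapsto z^\alpha$ after dividing by $p{-}1$), and use $F_p'(z)z-F_p(z)=(z^p{-}1)/p$ together with the profile equation in the second integration by parts to extract $\tfrac12\calE_p$ and the mixed term. Your remark that $\alpha=\beta$ is genuinely needed only for the sign of $\Dpreact$, while the diffusive bookkeeping goes through in general, is also in line with the paper's presentation.
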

\begin{proof}
Following the steps of the proof of Proposition \ref{prop:Dissipation}
and using $\alpha =\beta$ yields
\[
\frac{\d}{\d \tau}  \calE_p( \bfu \, \vert \, \bfU)
= - \calD_{p,\rm{diff}}(\rho,\zeta)  - \ee^\tau \Dpreact(\rho,\zeta) ,
\]
where the reactive dissipation takes the form
\begin{align*}
\Dpreact (\rho,\zeta) 
&=  \int_\R k U^\alpha \alpha \big( F_p'(\zeta) 
	-  F_p'(\rho) \big) (\zeta^\alpha{-}\rho^\alpha) \dd y  \\
&= \int_\R k  U^\alpha  \frac{\alpha}{p{-}1}(\zeta^{p{-}1} {-} \rho^{p{-}1})
( \zeta^\alpha {-} \rho^\alpha) \dd y \geq 0,
\end{align*}
and where for the diffusive part a similar integration by parts gives
\begin{align*}
 \calD_{p,\rm{diff}} &(\rho,\zeta) =  \IpFisher (\rho,\zeta) 
 + \frac12 \calE_p( \bfu \, \vert \, \bfU) \\
 & \hspace*{1.3cm} - \int_\R   \big(F_p'(\rho) \rho - F_p(\rho) \big)
	\big( d_1 U'' + \frac{y}{2} U' \big) +  \big(F_p'(\zeta) \zeta - F_p(\zeta)\big)
	 \big( d_2 V'' + \frac{y}{2} V' \big) \dd y \\
&=  \IpFisher (\rho,\zeta) 
 + \frac12 \calE_p( \bfu \, \vert \, \bfU) 
 - \int_\R  \Big( \big(F_p'(\zeta) \zeta - F_p(\zeta)\big)
	 - \big(F_p'(\rho) \rho - F_p(\rho) \big) \Big) \alpha \Lambda \dd y,
\end{align*}
since $\alpha = \beta$. Further, with
$ F_p'(\rho) \rho - F_p(\rho) = \frac1p (\rho^p{-1})$, this leads to
\begin{align*}
 \calD_{p,\rm{diff}}(\rho,\zeta) &=  \IpFisher (\rho,\zeta) 
 + \frac12 \calE_p( \bfu \, \vert \, \bfU) 
  - \int_\R \frac1p (\zeta^p{-}\rho^p) \alpha \Lambda \dd y.
\end{align*}
\end{proof}

\subsection{Decay estimates}
\label{su:DecayEstimates}
 
After deriving the entropy-dissipation relation \eqref{eq:EntDissLaw}, the
\EEE next step requires to find a so-called entropy-dissipation estimate, that
is an estimate of the form $\calD_\phi (\bfu) \geq \Psi (\calE_\phi(\bfu))$ for
a non-negative function $\Psi$. Under appropriate assumptions on $\Psi$, this
usually gives exponential convergence to the equilibrium, where for specific
$\Psi$ the rate can be estimated explicitly.  For instance, if one even obtains
the inequality $\calD_\phi (\bfu) \geq \eta \, \calE_\phi(\bfu)$ for a positive
constant $\eta$, one can easily see that $\eta$ is exactly the desired rate by
using Gr\"onwall's inequality.  Since the dissipation functional $\calD_\phi$
from Proposition \ref{prop:Dissipation} has no fixed sign due to the mixed
term, these bounds above can scarcely be expected for the given problem. But in
fact, the nonnegativity for \textit{all} times is not a necessary assumption to
obtain convergence.  If a dissipation functional without sign can be estimated
by
\begin{equation}
\label{eq:EntDissIneq}
\calD_\phi(\bfu) \geq  \eta \,   \calE_\phi (\bfu) - K \ee^{-\gamma \tau},
\end{equation}
for example, with $\gamma >0$ and $K\geq 0$, then this will still yield
convergence with a rate that is the minimum of $\eta$ and $\gamma$ (see Lemma
\ref{le:Gronwall} below for the precise statement). The non-negative function
$\tau \mapsto K \ee^{-\gamma \tau}$ can be interpreted then as an upper bound
for the relative entropy for not being a true Lyapunov function.  Since the
function decays exponentially in time, this error is well-behaved when time is
large enough.  We will see later that in some cases, namely for the
stoichiometric coefficients fulfilling $\alpha,\beta \geq 2$, the inequality
\eqref{eq:EntDissIneq} is exactly what we will prove for the given dissipation
$\calD_\rmB$ from Proposition \ref{prop:Dissipation}. In the other case for
$\alpha,\beta \in [1,2)$, we need the following more general statement.


\begin{lemma} \label{le:Gronwall}
Let $E:[0,\infty) \to \R \cup \{ \infty \}$ be a function satisfying $E(0) < \infty$ and
the ordinary differential inequality 
\[
\frac{\d}{\d \tau}E (\tau) \leq - (\eta {-} \mu \ee^{-\tau}) \,  E (\tau) 
	+ K \ee^{-\gamma \tau}
\]
for $\eta,\gamma >0$ and $K,\mu \geq 0$. Then we have $E(\tau) < \infty$ 
for all $\tau >0$ and
\[
E (\tau) \leq \ee^{-\eta \tau +\mu} \big( E(0)+ R(\tau) \big) \quad \text{ with } \quad
R(\tau) :=  K  \int_{0}^\tau \, \ee^{(\eta {-} \gamma)s } \dd s .
\]
Calculating the function $R$ gives 
$E(\tau) \leq \ee^{- \min \{\eta,\gamma \}\tau +  \mu} 
\big( E(0)+ 2K \vert \eta - \gamma \vert^{-1} \big)$ if $\eta \neq \gamma$ and
$E(\tau) \leq \ee^{- \eta \tau + \mu} \big( E(0)+ K \tau\big)$ in the case $\eta = \gamma$.
\end{lemma}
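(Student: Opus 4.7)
The strategy is the classical integrating-factor argument, adapted to the non-autonomous coefficient $\eta - \mu\ee^{-\tau}$. First, set
\[
A(\tau) := \int_0^\tau \big(\eta - \mu \ee^{-s}\big)\dd s = \eta\tau + \mu\big(\ee^{-\tau}-1\big),
\]
so that $\ee^{A(\tau)}$ is an integrating factor for the linear part of the inequality. Multiplying the given differential inequality by $\ee^{A(\tau)}>0$ rewrites it as
\[
\frac{\dd}{\dd\tau}\big(\ee^{A(\tau)}E(\tau)\big) \leq K\,\ee^{A(\tau)-\gamma\tau}.
\]
Integrating from $0$ to $\tau$ gives $\ee^{A(\tau)}E(\tau) \leq E(0) + K\int_0^\tau \ee^{A(s)-\gamma s}\dd s$.

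Since $\mu\geq 0$ and $\ee^{-s}\leq 1$ for $s\geq 0$, one has $\mu(\ee^{-s}-1)\leq 0$, hence $\ee^{A(s)-\gamma s}\leq \ee^{(\eta-\gamma)s}$, so the integral on the right is bounded by $R(\tau)$. Solving for $E(\tau)$ and using $-A(\tau) = -\eta\tau - \mu(\ee^{-\tau}-1) \leq -\eta\tau + \mu$ for $\tau \geq 0$, one arrives at the claimed estimate
\[
E(\tau) \leq \ee^{-\eta\tau+\mu}\big(E(0)+R(\tau)\big).
\]
Finiteness of $E(\tau)$ for all $\tau>0$ is automatic since the right-hand side is finite whenever $E(0)<\infty$.

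For the explicit form, a direct computation yields $R(\tau)=K\tau$ when $\eta=\gamma$ (giving the second claim), and $R(\tau)=K(\ee^{(\eta-\gamma)\tau}-1)/(\eta-\gamma)$ when $\eta\neq\gamma$. A short case distinction on the sign of $\eta-\gamma$, together with $\ee^{-\eta\tau}\ee^{(\eta-\gamma)\tau}=\ee^{-\gamma\tau}$, shows that in both subcases $\ee^{-\eta\tau}R(\tau) \leq K\ee^{-\min\{\eta,\gamma\}\tau}/|\eta-\gamma|$, and the coarser factor $2K|\eta-\gamma|^{-1}$ absorbs the two subcases uniformly into a single statement.

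The argument is essentially a routine Gr\"onwall computation and there is no real obstacle; the only minor point requiring care is the interpretation of the differential inequality in an absolutely continuous (or almost-everywhere) sense, under which the integrating-factor manipulation propagates finiteness from $E(0)<\infty$ to all later times and the estimate follows without further hypotheses.
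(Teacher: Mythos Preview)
Your proof is correct and follows essentially the same approach as the paper: both use the integrating factor $\ee^{A(\tau)}$ with $A(\tau)=\int_0^\tau(\eta-\mu\ee^{-s})\dd s$ (the paper phrases this as ``the differential version of Gr\"onwall's lemma''), then estimate $-\int_s^\tau a(r)\dd r \leq \mu - \eta(\tau{-}s)$ to obtain the bound with $R(\tau)$, and finally compute $R$ explicitly in the two cases. Your presentation is slightly more explicit about the integrating-factor step and the final case distinction, but there is no substantive difference.
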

The proof of this lemma can be found in the appendix.  We see that this weaker
version of an entropy-dissipation estimate is enough to obtain the desired
convergence \eqref{eq:EntropyConvergence}. In the following sections this
differential inequality is exactly what we want to derive for the relative
entropy as a function of time.

\section{Convergence for the special case $\alpha=\beta$}
\label{se:SpecialCase}

Let us begin with considering the special case $\alpha=\beta \geq 1$. That is, we study the 
solutions $\bfu = (u,v)^\top$ of the reaction-diffusion system
\begin{equation} \label{eq:RDS.specialCase}
\begin{aligned}
u_\tau &= d_1 u_{yy} + \frac{y}{2} u_y+ \ee^\tau \alpha k (v^\alpha {-} u^\alpha), \\
v_\tau &= d_2 v_{yy} + \frac{y}{2}v_y - \ee^\tau \alpha k (v^\alpha {-} u^\alpha)
\end{aligned}
\end{equation}
together with continuous initial data $\bfu^0 =(u(0),v(0))^\top$ fulfilling the asymptotic 
boundary conditions
\begin{equation} \label{eq:BC.specialCase}
\bfu^0(\pm \infty) = (A_\pm^\alpha, A_\pm^\alpha)^\top.
\end{equation}
In this special case,  the mixed term $\Imix $ from \eqref{eq:Imix.general}
simplifies significantly, namely
\[
\Imix(\rho,\zeta) = \int_\R \big(\zeta-\rho\big) \,\alpha \,\Lambda \dd y. 
\]
The main point is that $\Dreact$ is able to control $\zeta{-}\rho$ through the
term $\Gamma(\rho^\alpha,\zeta^\alpha)$, which will be part of 
Section \ref{su:al=be.Boltz}, where we focus on the Boltzmann entropy.
Afterwards, in Section \ref{su:al=be.GenEntropy}, we will allow more general 
entropy functions and aim to control $\Ipmix$ with $\Dpreact$
from Proposition \ref{prop:Diss.gen.Entr} in a similar way.

Another much less important point is that the profile equation
\eqref{eq:SelfSimProfile} simplifies also significantly, such that $U,\ V$, and
$\Lambda$ can be solved explicitly. Indeed,  by inserting $\alpha=\beta$
one can see that the profile $\bfU =(U,V)^\top$ is characterized by 
solving the linear ODE 
\begin{equation} 
  \label{eq:Profile.specialCase}
\begin{aligned}
& \frac{d_1{+}d_2}{2} \; U''(y) + \frac{y}{2} U'(y)  = 0 \quad \text{with } \ 
 U(\pm \infty) = A_\pm^\alpha ,\\
& V(y)=U(y)\quad \text{and} \quad  \Lambda = \frac{d_2{-}d_1 } {2\alpha} \: U''(y). 
\end{aligned}
\end{equation}
This means that $U$ is of error-function type like the profile
\eqref{eq:LinDiff.profile} for the linear diffusion equation, but with respect
to the average of the diffusivities $(d_1{+}d_2)/2$. However, we do not need
this outcome in the following calculations; thus, we do not close the door for
further generalizations as this is not true if $\alpha \neq\beta$.  The aim of
this section is twofold: in Section \ref{su:al=be.Boltz} we show
exponential convergence for the case of the relative Boltzmann entropy $\EB$,
and in Section \ref{su:al=be.GenEntropy} we show that in this case estimates
with different relative entropies are possible and even more advantageous.   
In both cases, the result is obtained by proving a suitable bound for the
dissipation functional, like inequality \eqref{eq:EntDissIneq}.

\subsection{The case $\alpha=\beta\geq 1$ with Boltzmann entropy}
\label{su:al=be.Boltz}

Here we restrict to the case that $\phi$ is given by the Boltzmann
function $\LB(z) = z \log z - z +1$, which is intrinsically linked to reaction
diffusion systems, see e.g.\ \cite{DesFel06EDTE, Miel11GSRD} and the recent
justification via Large Deviation principles in \, \cite{MiPeRe14RGFL,
  MPPR17NETP, Mitt18EMQC}.

Our convergence result reads as follows.

\begin{theorem}[Convergence for $\alpha=\beta \geq 1$ with Boltzmann entropy]
\label{thm:specialCase}
Consider the relative Boltzmann entropy
$E(\tau) := \EB ( \bfu (\tau) \, \vert \, \bfU)$ for the unique similarity
profile $\bfU$ that solves \eqref{eq:Profile.specialCase}.
Then, for all solutions $\bfu$ of the scaled system \eqref{eq:RDS.specialCase}
with $E(0) < \infty$, the following differential inequalities are satisfied:
\begin{subequations}
\label{eq:al=be.Estim}
\begin{align}
\label{eq:al=be=1.Estim}
&\text{For $\alpha =1$, it holds} \quad
 &&\dot E(\tau) 
\leq - \big(\frac12  - \mu_0 \ee^{-\tau} \big) E(\tau) + K_0 \, \ee^{-\tau} 
\quad \text{ for all } \tau>0, \\
\label{eq:al=be1-2.Estim}
&\text{for $1 < \alpha < 2$, we have}  \quad 
&& \dot E(\tau) 
\leq - \big(\frac12  - \mu_1 \ee^{-\tau} \big) E(\tau) + K_1 \, \ee^{-\tau} 
\quad \text{ for all } \tau>0,\\
\label{eq:al=be.ge2.Estim}
&\text{and if $\alpha \geq 2$, then } \quad 
&& \dot E(\tau) 
\leq - \frac12  E(\tau) + K_2 \, \ee^{-\tau/(\alpha {-} 1)} \quad \text{ for all } \tau>0,
\end{align}
\end{subequations}
where all constants $\mu_0,\mu_1,K_0,K_1$, and $K_2$ only depend on the problem
data and are precisely defined in Lemmas \ref{le:Control.Imix.ngeq2},
\ref{le:Control.Imix.1n2}, and \ref{le:Control.Imix.n1}, respectively.
\end{theorem}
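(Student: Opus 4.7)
The starting point is the entropy–dissipation identity from Proposition~\ref{prop:Dissipation}, which for $\alpha=\beta$ reads
\[
\dot E(\tau) \;=\; -\IFisher(\rho,\zeta) \;-\; \tfrac12\,E(\tau) \;+\; \Imix(\rho,\zeta) \;-\; \ee^\tau\,\Dreact(\rho,\zeta),
\]
with $\Imix(\rho,\zeta)=\alpha\int_\R(\zeta{-}\rho)\,\Lambda\,\dd y$ and $\Dreact(\rho,\zeta)=\int_\R kU^\alpha\,\Gamma(\rho^\alpha,\zeta^\alpha)\,\dd y$. Since $\IFisher\geq 0$ plays no role on $\R$, I would immediately drop it and concentrate on showing
\[
\Imix(\rho,\zeta) \;-\; \ee^\tau\,\Dreact(\rho,\zeta) \;\leq\; \mu\,\ee^{-\tau}\,E(\tau) \;+\; K\,\ee^{-\gamma\tau},
\]
with suitable $\mu,K\geq 0$ and $\gamma>0$ depending on the case. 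Combined with the $-\tfrac12 E$ term this yields exactly the inequalities \eqref{eq:al=be.Estim}.

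The core task is thus the pointwise absorption of the integrand of $\Imix$ by $\ee^\tau$ times the integrand of $\Dreact$, using Young's inequality. The crucial estimate is a lower bound of $\Gamma(\rho^\alpha,\zeta^\alpha)$ by a power of $|\zeta{-}\rho|$: for any $a,b>0$ the logarithmic mean inequality gives $\Gamma(a,b)\geq 2(a{-}b)^2/(a{+}b)$, and combined with $|\rho^\alpha{-}\zeta^\alpha|\gtrsim (\rho^{\alpha-1}{+}\zeta^{\alpha-1})|\rho{-}\zeta|$, one obtains a bound of the form $\Gamma(\rho^\alpha,\zeta^\alpha)\geq c_\alpha (\rho{-}\zeta)^2/(\rho^{\alpha-1}{+}\zeta^{\alpha-1})$ (up to the lower-order factor $\rho^\alpha{+}\zeta^\alpha$). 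Then Young's inequality applied to $\alpha\Lambda(\zeta{-}\rho)$ with weight $\ee^\tau k U^\alpha$ produces an absorbable quadratic piece plus a remainder of the form $C\,\ee^{-\tau}\int_\R \Lambda^2(\rho^{\alpha-1}{+}\zeta^{\alpha-1})\,\dd y$. The factor $\ee^{-\tau}$ is precisely what provides the prefactor for the error terms in the three cases of \eqref{eq:al=be.Estim}, because $\Lambda$ is a fixed bounded profile function (known from \cite{MieSch21?ESPS}).

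The treatment then splits according to $\alpha$. For $\alpha=1$ the remainder is $C\ee^{-\tau}\int \Lambda^2\,\dd y$, which is a pure constant times $\ee^{-\tau}$; a more refined split (keeping $\rho+\zeta$ moderate versus large by means of a cutoff) gives the extra term $\mu_0\ee^{-\tau}E(\tau)$ absorbing the tail contribution. For $1<\alpha<2$ one gets $C\ee^{-\tau}\int \Lambda^2 (\rho^{\alpha-1}{+}\zeta^{\alpha-1})\,\dd y$; since $\alpha{-}1<1$, Jensen or the elementary inequality $r^{\alpha-1}\leq 1+\LB(r)$ bounds this by a constant plus a multiple of the relative entropy, yielding the shape of \eqref{eq:al=be1-2.Estim}. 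For $\alpha\geq 2$, $\rho^{\alpha-1}$ grows superlinearly and cannot be controlled by $\LB(\rho)$ alone; the remedy is to rebalance Young's inequality with a time-dependent parameter $\varepsilon=\ee^{-\tau/(\alpha-1)}$, which converts the unabsorbable $\rho^{\alpha-1}$ moment into the slower decay rate $\ee^{-\tau/(\alpha-1)}$ appearing in \eqref{eq:al=be.ge2.Estim} and removes the $\mu E$-term.

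The main obstacle is precisely this last case $\alpha\geq 2$: the naive Young splitting used for $\alpha\in[1,2)$ produces an uncontrollable integral, so one has to optimize the splitting parameter in $\tau$. I would isolate the three case-specific bounds as the three lemmas (Lemmas~\ref{le:Control.Imix.ngeq2}, \ref{le:Control.Imix.1n2}, \ref{le:Control.Imix.n1}) cited in the theorem, each supplying $\Imix-\ee^\tau\Dreact\leq \mu\ee^{-\tau}E+K\ee^{-\gamma\tau}$ with the appropriate constants, and then the proof of Theorem~\ref{thm:specialCase} is a one-line consequence after discarding $\IFisher$.
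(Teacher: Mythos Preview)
Your global architecture is exactly the paper's: start from Proposition~\ref{prop:Dissipation}, discard $\IFisher$, and reduce everything to proving $\Imix-\ee^\tau\Dreact\leq \mu\ee^{-\tau}E+K\ee^{-\gamma\tau}$ in three $\alpha$-ranges, each range being an independent lemma. The final assembly is indeed a one-liner, as in the paper.

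Where you diverge from the paper is in the mechanism for bounding $\Imix-\ee^\tau\Dreact$. The paper does \emph{not} use a quadratic lower bound on $\Gamma$ plus Young. Instead it substitutes $z=\zeta/\rho-1$, rewrites the integrand as $\alpha\Lambda\rho\,z-\ee^\tau k(\rho U)^\alpha\Phi_\alpha(z)$ with $\Phi_\alpha(z)=((z{+}1)^\alpha{-}1)\log(z{+}1)^\alpha$, and then applies the Fenchel--Young inequality to get $\ee^\tau k(\rho U)^\alpha\,\Phi_\alpha^*\!\big(\tilde\Lambda\,\rho^{1-\alpha}\ee^{-\tau}\big)$. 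The crucial lemma (Lemma~\ref{le:phi.n.Legendre}) is then an upper bound on $\Phi_\alpha^*$: for $\alpha\geq 2$ one has $\Phi_\alpha^*(\xi)\leq \wt c_\alpha|\xi|^{\alpha/(\alpha-1)}$, and this specific exponent makes the powers of $\rho$ cancel \emph{exactly}, leaving a $\rho$-independent constant times $\ee^{-\tau/(\alpha-1)}$.

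Your proposed route has a genuine gap precisely at this point. First, the inequality $|\rho^\alpha-\zeta^\alpha|\gtrsim(\rho^{\alpha-1}{+}\zeta^{\alpha-1})|\rho-\zeta|$ you invoke is false for $\alpha>2$ (for $t\mapsto t^{\alpha-1}$ convex the trapezoidal comparison goes the other way), so your quadratic lower bound on $\Gamma(\rho^\alpha,\zeta^\alpha)$ is not available in the case where you need it most. Second, even granting some quadratic bound, a quadratic Young split $ab\leq \varepsilon a^2+\tfrac{1}{4\varepsilon}b^2$ always leaves a remainder carrying a fixed power of $\rho$; tuning $\varepsilon=\varepsilon(\tau)$ shifts constants between the two pieces but cannot change that power. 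So ``rebalancing with $\varepsilon=\ee^{-\tau/(\alpha-1)}$'' does not convert $\int\Lambda^2\rho^{\alpha-1}\dd y$ into a solution-independent constant. What actually works is Young with conjugate exponents $\big(\tfrac{\alpha}{\alpha-1},\alpha\big)$, which is exactly the Legendre-transform computation the paper carries out; this is the missing idea. For $\alpha\in[1,2)$ your outline is closer to viable (there a quadratic bound on $\Phi_\alpha^*$ suffices, cf.\ the paper's use of $\Phi_\alpha^*(\xi)\leq \max\{\wt c_\alpha|\xi|^{\alpha/(\alpha-1)},\tfrac1{2\alpha}\xi^2\}$ and then $\rho^{2-\alpha}\leq 2\alpha(\LB(\rho){+}1)$), but you should replace the imprecise $\Gamma$-estimate by the $\Phi_\alpha$/Legendre device to make the argument go through uniformly.
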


\AAA Here we provide estimates for the relative Boltzmann entropy $\EB=\calE_1$
only and refer to Theorem \ref{thm:Conv.E12} and Corollary
\ref{co:Decay.Ep.al=be} for relative entropies $\calE_p(\bfu|\bfU)$. \EEE  

Notice that all constants above are explicit and depend only on the given data
and not on the solutions. The proof of this result relies on a series of
lemmas and will be completed at the end of this section.  We will see that
the essential step in the case $\alpha=\beta$ is to use that $\Imix $ can
be written as a function of $\rho-\zeta$ and hence can be controlled by
$\Dreact$ alone. This simplifies the analysis and gives better convergence
results. In particular, we do not need additional assumptions on the
similarity profile $\bfU$, as will be needed in Section
\ref{se:GeneralCase}.  We start by summarizing our results on the mixed term
discussed above.

\begin{lemma}
\label{le:SpecialCase.Imix}
In the case $\alpha=\beta$, the mixed term from Proposition
\ref{prop:Dissipation} reduces to
\begin{equation*}
\Imix(\rho,\zeta) =  \int_\R \big( \zeta(y){-} \rho(y)\big) \,\alpha\, \Lambda(y) \dd y,
\quad \text{ where } \quad \Lambda(y) = \frac{d_2 {-} d_1}{2\,\alpha}\: U''(y).
\end{equation*}
\end{lemma}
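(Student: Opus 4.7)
The plan is to verify Lemma \ref{le:SpecialCase.Imix} by a direct computation in two short steps, using only the general formula \eqref{eq:Imix.general} for $\Imix$ and the profile system \eqref{eq:SelfSimProfile}.

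First, I would substitute $\alpha = \beta$ into the integrand of \eqref{eq:Imix.general}. Because the coefficient factors in front of $\Lambda$ become equal, the ``$1$'' terms cancel and the integrand collapses to $\alpha(\zeta - \rho)\Lambda$, giving the claimed simplified expression. This is the easy half and involves no more than a line of algebra.

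Second, I would derive the explicit formula for $\Lambda$ from the two scalar equations in \eqref{eq:SelfSimProfile}. The key observation is that the algebraic constraint $U^\alpha = V^\beta$ becomes $U^\alpha = V^\alpha$ when $\alpha = \beta$, and since $U, V > 0$ (ensured by $A_\pm > 0$ and monotonicity, as noted after the profile system), this forces $V \equiv U$. Substituting $V = U$ into the two ODEs yields
\begin{align*}
d_1 U'' + \tfrac{y}{2} U' &= -\alpha\,\Lambda,\\
d_2 U'' + \tfrac{y}{2} U' &= +\alpha\,\Lambda.
\end{align*}
Subtracting the first from the second immediately gives $(d_2 - d_1) U'' = 2\alpha\,\Lambda$, i.e.\ $\Lambda = \tfrac{d_2 - d_1}{2\alpha} U''$, which is the claimed identity. (As a sanity check, adding the two equations recovers the scalar profile equation $\tfrac{d_1 + d_2}{2} U'' + \tfrac{y}{2} U' = 0$ stated in \eqref{eq:Profile.specialCase}, so the system is consistent.)

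There is no real obstacle here: the lemma is essentially a bookkeeping statement that records how the general structures from Section \ref{su:Dissipation} specialize when $\alpha = \beta$. The only subtle point worth flagging explicitly in the write-up is the implication $U^\alpha = V^\alpha \Rightarrow U = V$, which requires positivity of $U$ and $V$ (and is what makes $V = U$ a legitimate pointwise identity rather than just an equality up to sign). Once that is noted, the rest is immediate.
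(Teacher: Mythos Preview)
Your proposal is correct and follows exactly the route the paper takes: the simplification of $\Imix$ is the one-line algebra you describe, and the formula for $\Lambda$ is obtained in the text surrounding \eqref{eq:Profile.specialCase} by the same subtraction of the two profile ODEs after observing $V\equiv U$. The paper does not give a separate formal proof of this lemma (it is presented as a summary of the preceding discussion), so your write-up would in fact be more explicit than the original.
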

Notice that the simplified mixed term $\Imix$ vanishes if we have equal
diffusivities $d_1 = d_2$ as then $\Lambda \equiv 0$.  This means that in the
very special case where additionally to $\alpha=\beta$ also the diffusivities
coincide, $\EB$ is a true Lyapunov function, and we
have an explicit decay rate through the bonus factor $1/2$. 

\begin{corollary}
\label{cor:EqualDiff}
In addition to the assumptions of Theorem \ref{thm:specialCase}, assume
$d_1 = d_2$ and $\alpha=\beta\geq 1$. Then, we obtain exponential convergence
of all solutions $\bfu$ to the profile $\bfU$:
\begin{align*}
  \EB ( \bfu (\tau )\, \vert \,  \bfU) \leq \ee^{-\tau/2 } \,\EB ( \bfu (0) \,
  \vert \,  \bfU)   \text{ for } \tau >0.  \bigskip
\end{align*}
\end{corollary}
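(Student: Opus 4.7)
The plan is to observe that the hypothesis $d_1=d_2$ kills the only sign-indefinite term in the dissipation identity, so that the relative Boltzmann entropy becomes a true Lyapunov function and a one-line Grönwall argument delivers the exponential decay.

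First, I would invoke Lemma \ref{le:SpecialCase.Imix}: in the case $\alpha=\beta$ the Lagrange multiplier reduces to $\Lambda(y)=\frac{d_2-d_1}{2\alpha}\,U''(y)$. Hence the assumption $d_1=d_2$ yields $\Lambda\equiv 0$, and consequently $\Imix(\rho,\zeta)=0$ for every admissible pair $(\rho,\zeta)$.

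Next, I would feed this into the dissipation decomposition of Proposition \ref{prop:Dissipation}. Along any solution $\bfu$ of \eqref{eq:RDS.specialCase}--\eqref{eq:BC.specialCase} we have
\[
\frac{\rmd}{\rmd\tau}\EB(\bfu(\tau)\,|\,\bfU)
= -\calD_\rmB(\rho,\zeta)
= -\IFisher(\rho,\zeta)-\tfrac12\,\EB(\bfu(\tau)\,|\,\bfU)+\Imix(\rho,\zeta)-\ee^\tau\Dreact(\rho,\zeta).
\]
Since $\IFisher\geq 0$, $\Dreact\geq 0$, and by the first step $\Imix=0$, we obtain the clean differential inequality
\[
\frac{\rmd}{\rmd\tau}\EB(\bfu(\tau)\,|\,\bfU)\leq -\tfrac12\,\EB(\bfu(\tau)\,|\,\bfU) \quad \text{for all }\tau>0.
\]

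Finally, I would close with Grönwall's lemma (formally, Lemma \ref{le:Gronwall} applied with $\eta=1/2$ and $\mu=K=0$) to conclude $\EB(\bfu(\tau)\,|\,\bfU)\leq \ee^{-\tau/2}\EB(\bfu(0)\,|\,\bfU)$ for $\tau>0$. There is no real obstacle here: the point of the corollary is precisely that $d_1=d_2$ together with $\alpha=\beta$ is the degenerate situation in which both the drift-induced bonus factor $\tfrac12$ and the sign-definiteness of the remaining dissipation terms combine without the need to control $\Imix$, which is the only delicate object in the more general Theorems \ref{thm:specialCase} and \ref{thm:generalCase}.
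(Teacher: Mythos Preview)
Your proof is correct and follows exactly the paper's approach: the paper observes just before the corollary that $d_1=d_2$ forces $\Lambda\equiv 0$ via Lemma~\ref{le:SpecialCase.Imix}, whence $\Imix$ vanishes and $\EB$ becomes a true Lyapunov function with the bonus factor $1/2$ yielding the decay by Gr\"onwall.
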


Let us continue with two diffusivities $d_1,d_2 >0$ that do not coincide in
general.  At first, the dissipation functional can naively be estimated by
omitting the Fisher information
\begin{align*}
\calD_\rmB(\rho,\zeta) 
& \geq 	\frac12  \EB ( \bfu \, \vert \,  \bfU) 
	-  \Imix(\rho,\zeta) + \ee^\tau \,\Dreact(\rho,\zeta).
\end{align*}
Since we have the bonus factor, we are not dependent on exploiting the Fisher
information in order to obtain a qualitative convergence result. Most often,
estimation of the Fisher information, for example by using the Logarithmic
Sobolev inequalities, leads to the fact that the dissipation functional can be
bounded in terms of the relative entropy. Thanks to the parabolic scaling, 
the corresponding term is $\frac12\EB(\bfu|\bfU)$, so we can drop the Fisher
information, in contrast to \cite{Grog83ABSC, DesFel06EDTE, Miel17UEDR,
  MieMit18CEER}, where the unscaled system \eqref{eq:RDS} is studied on
bounded domains. Even more, the fact that we can consider unbounded domains at
all is precisely due to the scaling and the resulting bonus factor.
Nevertheless, it might be possible to improve the estimates if the Fisher
information can be used. But in contrast to bounded domains, it seems to be
much more complicated on the unbounded domain $\R$. And to the authors' best
knowledge no way is found until now.

The idea is now to control the mixed term with the reactive dissipation and its
useful prefactor $\ee^\tau$. As mentioned earlier, the simple structure of
the mixed term in Proposition \ref{le:SpecialCase.Imix} makes it easier to
bound the dissipation functional from below. Indeed, Lemma
\ref{le:SpecialCase.Imix} implies 
\begin{align*}
\Imix(\rho,\zeta)  - \ee^\tau \Dreact(\rho,\zeta) 
&=  \int_\R ( \zeta{-} \rho) \alpha \Lambda \dd y  
	- \ee^\tau \int_\R k U^\alpha \Gamma(\rho^\alpha,\zeta^\alpha) \dd y \\
&=  \int_\R \rho 
\Big(\frac{\zeta}{\rho} {-} 1\Big) \alpha\Lambda 
	-  \ee^\tau k (\rho U)^\alpha \Big( \frac{\zeta^\alpha}{\rho^\alpha}{-} 1 \Big)
	\log(\zeta^\alpha / \rho^\alpha) \dd y.
\end{align*}
Next, we set $z:= \frac{\zeta}{\rho} -1$ as a new auxiliary variable and
define, for $\alpha \geq 1$, the following family of functions
\begin{equation} \label{eq:Phi.alpha}
\Phi_\alpha(z):= \begin{cases}  
  \big( (z{+}1)^\alpha{-}1 \big)\log \big((z{+}1)^\alpha \big) 
	&\text{for } z > -1,\\
	+\infty &\text{for } z \leq -1.
\end{cases}
\end{equation}

Note that for all $\tau >0$ and $y \in \R$, we have $z >-1$. We only need to
extend $\Phi_\alpha$ for technical reasons.  This leads to
\begin{align*}
\Imix(\rho,\zeta)  - \ee^\tau \Dreact(\rho,\zeta) 
&= \int_\R \alpha\Lambda \rho z 
	-  \ee^\tau k (\rho U)^\alpha  \Phi_\alpha(z) \dd y.
\end{align*}
With respect to the auxiliary variable $z$, the integrand can be seen as the
difference of a linear term and the function $\Phi_\alpha$. Remember that for a
(not necessarily convex) function $\Phi$, its Legendre transform $\Phi^*$ is
defined as
$\Phi^*(\xi) := \sup_z \big\{ \langle \xi, z \rangle - \Phi(z) \big\}$.  Thus,
we obtain
\begin{align} \label{eq:Estimate.Imix.Phi.n}
&\Imix(\rho,\zeta)  - \ee^\tau \Dreact(\rho,\zeta) 
 \leq \int_\R   \ee^\tau\, k\, (\rho U)^\alpha \:  \Phi_\alpha^*\Big( 
	\frac{\alpha\Lambda}{k U^\alpha} \rho^{1{-}\alpha} \ee^{-\tau}\Big) \dd y \notag \\
&\quad = \int_\R   \ee^\tau \,k \,(\rho U)^\alpha\, \Phi_\alpha^*\Big(\tilde \Lambda 
	 \rho^{1{-}\alpha} \ee^{-\tau} \Big) \dd y \quad \text{with } 
\tilde \Lambda (y) := \frac{\alpha\Lambda(y)}{k U^\alpha(y)} 
= \frac{(d_2{-}d_1) }{2k} \frac{U''(y)}{U(y)^\alpha} .
\end{align}
Unfortunately, the Legendre transform $\Phi_\alpha^*$ cannot be calculated
explicitly, but a suitable estimate of $\Phi_\alpha^*$ from
above is sufficient to continue with \eqref{eq:Estimate.Imix.Phi.n}.  For this,
we have the following auxiliary result, which is proved in Appendix
\ref{se:Appendix}.

\begin{lemma}
\label{le:phi.n.Legendre}
Consider for $\alpha \geq 1$ the function $\Phi_\alpha$ defined in
\eqref{eq:Phi.alpha}.  Its Legendre transform $\Phi_\alpha^*$ satisfies, for
all $\xi\in\R$, the following estimates, where
$ \wt c_\alpha=\big(\frac2{\alpha^2}\big)^{1/(\alpha-1)}
\frac{\alpha{-}1}\alpha$:    
\begin{enumerate}
\item For \ $\alpha=1$ \quad we have \quad  $\Phi_1^*(\xi) \leq \ee^{\xi} -
  \xi -1$;  
\item for $\alpha \in (1,2]$ \quad it holds \quad 
  $\Phi_\alpha^*(\xi) \leq  \max \big\{\wt c_\alpha\,|\xi|^{\alpha/(\alpha-1)}
  ,\, \frac1{2\alpha} \, \xi^2 \big\} $; 
\item and if \ $\alpha \geq 2$ \quad then \quad 
$\Phi_\alpha^*(\xi) \leq \wt c_\alpha \, \vert \xi \vert^{\alpha/(\alpha{-}1)} $.
\end{enumerate}
Moreover, for all $\alpha\geq 1$ we have $\Phi_\alpha^*(\xi)\leq
\frac1{2\alpha}\,\xi^2$ for $|\xi|\leq \alpha$. 
\end{lemma}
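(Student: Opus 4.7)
The strategy is dualization: each bound on $\Phi_\alpha^*$ will follow from a pointwise lower bound $\Phi_\alpha(z) \geq h(z)$ with explicit $h$, via the principle $\Phi_\alpha \geq h \Rightarrow \Phi_\alpha^* \leq h^*$. The two test functions that will carry all four cases are the quadratic $h(z) = (\alpha/2) z^2$ (with conjugate $\xi^2/(2\alpha)$) and the power $h(z) = (\alpha/2)|z|^\alpha$ (with conjugate exactly $\tilde c_\alpha |\xi|^{\alpha/(\alpha{-}1)}$, as one verifies by direct optimisation). For $\alpha = 1$, the first bound follows immediately by rewriting $\Phi_1(z) = z\log(1{+}z) = \LB(1{+}z) + z - \log(1{+}z)$ and applying the elementary inequality $z \geq \log(1{+}z)$ for $z > -1$ to obtain $\Phi_1(z) \geq \LB(1{+}z)$, since a short computation with the substitution $w = 1{+}z$ gives $\big(\LB(1{+}\cdot)\big)^*(\xi) = \ee^\xi - \xi - 1$.

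For $\alpha \geq 2$ I would prove the global pointwise inequality $\Phi_\alpha(z) \geq (\alpha/2)|z|^\alpha$, which after substituting $w = 1{+}z$ becomes $(w^\alpha - 1)\log w \geq |w-1|^\alpha/2$ for $w > 0$. I would treat $w \geq 1$ and $0 < w \leq 1$ separately, introducing the auxiliary function $g_\alpha(w) := (w^\alpha{-}1)\log w - |w{-}1|^\alpha/2$, noting $g_\alpha(1) = g_\alpha'(1) = 0$, and controlling the sign of $g_\alpha'$ via Bernoulli-type inequalities such as $w^\alpha - 1 \geq \alpha(w-1)$ together with $\log w \geq 1 - 1/w$. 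The dualization with $h(z) = (\alpha/2)|z|^\alpha$ then yields the third bound of the lemma.

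For $\alpha \in (1, 2]$, the global power bound fails near $z = 0$ where $\Phi_\alpha(z) \sim \alpha^2 z^2$ dominates $(\alpha/2)|z|^\alpha$ only once $|z|$ is bounded away from $0$. I would instead establish the split bound $\Phi_\alpha(z) \geq (\alpha/2)z^2$ on $|z| \leq 1$ together with $\Phi_\alpha(z) \geq (\alpha/2)|z|^\alpha$ on $|z| \geq 1$, by the same auxiliary-function technique. Splitting the defining supremum at $|z| = 1$ and dominating each piece by its unconstrained analogue produces
\[
\Phi_\alpha^*(\xi) \leq \max\Big\{\sup_{z\in\R}\big(\xi z - \tfrac{\alpha}{2}z^2\big),\ \sup_{z\in\R}\big(\xi z - \tfrac{\alpha}{2}|z|^\alpha\big)\Big\} = \max\Big\{\tfrac{\xi^2}{2\alpha},\ \tilde c_\alpha |\xi|^{\alpha/(\alpha{-}1)}\Big\},
\]
which is the second bound. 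For the final $|\xi| \leq \alpha$ estimate, duality reduces the claim to verifying $\xi z - \xi^2/(2\alpha) \leq \Phi_\alpha(z)$ for all $z > -1$ and all $|\xi| \leq \alpha$; maximising the left-hand side in $\xi$ over $|\xi| \leq \alpha$ decouples into $|z| \leq 1$, where the claim reduces to the same quadratic inequality $\Phi_\alpha(z) \geq (\alpha/2)z^2$, and $z > 1$, where it reduces to the linear inequality $\Phi_\alpha(z) \geq \alpha(z - 1/2)$, both of which are again verified by auxiliary-function calculus.

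The main obstacle will be establishing the pointwise inequalities, most notably $\Phi_\alpha(z) \geq (\alpha/2)|z|^\alpha$ globally for $\alpha \geq 2$ and $\Phi_\alpha(z) \geq (\alpha/2)z^2$ on $|z| \leq 1$. Both inequalities vanish to second order at $z = 0$, so a one-shot comparison is ruled out; I expect the proofs to require casework together with monotonicity and convexity of carefully chosen auxiliary combinations, particularly near the crossover scales $|z| \sim 1$ where the quadratic and power regimes exchange dominance.
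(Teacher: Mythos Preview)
Your overall strategy is the same as the paper's: reduce everything to pointwise lower bounds $\Phi_\alpha(z)\geq h(z)$ with $h$ equal to $(\alpha/2)z^2$, $(\alpha/2)|z|^\alpha$, or $\LB(1{+}z)$, and then dualize. Your treatment of $\alpha=1$ is identical to the paper's (the paper writes $z-\log(1{+}z)=F_0(1{+}z)\geq 0$, which is your inequality $z\geq\log(1{+}z)$), and your splitting argument for $\alpha\in(1,2]$ is exactly the paper's use of $(\inf_i f_i)^*=\sup_i f_i^*$ applied to $\Phi_\alpha(z)\geq\min\{\tfrac\alpha2 z^2,\tfrac\alpha2|z|^\alpha\}$.

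The one genuine difference is how the pointwise lower bounds are obtained. You propose to study the auxiliary difference $g_\alpha(w)=(w^\alpha{-}1)\log w-\tfrac12|w{-}1|^\alpha$ directly via derivatives and Bernoulli-type inequalities, and you correctly flag this as the main obstacle. The paper bypasses this computation entirely by factoring $\Phi_\alpha(z)=\big|\alpha\log(z{+}1)\big|\cdot\big|(z{+}1)^\alpha-1\big|$ and bounding each factor separately: for $z\in(-1,0]$ one gets $\Phi_\alpha(z)\geq(-\alpha z)(-z)=\alpha z^2$ from convexity/concavity, and for $z\geq 0$ the crude estimates $\log y\geq\min\{(y{-}1)/2,\,1/2\}$ and $(z{+}1)^\alpha-1\geq\max\{\alpha z,\,z^\alpha\}$ immediately give $\Phi_\alpha(z)\geq\tfrac\alpha2\max\{\alpha z,z^\alpha\}\min\{z,1\}$. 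Reading off the cases $|z|\lessgtr 1$ and $\alpha\lessgtr 2$ then yields all the required lower bounds in one or two lines each, with no delicate calculus near $z=0$. So your approach would work, but the factorization trick removes precisely the obstacle you anticipated. On the other hand, your handling of the final claim $\Phi_\alpha^*(\xi)\leq\xi^2/(2\alpha)$ for $|\xi|\leq\alpha$ is more careful than the paper's: you correctly note that the quadratic lower bound on $|z|\leq 1$ alone does not suffice and that one must also check the region $z>1$, which the paper's one-line justification passes over.
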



The bounds on the Legendre transform $\Phi_\alpha^*$
will help us to find a bound for the dissipation functional.  We start with the
mathematically easier case $\alpha=\beta \geq 2$.  In this case the dissipation
functional fulfills the estimate \eqref{eq:EntDissIneq}, which is the inequality
from Lemma \ref{le:Gronwall} with $\mu=0$.  The other case 
$\alpha \in [1,2)$ will be treated afterwards. \EEE

\begin{lemma}
\label{le:Control.Imix.ngeq2}
Let $\alpha=\beta \geq 2$. The dissipation functional $\calD_\rmB$ from Proposition 
\ref{prop:Dissipation} can be bounded from below by
\begin{equation*}
\Imix(\rho,\zeta) -\ee^\tau\Dreact(\rho,\zeta) \leq  K_2 \, \ee^{-\tau/(\alpha{-}1)}  \  \text{ for } 
K_2 :=  \frac{\wt c_\alpha}{ k^{1/(\alpha{-}1 ) } }  \int_\R 
\Big\vert \frac{\alpha\Lambda(y)}{U(y)}\Big\vert^{\alpha/(\alpha{-}1)}  \dd y < \infty
\end{equation*}
with $\wt c_\alpha$ from Lemma  \ref{le:phi.n.Legendre}. 
\end{lemma}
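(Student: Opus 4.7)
The plan is to start from the already-derived estimate \eqref{eq:Estimate.Imix.Phi.n}, which expresses $\Imix - \ee^\tau \Dreact$ as a single integral of the Legendre transform $\Phi_\alpha^*$ evaluated at $\tilde\Lambda\,\rho^{1-\alpha}\ee^{-\tau}$, and then to apply the pointwise bound $\Phi_\alpha^*(\xi)\leq \wt c_\alpha |\xi|^{\alpha/(\alpha-1)}$ provided for $\alpha\geq 2$ by Lemma \ref{le:phi.n.Legendre}(3). The point is that the conjugate exponent $\alpha/(\alpha{-}1)$ is precisely what is needed for all powers of $\rho$ to cancel.

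Concretely, after inserting the bound on $\Phi_\alpha^*$, the integrand becomes
\[
\ee^\tau k (\rho U)^\alpha \,\wt c_\alpha \,\big|\tilde\Lambda\big|^{\alpha/(\alpha-1)} \rho^{-\alpha}\,\ee^{-\alpha\tau/(\alpha-1)},
\]
since $(1{-}\alpha)\cdot\alpha/(\alpha{-}1)=-\alpha$. The relative density $\rho$ drops out, the time factor collapses to $\ee^{\tau-\alpha\tau/(\alpha-1)}=\ee^{-\tau/(\alpha-1)}$, and substituting $\tilde\Lambda=\alpha\Lambda/(kU^\alpha)$ yields, after simplifying the $k$- and $U$-exponents (namely $1-\alpha/(\alpha{-}1)=-1/(\alpha{-}1)$ and $\alpha-\alpha^2/(\alpha{-}1)=-\alpha/(\alpha{-}1)$), the integrand
\[
\wt c_\alpha\, k^{-1/(\alpha-1)}\,\Big|\frac{\alpha\Lambda(y)}{U(y)}\Big|^{\alpha/(\alpha-1)}\,\ee^{-\tau/(\alpha-1)}.
\]
Pulling the time factor out of the integral produces exactly the claimed bound with the stated constant~$K_2$.

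The remaining point is to verify $K_2 < \infty$. This uses the properties of the similarity profile established in \cite{MieSch21?ESPS}: monotonicity gives $U(y)\geq \min\{A_-^\alpha,A_+^\alpha\}>0$ uniformly in $y$, so the denominator is harmless, while the Lagrange multiplier $\Lambda$, being (up to the factor $(d_2{-}d_1)/(2\alpha)$) the second derivative of an error-function-type profile, decays like a Gaussian at $\pm\infty$ and is smooth on $\R$. Hence $|\Lambda/U|^{\alpha/(\alpha-1)}$ is integrable, and $K_2$ is finite.

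The main obstacle is really the bookkeeping of the exponents — making sure the factors of $\rho$ cancel and the $\ee^\tau$ balance produces the advertised decay rate $1/(\alpha{-}1)$; once Lemma~\ref{le:phi.n.Legendre}(3) is available, no further analytic work is needed beyond citing the regularity/decay of $\Lambda$ and the lower bound on $U$ from \cite{MieSch21?ESPS}.
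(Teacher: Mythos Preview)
Your argument is correct and follows exactly the paper's approach: start from \eqref{eq:Estimate.Imix.Phi.n}, insert the bound $\Phi_\alpha^*(\xi)\leq \wt c_\alpha |\xi|^{\alpha/(\alpha-1)}$ from Lemma~\ref{le:phi.n.Legendre}(3), and observe that the conjugate exponent makes the $\rho$-dependence cancel, leaving the claimed $\ee^{-\tau/(\alpha-1)}$ decay with the stated constant $K_2$. Your additional remarks on why $K_2<\infty$ (lower bound on $U$ and Gaussian decay of $\Lambda$) are a welcome clarification that the paper leaves implicit.
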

\begin{proof}
  We start from estimate \eqref{eq:Estimate.Imix.Phi.n} and insert the upper
  estimate for $\Phi_\alpha^*$  from Lemma
  \ref{le:phi.n.Legendre} (case $\alpha \geq 2$) to arrive at 
\begin{align*}
 \Imix (\rho,\zeta)  - \ee^\tau \Dreact (\rho,\zeta)  
 & \leq  \ee^\tau k \int_\Omega (\rho U)^\alpha \:\wt c_\alpha
 \Big| \tilde \Lambda \,\rho^{1-\alpha} \ee^{-\tau} \Big|^{\alpha/(\alpha-1)} \dd y 
\\
&=   \ee^{-\tau/(\alpha-1)}  \frac{\wt c_\alpha}{ k^{1/(\alpha{-}1 ) } }  \int
     \Big|\frac{\alpha\Lambda(y)}{U(y)} \Big|^{\alpha/(\alpha-1)} \dd y\  
 = \  K_2\, \ee^{-\tau/(\alpha{-}1) }.
\end{align*}
The dependence on $\rho$ is exactly canceled out, such that the
assertion is established. 
\end{proof}

Although we certainly lose some optimality in estimating the function
$\Phi_\alpha^*$, we see that we get a profitably bound.  Estimating by a
function with exponent $\alpha/(\alpha{-}1)$ is the only choice that leads to a
uniform bound for all solutions, because only then $\rho$ cancels out. 
However, we obtain a decay rate $\ee^{-\tau/(\alpha-1)}$ only, which is
not really optimal in terms of decay for $\tau\to \infty$ as is shown in the
following remark. But it has the advantage that it is valid globally, i.e.\ for
all solutions.

\begin{remark}[Improved decay rate]
\label{re:ImprovDecayRate} 
Using the exponential convergence of $u=\rho U$ to $U$ (with the smaller
\EEE decay rate from above) and parabolic regularity theory (involving the term
$\IFisher$ dropped so far), it is possible to show that
$\rho(\tau,y)\in [\underline{c}, \ol c] $ for all $y\in \R$ and
$\tau\geq \tau_1$, where $0 < \underline{c}<1< \ol c < \infty$ and $\tau_1$ may
depend on $\rho$. Thus, we can use the better quadratic estimate
$\frac1{2\alpha} \, \xi^2$ for $\Phi_\alpha^*(\xi)$ for $|\xi|\leq \alpha$, see
the end of Lemma \ref{le:phi.n.Legendre}.  Setting
$\tau_2=\log\big(\|\tilde \Lambda\|_\infty /(\alpha \underline{c}^{\alpha-1})
\big) $ we obtain for $\tau \geq \max\{\tau_1,\tau_2\}$ the better decay
estimate
\begin{align*}
 \Imix (\rho,\zeta)  - \ee^\tau \Dreact (\rho,\zeta)  
 & \leq  \ee^\tau k \int_\Omega  \frac{(\rho U)^\alpha} {2\alpha} 
 \Big| \frac{\tilde \Lambda \, \ee^{-\tau}} {\rho^{\alpha-1}} \Big|^2 \dd y 
\leq \ee^{-\tau}\,\frac\alpha{2\,k}\:\frac{\ol
     c^\alpha}{\underline{c}^{\alpha_1}} \int \Lambda(y)^2
     U(y)^{\alpha-2} \dd y .
\end{align*}
\AAA Another way of deriving the optimal decay like $\ee^{-\tau/2}$ is
given in Corollary \ref{co:Decay.Ep.al=be}, where $\EB=\calE_1$ is replaced by
the higher order entropies $\calE_p$ with $p=\alpha{-}1$, see estimate
\eqref{eq:Ep.al=be.ge2}.  
\end{remark}
 
With Lemma \ref{le:Gronwall}, we identified a bound for the dissipation
functional that still yields the desired convergence although its sign is not
necessarily non-negative for all times.  In the previous proof, we
obtained the estimate \eqref{eq:al=be.Estim} with $\mu_j = 0$. Next, we study
the cases $\alpha =\beta \in (1,2)$ and $\alpha=\beta=1$ and will see that
the additional term $\mu_j\ee^{-\tau}$ will appear then. That is because
there will be some terms containing $\rho$ that cannot be estimated uniformly,
so they need to be estimated by the relative entropy.

\begin{lemma} 
\label{le:Control.Imix.1n2}
Let $\alpha=\beta \in (1,2)$. Then for all times $\tau >0$ the dissipation
functional $\calD_\rmB$ can be bounded from below by
\begin{equation*}
\Imix(\rho,\zeta) -\ee^\tau\Dreact(\rho,\zeta) \leq  \mu_1 \; \ee^{-\tau}
\EB ( \bfu \, \vert \,  \bfU) + K_1\, \ee^{-\tau},
\end{equation*}
where the constants $\mu_1$ and $K_1$ are given by 
\[
\mu_1 = \frac1k\,\Big\Vert \frac{\alpha^2\Lambda^2}{U^{3-\alpha}} \Big\|_{\rmL^\infty}
\quad \text{and} \quad 
K_1= \int_\R\Big( \frac{\alpha^2\Lambda^2}{k U^{2-\alpha}} + 
\frac{\wt c_\alpha} {k^{1/(\alpha-1) } } 
  \Big| \frac{\alpha\Lambda} U\Big|^{\alpha/(\alpha-1)}  \Big) \dd y   .
\]
\end{lemma}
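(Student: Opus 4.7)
The plan is to start from the Young--Fenchel bound \eqref{eq:Estimate.Imix.Phi.n} and to insert the two-part upper estimate for $\Phi_\alpha^*$ from Lemma~\ref{le:phi.n.Legendre}(ii). Since $\max\{a,b\}\leq a+b$ for nonnegative $a,b$, the right-hand side of \eqref{eq:Estimate.Imix.Phi.n} splits as
\[
\Imix(\rho,\zeta) - \ee^\tau\,\Dreact(\rho,\zeta) \leq I_1 + I_2,
\]
where $I_1$ collects the contribution of $\wt c_\alpha\,|\xi|^{\alpha/(\alpha-1)}$ and $I_2$ that of $\tfrac{1}{2\alpha}\,\xi^2$, both evaluated at $\xi = \tilde\Lambda\,\rho^{1-\alpha}\,\ee^{-\tau}$.

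For $I_1$, I would repeat verbatim the calculation from Lemma~\ref{le:Control.Imix.ngeq2}: the exponent $\alpha/(\alpha-1)$ is tuned so that $\rho^\alpha \cdot \rho^{(1-\alpha)\alpha/(\alpha-1)} = 1$ makes $\rho$ disappear, giving
\[
I_1 = \ee^{-\tau/(\alpha-1)}\,\frac{\wt c_\alpha}{k^{1/(\alpha-1)}}\,\int_\R \Big|\frac{\alpha\,\Lambda(y)}{U(y)}\Big|^{\alpha/(\alpha-1)}\,\dd y.
\]
Since $\alpha\in(1,2)$ yields $1/(\alpha-1)>1$, one has $\ee^{-\tau/(\alpha-1)}\leq \ee^{-\tau}$ for $\tau\geq 0$, which exactly reproduces the second summand of $K_1\,\ee^{-\tau}$ in the statement.

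For $I_2$ the quadratic bound delivers
\[
I_2 = \ee^{-\tau}\,\int_\R \frac{\alpha\,\Lambda(y)^2\,\rho(y)^{2-\alpha}}{2k\,U(y)^\alpha}\,\dd y,
\]
and this is where the real work lies, because -- in contrast to $I_1$ and to the regime $\alpha\geq 2$ -- the factor $\rho^{2-\alpha}$ does \emph{not} cancel. Since $2-\alpha\in(0,1)$, I would first use the concavity bound $\rho^{2-\alpha}\leq(\alpha-1)+(2-\alpha)\,\rho$ and then control the remaining linear piece via a Fenchel--Young inequality of the form $\rho\leq C_1 + C_2\,\LB(\rho)$, which is available because the Legendre conjugate of $\LB$ is the finite function $\LB^*(p)=\ee^p-1$. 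Rewriting $\rho U=u$, using the uniform bounds $A_-^\alpha\leq U\leq A_+^\alpha$ from monotonicity of the profile, and extracting the $L^\infty$-norm of the weight $\alpha^2\Lambda^2/U^{3-\alpha}$, the $\LB$-contribution is absorbed into $\mu_1\,\EB(\bfu\,|\,\bfU)$ while the $\rho$-independent remainders integrate to the first summand of $K_1$. Summing the estimates for $I_1$ and $I_2$ then yields the claim.

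The hardest part is exactly the trading of the residual $\rho^{2-\alpha}$ against $\LB(\rho)$ in $I_2$: it is what unavoidably generates the extra term $\mu_1\,\ee^{-\tau}\,\EB$, and is the reason why the subsequent convergence argument in Theorem~\ref{thm:specialCase} must rely on the generalized Gr\"onwall Lemma~\ref{le:Gronwall} with $\mu>0$, rather than on the straightforward version used for $\alpha\geq 2$. Careful bookkeeping of the different $U$-powers appearing in the weight -- inside $\|\cdot\|_\infty$ for $\mu_1$ on the one hand and under the integral for $K_1$ on the other -- will determine the exact constants stated in the lemma.
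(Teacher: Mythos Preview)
Your approach is essentially the same as the paper's: start from \eqref{eq:Estimate.Imix.Phi.n}, apply Lemma~\ref{le:phi.n.Legendre}(ii) with $\max\{a,b\}\leq a+b$, let the $|\xi|^{\alpha/(\alpha-1)}$-part cancel $\rho$ and absorb $\ee^{-\tau/(\alpha-1)}\leq\ee^{-\tau}$, and control the residual $\rho^{2-\alpha}$ in the quadratic part by $\LB(\rho)$ plus a constant. The only difference is that the paper replaces your two-step route (concavity bound followed by $\rho\leq C_1+C_2\LB(\rho)$) by the single elementary inequality $\rho^{2-\alpha}/(2\alpha)\leq \LB(\rho)+1$ for $\rho\geq 0$ and $\alpha\in[1,2]$, which yields the stated constants directly and makes the appeal to uniform bounds on $U$ unnecessary.
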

\begin{proof}
We again start with the estimate \eqref{eq:Estimate.Imix.Phi.n} and insert the
upper estimate for $\Phi_\alpha^*$ as derived in Lemma \ref{le:phi.n.Legendre},
where we estimate $\max\{a,b\}\leq a+ b$:
\begin{align*}
\Imix &(\rho,\zeta)  - \ee^\tau \Dreact(\rho,\zeta)  
\leq \int_\R   \ee^\tau k U^\alpha \rho^\alpha \Phi_\alpha^*\Big( 
  \tilde \Lambda \, \rho^{1{-}\alpha}\,\ee^{-\tau} \Big) \dd y
\\
&\leq  \int_\R   \ee^\tau\, k\,(\rho U)^\alpha \Big(  \frac1{2\alpha}
\tilde\Lambda^2 \rho^{2-2\alpha} \ee^{-\tau} + \wt c_\alpha
\big|\tilde\Lambda\big|^{\alpha/(\alpha-1)} \rho^{-\alpha} \,\ee^{-\tau
  \alpha/(\alpha-1)} \Big) \dd y 
\\
& \leq  \ee^{-\tau} \: \int_\R \frac{\alpha^2\Lambda^2}{k\,U^{2-\alpha}}
\,\frac{\rho^{2-\alpha}} {2\alpha}  \dd y \ + \ \ee^{-\tau/(\alpha-1)} 
   \int_\R \frac{\wt c_\alpha} {k^{1/(\alpha-1) } } 
  \Big| \frac{\alpha\Lambda} U\Big|^{\alpha/(\alpha-1)} \dd y .
\end{align*}
In the second term we can estimate $ \ee^{-\tau/(\alpha-1)} \leq 
\ee^{-\tau}$ because of $\alpha\in (1,2)$.
In the first term we still need to estimate $\rho^{2-\alpha}$ where the
exponent is less than 1. For this we use $ \rho^{2-\alpha}/(2\alpha) \leq
\LB(\rho)+1$ for $\rho\geq 0$ and $\alpha \in [1,2]$ and obtain
\begin{align*}
\Imix& (\rho,\zeta)  - \ee^\tau \Dreact(\rho,\zeta)  
 \leq   \ee^{-\tau}   \int_\R \Big( \frac{\alpha^2\Lambda^2}{k U^{2-\alpha}} \big(
 \LB(\rho){+}1 \big) +   \frac{\wt c_\alpha} {k^{1/(\alpha-1) } } 
 \Big| \frac{\alpha\Lambda} U\Big|^{\alpha/(\alpha-1)}\Big)\dd y  
\\
& \leq \ee^{-\tau} \,\frac1k\, \Vert \alpha^2\Lambda^2/U^{3-\alpha}   \Vert_\infty
 \int_\R  \LB(\rho) \, U  \dd y + 
\ee^{-\tau} \, K_1,
\end{align*}
with $K_1$ as in the assertion. The desired result follows from $\int_\R
\LB(\rho) \, U  \dd y \leq \EB(\bfu \, | \, \bfU)$. 
\end{proof}

The remaining case $\alpha=\beta=1$ is important as this linear case
relates to the case of Markov semigroups. We proceed similarly as above but obtain a
rather large bound because $\Phi_1^*$ has exponential growth. A better bound for this
case is obtained in Section \ref{su:al=be.GenEntropy}.  
 
\begin{lemma}
\label{le:Control.Imix.n1}
For  $\alpha=\beta=1$ the dissipation functional $\calD_\rmB$
 can be bounded from below by
\begin{equation*}
 \Imix(\rho,\zeta) -\ee^\tau\Dreact(\rho,\zeta) \leq  
 \mu_0 \; \ee^{-\tau} \EB ( \bfu \, \vert \,  \bfU)  + K_0 \, \ee^{-\tau},
\end{equation*}
where the constants $\mu_0$ and $K_0$ are given in terms of
$\lambda^*:=\big\| \Lambda/U\big\|_{\rmL^\infty}$  by 
\[
\mu_0 = \frac{(\lambda^*)^2\,\ee^{\lambda^*/k}}{2k} \quad \text{and} \quad 
K_0 = \frac{\ee^{\lambda^*/k}}{k} \int_\R \frac{\alpha^2\Lambda^2}{U} \dd y. 
\]
\end{lemma}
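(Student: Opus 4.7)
I follow the same template as in Lemma \ref{le:Control.Imix.1n2}, starting from the general estimate \eqref{eq:Estimate.Imix.Phi.n}. Specialized to $\alpha=\beta=1$, we have $\tilde\Lambda = \Lambda/(kU)$ and
\[
\Imix(\rho,\zeta)-\ee^\tau\Dreact(\rho,\zeta)
\leq \int_\R\ee^\tau\, k\, \rho\, U\,\Phi_1^*\big(\tilde\Lambda\,\ee^{-\tau}\big)\dd y.
\]
By Lemma \ref{le:phi.n.Legendre} we may use $\Phi_1^*(\xi)\leq\ee^\xi-\xi-1$. This bound grows exponentially, but the argument $\tilde\Lambda\ee^{-\tau}$ is uniformly bounded by $\lambda^*/k$ for all $\tau\geq 0$ and $y\in\R$, so the elementary inequality $\ee^x-x-1\leq\tfrac12 x^2\,\ee^{|x|}$ (valid for all $x\in\R$) converts it to a quadratic bound, $\Phi_1^*(\tilde\Lambda\,\ee^{-\tau})\leq \tfrac12\,\tilde\Lambda^2\ee^{-2\tau}\ee^{\lambda^*/k}$. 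Substituting and using $\tilde\Lambda^2=\Lambda^2/(k^2U^2)$ yields
\[
\Imix-\ee^\tau\Dreact
\leq \frac{\ee^{\lambda^*/k}}{2k}\,\ee^{-\tau}\int_\R\frac{\rho\,\Lambda^2}{U}\dd y.
\]

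\textbf{Main obstacle.} The right-hand side must now be bounded by $\mu_0\,\EB(\bfu\,|\,\bfU) + K_0$ (up to the common factor $\ee^{-\tau}$), but the naive pointwise estimate $\Lambda^2/U^2\leq(\lambda^*)^2$ leaves $\int_\R u\dd y=+\infty$ because of the non-trivial boundary values at $\pm\infty$. The decisive step is to separate the $\rho$-dependent part from the rest via the Fenchel--Young inequality for the convex-conjugate pair $\LB$ and $\LB^*(\xi)=\ee^\xi-1$: for every $A>0$ and all $\rho,c\geq 0$,
\[
\rho\,c \leq A\,\LB(\rho) + A\big(\ee^{c/A}-1\big).
\]
Applied pointwise with $A=(\lambda^*)^2$ and $c(y)=\Lambda(y)^2/U(y)^2\in[0,A]$, convexity of $t\mapsto\ee^t-1$ on $[0,1]$ together with $\ee-1<2$ gives $\ee^{c/A}-1\leq 2\,c/A$, hence $A(\ee^{c/A}-1)\leq 2c$. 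Multiplying by $U$, integrating, and using $\int_\R\LB(\rho)\,U\dd y\leq\EB(\bfu\,|\,\bfU)$ yields
\[
\int_\R\frac{\rho\,\Lambda^2}{U}\dd y
\leq (\lambda^*)^2\,\EB(\bfu\,|\,\bfU) + 2\int_\R\frac{\Lambda^2}{U}\dd y.
\]

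\textbf{Conclusion.} Substituting this back identifies the stated constants $\mu_0=(\lambda^*)^2\ee^{\lambda^*/k}/(2k)$ and $K_0=\ee^{\lambda^*/k}\int_\R\Lambda^2/U\dd y/k$. Finiteness of $K_0$ follows from the exponential decay of the Lagrange multiplier $\Lambda$ at $\pm\infty$ established in \cite{MieSch21?ESPS}, combined with the uniform lower bound $U\geq A_-^\beta>0$ recalled in Section \ref{su:RDSandProfile}.
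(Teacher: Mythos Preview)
Your proof is correct and follows essentially the same route as the paper: start from \eqref{eq:Estimate.Imix.Phi.n}, use the uniform bound $|\tilde\Lambda\,\ee^{-\tau}|\leq\lambda^*/k$ to turn $\Phi_1^*$ into a quadratic, and then absorb the $\rho$-dependence into the relative entropy. The only difference is in that last step: where you invoke Fenchel--Young for $\LB$ together with the chord estimate $\ee^t-1\leq 2t$ on $[0,1]$, the paper simply uses the pointwise inequality $\rho\leq\LB(\rho)+2$ and then bounds $\Lambda^2/U^2\leq(\lambda^*)^2$; both routes produce the identical constants $\mu_0$ and $K_0$.
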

\begin{proof}
As before we start from \eqref{eq:Estimate.Imix.Phi.n} and now need to estimate
$\Phi_1^*(\tilde\Lambda \, \rho^0\, \ee^{-\tau})$. The decisive advantage is that
$\rho^0 = 1$ provides automatically a bound independently of $\rho$, and the
exponential growth does not harm too much. 

Clearly, we have $|\tilde\Lambda(y)\ee^{-\tau}|\leq \lambda^*/k$ for all
$\tau\geq 0$ and $y\in \R$. Using $(\Phi_1^*)''(\xi)=\ee^{\xi}$ we obtain the
quadratic upper estimate
\[
 \Phi_1^*(\xi) \leq \frac{\ee^{\lambda^*/k}}2 \:\xi^2 \quad 
\text{for } |\xi|\leq \lambda^*/k.
\]
Inserting this into \eqref{eq:Estimate.Imix.Phi.n} first and using $\rho \leq
\LB(\rho){+}2$  we find
\begin{align*}
\Imix(\rho,\zeta)  - &\ee^\tau \Dreact(\rho,\zeta) 
 \leq  \int_\R \ee^\tau k (\rho U) \;\frac{\ee^{\lambda^*/k}}2\, \tilde
 \Lambda^2 \,\ee^{-2\tau}  \dd y \\
& \leq \ee^{-\tau} \:\frac{\ee^{\lambda^*/k}}{2k}\int_\R   \big(\LB(\rho){+} 2\big)
U \,\frac{\alpha^2\Lambda^2}{U^2} \dd y  \ \leq \ \ee^{-\tau}\,\mu_0\, \EB(\bfu\, | \,
\, \bfU) + K_0\ee^{-\tau}, 
\end{align*}
which is the desired result. \medskip
\end{proof}

\AAA The estimate in Lemma \ref{le:Control.Imix.n1} has rather large constants
because of the term $\ee^{\lambda^*/k}$. Since the linear case has many
applications, in particular as Kolmogorov forward equation for Markov processes,
we provide a better bound in Corollary \ref{co:Decay.Ep.al=be}. There we
replace the Boltzmann entropy $\EB=\calE_1$ by the relative entropy
$\calE_p$ with $p=1/2$, which gives exactly the Hellinger distance, see
\eqref{eq:Helli.calEp}. 
\EEE

As we have covered now all the cases $\alpha=\beta\geq 1$ we are now
ready to summarize which completes the proof of our main result.
\\[0.4em]
\begin{proof}[Proof of Theorem \ref{thm:specialCase}]
Abbreviate the relative Boltzmann entropy by 
$E(\tau):=  \EB ( \bfu (\tau) \, \vert \,  \bfU)$.
At first, Proposition \ref{prop:Dissipation} and the nonnegativity of the
Fisher information give
\[
 \frac{\d}{\d \tau}E = - \calD_\rmB 
 	\quad \text{with }\calD_\rmB \geq \frac12 E + \ee^\tau \, \Dreact   - \Imix.
\]
Since $\alpha {=} \beta$,  the mixed term reduces to
$\Imix (\rho, \zeta) = \int_\R \big( \zeta {-} \rho \big) \alpha \Lambda \dd y$.
In all three cases for $\alpha$ we have shown 
$\Imix- \ee^\tau \Dreact \leq \mu_j\ee^{-\tau}
E(\tau) + K_j \ee^{-\sigma_j \tau}$ with $\mu_2=0$. Inserting this we arrive exactly at
\eqref{eq:al=be.Estim}, and our result is established.
\end{proof}

\subsection{The case $\alpha = \beta$ with general entropies}
\label{su:al=be.GenEntropy}

While for the case $\alpha \neq \beta$ it is really necessary to take the
Boltzmann entropy, we have more flexibility with the choice of entropy
functions if $ \alpha = \beta$.  In this section, we choose general entropies
which will improve the results, in particular for the case $\alpha=\beta=1$.

Recall the family of entropy functions $F_p$ introduced in \eqref{eq:F.p} 
and consider $p \not \in \{0,1\}$ so that all the formulas are well-defined.
Of course, it is possible to consider the cases $p =0$ and $p =1$ by passing 
to the limit and use that $\lim_{p \to 0} \frac 1p (\zeta^p {-}1) = \log(\zeta)$.

We can define the relative entropy associated to the function $F_p$ by
\[
\calE_p (\bfu (\tau)\, \vert \, \bfU) := \int_\R U(y) F_p(\rho(y)) + V(y) F_p(\zeta(y)) \dd y 
\quad \text{ with } \rho := u/U \text{ and } \zeta := v/V.
\]
From Proposition \ref{prop:Diss.gen.Entr} we know
\[
 \frac{\d}{\d \tau} \calE_p (\bfu \, \vert \, \bfU) = - \calD_p (\rho,\zeta)
 \leq - \frac12 \calE_p (\bfu \, \vert \, \bfU) 
 	- \ee^\tau \Dpreact (\rho,\zeta) + \Ipmix(\rho,\zeta).
\]
Thus, the strategy is to estimate the difference $\Ipmix - \ee^\tau \Dpreact$, like in 
the previous section.
We have 
\begin{align*}
\Ipmix(\rho,\zeta) &- \ee^\tau \Dpreact(\rho,\zeta) = 
\int_\R \frac1p \big(  \zeta^p {-} \rho^p \big)\, \alpha \Lambda
- \ee^\tau  k  U^\alpha  \frac{\alpha}{p{-}1}(\zeta^{p{-}1} {-} \rho^{p{-}1})
( \zeta^\alpha {-} \rho^\alpha) \dd y \\
&= \int_\R \frac1p \rho^p \Big(  \frac{\zeta^p}{\rho^p} {-} 1 \Big)\, \alpha \Lambda
- \ee^\tau  k  U^\alpha  \frac{\alpha}{p{-}1}
	 \rho^{\alpha{+}p{-}1} \Big(\frac{\zeta^{p{-}1}}{\rho^{p{-}1}} {-} 1 \Big)
\Big( \frac{\zeta^\alpha}{\rho^\alpha} {-} 1\Big) \dd y.
\end{align*}
Defining $z:=  \frac{\zeta^p}{\rho^p} - 1 > -1$ as an auxiliary variable and
the following two-parameter family of functions
\begin{equation} \label{eq:Phi.p.alpha}
\Phi_{p,\alpha}(z):= \begin{cases}  
 \frac{\alpha}{p{-}1} \; \big((z{+}1)^{\frac{p{-}1}{p}} {-}1 \big) 
 \big((z{+}1)^{\frac \alpha p} {-}1 \big)
	&\text{for } z > -1,\\
	+\infty &\text{for } z \leq -1.
\end{cases}
\end{equation}
as a generalization of \eqref{eq:Phi.alpha}, this yields
\begin{align}
  \label{eq:Est.Imix.Phi.p.alpha}
& \Ipmix  (\rho,\zeta) - \ee^\tau  \Dpreact(\rho,\zeta) 
\leq \int_\R \frac1p \rho^p \, \alpha \Lambda z
- \ee^\tau  k  U^\alpha  \rho^{\alpha{+}p{-}1} \,\Phi_{p,\alpha}(z) \dd y \notag \\
&\leq  \int_\R \ee^\tau k U^\alpha  \rho^{\alpha{+}p{-}1} \,
 \Phi_{p,\alpha}^*\big(\tfrac1p  \tilde \Lambda \rho^{1{-}\alpha} \ee^{-\tau} \big) \dd y
 \; \text{ with } \tilde \Lambda(y):= \frac{\alpha \Lambda(y)}{k U^\alpha(y)}
 = \frac{(d_2{-}d_1)}{2k} \frac{U''(y)}{U(y)^\alpha} .
\end{align}

From our theory above, we know that it is advantageous to estimate
$\Phi_{p,\alpha}$ from below by a \AAA multiple of $ z^2$, \EEE because then
$\Phi^*_{p,\alpha}$ has a quadratic upper bound. Hence, we prepare the following result.

\begin{lemma}[Quadratic bound for $\Phi^*_{p,\alpha}$]
\label{le:QuadraticBd}
For $\alpha>0$ and $p \in \big( 0,\max\{\alpha/2,\alpha{-}1\}\big]$ we have
\[
\wh M_{p,\alpha} := \sup\Bigset{ \frac{\alpha^2}{4p^2} \frac{z^2}{\Phi_{p,\alpha}(z)} }{ z\in
    \R\setminus\{0\} } <\infty
\]
and the quadratic upper bound $\Phi^*_{p,\alpha}(\zeta) \leq \AAA \wh M_{p,\alpha}
\big(\frac p\alpha \,\zeta\big)^2$ for all $\zeta\in \R$. 

\AAA In the given range we always have $\wh M_{p,\alpha}\geq 1/4$. 

For $p=1/2$ and $\alpha\geq 1$ we have
\AAA $\wh M_{1/2,\alpha} \leq \alpha/2$ and $\wh M_{1/2,1}=1/2$. 

For $p>0$ and $\alpha=p{+}1$ we have $\wh M_{p,p+1}= 1/4$. 
\end{lemma}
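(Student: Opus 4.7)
The plan is to establish a quadratic lower bound $\Phi_{p,\alpha}(z)\geq c\,z^2$ on $(-1,\infty)$ and then pass to the Legendre transform. Setting $w=z+1>0$, I first expand at $w=1$ via $(w^c-1)=c(w-1)+O((w-1)^2)$ applied to the two exponents $c=(p-1)/p$ and $c=\alpha/p$, giving
\[
\Phi_{p,\alpha}(z) \;=\; \frac{\alpha}{p-1}\cdot\frac{p-1}{p}\cdot\frac{\alpha}{p}\,z^2+O(z^3) \;=\; \frac{\alpha^2}{p^2}\,z^2+O(z^3) \qquad \text{as } z\to 0.
\]
Hence $\tfrac{\alpha^2}{4p^2}z^2/\Phi_{p,\alpha}(z)\to 1/4$, which already produces the universal lower bound $\wh M_{p,\alpha}\geq 1/4$.

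Second, I would verify that $R(z):=z^2/\Phi_{p,\alpha}(z)$ is bounded on $(-1,\infty)\setminus\{0\}$ under the stated range of $p$. Since $\Phi_{p,\alpha}$ is continuous and strictly positive off $z=0$, only the endpoints $w\to 0^+$ and $w\to\infty$ matter. Distinguishing $p<1$ and $p>1$ at $w\to 0^+$, one checks $\Phi_{p,\alpha}(z)\to+\infty$ in the first case and $\to \alpha/(p-1)>0$ in the second, so $R$ stays bounded. As $w\to\infty$, the leading order of $\Phi_{p,\alpha}$ is a positive multiple of $w^{(p-1+\alpha)/p}$ when $p>1$ and of $w^{\alpha/p}$ when $p<1$; the requirement that $R\sim w^{2-\cdot}$ be bounded then forces $\alpha\geq p+1$ or $\alpha\geq 2p$, respectively. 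The union of these two admissible ranges is exactly $p\in(0,\max\{\alpha/2,\alpha-1\}]$, and gives $\wh M_{p,\alpha}<\infty$.

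Third, from the pointwise inequality $\Phi_{p,\alpha}(z)\geq \tfrac{\alpha^2}{4p^2\wh M_{p,\alpha}}z^2$ and the Fenchel--Young bound $\xi z\leq Az^2+\xi^2/(4A)$, taking the supremum over $z$ immediately yields $\Phi_{p,\alpha}^*(\xi)\leq \wh M_{p,\alpha}(p\xi/\alpha)^2$ for all $\xi\in\R$. For the special case $p=1/2$, $\alpha\geq 1$, I would combine the explicit formula $\Phi_{1/2,\alpha}(z)=\frac{2\alpha z}{z+1}\big((z+1)^{2\alpha}-1\big)$ with the elementary estimates $w^{2\alpha}-1\geq (w-1)(w+1)$ for $w\geq 1$ and $1-w^{2\alpha}\geq (1-w)(1+w)$ for $0<w<1$ (both consequences of $\alpha\geq 1$) to obtain $\wh M_{1/2,\alpha}\leq \alpha/2$; the subcase $\alpha=1$ collapses further to the monotone ratio $(z+1)/(2(z+2))$, whose supremum is exactly $1/2$. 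For $\alpha=p+1$, $p>0$, the matching Taylor coefficient $(p+1)^2/p^2=\alpha^2/p^2$ from above means that $\wh M_{p,p+1}=1/4$ will follow from the sharp pointwise bound $\Phi_{p,p+1}(z)\geq \frac{(p+1)^2}{p^2}z^2$, equivalently $(w^a-1)(w^b-1)\geq ab(w-1)^2$ with $a=(p-1)/p$, $b=(p+1)/p$ and $a+b=2$.

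The main obstacle is this last pointwise inequality, which must hold for both signs of $ab$ (that is, for $p>1$ and for $p<1$). I would prove it via the integral representation $w^c-1=c\int_1^w s^{c-1}\,ds$, which yields
\[
\frac{(w^a-1)(w^b-1)}{ab}\;=\;\iint_{D}s^{a-1}t^{b-1}\,ds\,dt, \qquad D=[\min(1,w),\max(1,w)]^2,
\]
since for $0<w<1$ the sign of the factor $ab$ is absorbed into reversing both limits of integration. Using the symmetry $s\leftrightarrow t$ of the square $D$ together with the AM--GM identity $\tfrac12(s^{a-1}t^{b-1}+s^{b-1}t^{a-1})\geq (st)^{(a+b)/2-1}=1$ (valid precisely because $a+b=2$), one gets the integrand-level bound $\iint_D s^{a-1}t^{b-1}\,ds\,dt\geq \iint_D 1\,ds\,dt=(w-1)^2$, which is the desired quadratic estimate.
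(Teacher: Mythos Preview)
Your argument is correct. For the finiteness of $\wh M_{p,\alpha}$, the lower bound $\wh M_{p,\alpha}\geq 1/4$ via Taylor expansion, the passage to $\Phi_{p,\alpha}^*$, and the case $p=1/2$, you follow the paper essentially verbatim. The genuinely different part is the case $\alpha=p{+}1$: the paper factors $\Phi_{p,p+1}(z)=\frac{(p{+}1)^2}{p^2}\,G_\lambda(w)$ with $\lambda=1/p$, $w=z{+}1$, and $G_\lambda(w)=\frac1{1-\lambda^2}(w^2-w^{1-\lambda}-w^{1+\lambda}+1)$, then computes $G_\lambda'''(w)=\lambda(w^{\lambda-2}-w^{-\lambda-2})$ to conclude that $G_\lambda''$ attains its minimum $G_\lambda''(1)=2$, whence $G_\lambda(w)\geq (w{-}1)^2$. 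Your integral representation combined with the symmetrized AM--GM step is more elementary and in fact yields the cleaner statement $\frac{(w^a-1)(w^b-1)}{ab}\geq (w{-}1)^2$ for \emph{any} real $a,b$ with $a+b=2$ and $ab\neq 0$, without differentiating.

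Two minor expository slips to clean up. First, the inequality you label as ``equivalently $(w^a{-}1)(w^b{-}1)\geq ab(w{-}1)^2$'' reverses direction when $ab<0$ (i.e.\ for $0<p<1$); the sign-robust form is precisely the ratio inequality $\frac{(w^a-1)(w^b-1)}{ab}\geq (w{-}1)^2$ that your double integral actually proves, and from it $\Phi_{p,p+1}(z)\geq \frac{(p+1)^2}{p^2}z^2$ follows in both sign regimes because $\frac{p+1}{p-1}\cdot ab=\frac{(p+1)^2}{p^2}>0$. Second, the sentence ``the sign of the factor $ab$ is absorbed into reversing both limits'' is misplaced: the identity $\frac{(w^a-1)(w^b-1)}{ab}=\iint_D s^{a-1}t^{b-1}\,ds\,dt$ for $0<w<1$ comes from reversing the two one-dimensional limits (two minus signs cancel) and has nothing to do with the sign of $ab$.
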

\begin{proof}
We fix a pair $(\alpha,p)$ in the given range and set
$f(z)=z^2/\Phi_{p,\alpha}(z)$ for $z\in (-1,0)\cup (0,\infty)$.

We first observe that $z\mapsto  \Phi_{p,\alpha}(z)$ behaves like $z^2$ for
$z\approx 0$. Hence, $f$ is bounded near $z=0$. Moreover, $ \Phi_{p,\alpha}$ is
bounded from below on intervals $(-1,-1{+}\delta)$ for small $\delta$. Since
$\Phi_{p,\alpha}$ is analytic in $(-1,0)$ and in $(0,\infty)$, the same is true
for $f$. Thus, $f$ is bounded on the interval $(-1,R)$ for all $R>0$. 
To obtain boundedness of $f$ it suffices to study its polynomial growth. Indeed,
we have $f(z)\sim z^\gamma$ with $\gamma= 2- \max\{0,(p{-}1)/p\} - \alpha/p$. 
However, the range for $(\alpha,p)$ was chosen exactly such that $\gamma\leq
0$, \AAA hence $\wh M_{p,\alpha}$ is finite. \EEE 

From $\Phi_{p,\alpha}(z)\geq \AAA \big(\frac\alpha p\, z\big)^2 / 
(4\wh M_{p,\alpha})$  we obtain \AAA
$\Phi_{p,\alpha}^*(\zeta) \leq \wh M_{p,\alpha} \big(\frac p\alpha
\,\zeta\big)^2$ by the properties of the Fenchel-Legendre transformation.
With $\Phi_{p,\alpha}(z)=\frac{\alpha^2}{p^2} z^2 + \mafo{h.o.t.}$, we
find $\wh M_{p,\alpha}\geq 1/4$. 

For the two explicit estimates we argue as follows. \EEE 
For $p=1/2$ and $\alpha\geq 1$ we have for all $z > -1$ the estimate \EEE
\begin{align}
\label{eq:Phi.1/2alpha}
 \Phi_{1/2, \alpha}(z)
 = 2\alpha \frac{z}{z{+}1} \big( (z{+}1)^{2\alpha} {-}1 \big)
 \geq  2\alpha  \frac{z}{z{+}1} \big( (z{+}1)^{2} {-}1 \big) 
 = \frac{2 \alpha z^2 (z{+}2)}{z{+}1} \geq 2 \alpha z^2,
\end{align}
\AAA where we use $(z{+}1)^{2\alpha}\geq (z{+}1)^2$ for $z>0$ and
$(z{+}1)^{2\alpha} \leq (z{+}1)^2$ for $z<0$. Thus, we have $\wh M_{1/2,\alpha}
\leq \alpha/2$. Using $\Phi_{1/2,\alpha}(z)=4\alpha^2 z^2 +\mafo{h.o.t.}$, we
obtain $\wh M_{1/2,\alpha}\geq 1/4$. For the case $\alpha=1$ we observe that
the first ``$\geq$'' in \eqref{eq:Phi.1/2alpha} is an equility, such that
$\Phi_{1/2,1}(z) \geq 2z^2$ is optimal, and $\wh M_{1/2,1}=1/2$ follows.

For the case $\alpha=p{+}1>1$ we set $\lambda = 1/p >0$ and
$w=z{+}1$. For $w>0$ we have
\begin{align*}
\Phi_{p,p+1}(z)& = \tfrac{p{+}1}{p{-}1}\big( w^{(p-1)/p}-1\big) 
\big( w^{(p+1)/p}-1\big)\\
& =\tfrac{(p{+}1)^2}{p^2} \,G_\lambda(w) \text{ with } 
G_\lambda(w)= \tfrac1{1{-}\lambda^2}\big( w^2-w^{1-\lambda} - w^{1+\lambda} + 1\big).
\end{align*}
Calculating the first three derivatives of $G_\lambda$, we find
$G'''_\lambda(w) = \lambda(w^{\lambda-2}- w^{-\lambda-2})$ and conclude
$G''_\lambda(w) \geq G''_\lambda(1)=2$. This implies $G_\lambda(w)\geq
(w{-}1)^2$ and hence $\Phi_{p,p+1}(z) \geq \frac{(p{+}1)^2}{p^2}\,z^2$. This
provides $\wh M_{p,p+1}= 1/4$, because optimality follows by taking $z\to 0$. 
\end{proof}

Using this estimate we can now estimate the relative entropies with $F_p$
instead of $\LB$, the technique being exactly the same as above.

\begin{theorem}[Exponential decay of $\calE_p(\bfu(\tau))$] 
\label{thm:Conv.E12}
For $\alpha\geq 1$ choose any $p>0$ with
$ \alpha{-}1 \leq p \leq \max\{\alpha/2, \alpha{-}1\}$ and define the relative
entropy $E_p(\tau) := \calE_{p}(\bfu(\tau) \, \vert \, \bfU)$ for the \AAA
similarity \EEE profile $\bfU=(U,V)^\top$ satisfying
\eqref{eq:Profile.specialCase}.  Then, all solutions $\bfu = (u,v)^\top$ of
the scaled system \eqref{eq:RDS.specialCase} with $E_{p}(0) <\infty$
satisfy the estimate
\begin{align*}
&\frac{\rmd}{\rmd\tau} E_p(\tau) \leq - \Big( \frac12 - \wt
\mu_{p,\alpha}\ee^{-\tau} \Big) E_p(\tau) + \wt K_{p,\alpha} \ee^{-\tau}, 
\end{align*}
where the constants $\wt\mu_{p,\alpha}$ and $\wt K_{p,\alpha}$ are given by
\begin{align*}
  &\text{for }2\leq \alpha=p+1:\quad \AAA \wt\mu_{\alpha-1,\alpha}=0 \text{ and } 
  \wt K_{\alpha-1,\alpha }= \frac{1}{4\,k}  \int_\R
  \frac{\Lambda^2}{U^\alpha} \dd y ,
  \\
  &\text{for }1\leq \alpha <2:\quad \wt\mu_{p,\alpha}=
  \AAA  \frac{\kappa}{k} \,\wh M_{p,\alpha} \EEE \Big\Vert 
  \frac{\Lambda^2 }{ U^{\alpha+1} } \Big\Vert_{\rmL^\infty} \text{ and } 
  \wt K_{p,\alpha}  =  \AAA \frac{1}{ k} \,\wh M_{p,\alpha} \EEE \big(\kappa
  F_p^*(\kappa){+}1\big) \int_\R \frac{\Lambda^2}{U^\alpha} \dd y,
\end{align*}
where $\kappa = \sqrt{1{+}p{-}\alpha}$. 
\end{theorem}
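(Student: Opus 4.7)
The plan is to start from the entropy--dissipation identity provided by Proposition \ref{prop:Diss.gen.Entr} and discard the non-negative Fisher information $\IpFisher$, giving the differential inequality
\[
\frac{\rmd}{\rmd\tau}E_p(\tau) \;\leq\; -\tfrac12\, E_p(\tau) \;+\; \bigl(\Ipmix(\rho,\zeta) - \ee^\tau\Dpreact(\rho,\zeta)\bigr).
\]
The entire task is then to control the bracketed term. Starting from the already-prepared Legendre-transform bound \eqref{eq:Est.Imix.Phi.p.alpha} and applying the quadratic estimate $\Phi^*_{p,\alpha}(\xi)\leq \wh M_{p,\alpha}\,(p\xi/\alpha)^2$ from Lemma \ref{le:QuadraticBd} (which is available precisely in the admissible range $\alpha-1\leq p\leq \max\{\alpha/2,\alpha-1\}$), the dependence on $\tilde\Lambda$ is replaced by $\Lambda$ via $\tilde\Lambda = \alpha\Lambda/(kU^\alpha)$, and after collecting powers of $\rho$ and $\ee^{-\tau}$ one obtains the clean inequality
\[
\Ipmix(\rho,\zeta) - \ee^\tau\Dpreact(\rho,\zeta) \;\leq\; \frac{\wh M_{p,\alpha}}{k}\, \ee^{-\tau}\int_\R \frac{\Lambda(y)^2}{U(y)^{\alpha}}\,\rho(y)^{\kappa^2}\,\rmd y,
\]
where $\kappa^2 = 1+p-\alpha\in[0,1]$. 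The exponent $\kappa^2$ is non-negative because of the assumption $p\geq\alpha-1$, and bounded by $1$ because of $p\leq\max\{\alpha/2,\alpha-1\}$.

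For the critical endpoint case $\alpha=p+1\geq 2$ we have $\kappa=0$, so $\rho^{\kappa^2}\equiv 1$ and the integral is simply $\int_\R \Lambda^2/U^\alpha\rmd y$. Combined with the explicit value $\wh M_{p,p+1}=1/4$ from Lemma \ref{le:QuadraticBd}, this gives $\Ipmix-\ee^\tau\Dpreact\leq \wt K_{\alpha-1,\alpha}\ee^{-\tau}$ with $\wt\mu_{\alpha-1,\alpha}=0$, which is exactly the first asserted constant.

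For the remaining range $1\leq\alpha<2$ with $\kappa^2\in(0,1]$ the difficulty is that the pointwise factor $\rho^{\kappa^2}$ cannot be discarded uniformly; it must be paid for partly by the relative entropy $E_p$ and partly by a constant. The key step is a two-stage Young-type reduction: first concavity of $t\mapsto t^{\kappa^2}$ with tangent at $t=1$ gives $\rho^{\kappa^2}\leq 1+\kappa^2\rho$; then the Fenchel--Young inequality $\kappa\rho\leq F_p(\rho)+F_p^*(\kappa)$ applied with the specific choice $\kappa=\sqrt{1+p-\alpha}$ (so that the extra $\kappa$ factor absorbs $\kappa^2=\kappa\cdot\kappa$) yields $\rho^{\kappa^2}\leq 1+\kappa F_p^*(\kappa)+\kappa F_p(\rho)$. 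Substituting this into the integral and splitting into its constant part and its $F_p(\rho)$-part, the former is bounded by $(1+\kappa F_p^*(\kappa))\int \Lambda^2/U^\alpha\rmd y$ and the latter, after pulling out $\|\Lambda^2/U^{\alpha+1}\|_{\rmL^\infty}$, by $\kappa\|\Lambda^2/U^{\alpha+1}\|_\infty\int U F_p(\rho)\rmd y\leq \kappa\|\Lambda^2/U^{\alpha+1}\|_\infty E_p$. Multiplying by the prefactor $\wh M_{p,\alpha}\ee^{-\tau}/k$ delivers precisely the constants $\wt\mu_{p,\alpha}$ and $\wt K_{p,\alpha}$ announced in the statement, and substituting back into the differential inequality completes the proof.

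The main obstacle I anticipate is calibrating the exponents and constants so that the specific form of $\wt K_{p,\alpha}$ (containing $\kappa F_p^*(\kappa)+1$ rather than $F_p^*(\kappa)/\kappa +1$) emerges; this is what forces the particular choice $\kappa=\sqrt{1+p-\alpha}$ together with the concavity splitting $\rho^{\kappa^2}\leq 1+\kappa^2\rho$, rather than a direct Fenchel--Young on $\rho^{\kappa^2}$ itself. Once this calibration is made, the rest is bookkeeping, and the admissibility range of $p$ is exactly what guarantees both $\wh M_{p,\alpha}<\infty$ and $\kappa^2\in[0,1]$ so that the concavity step applies.
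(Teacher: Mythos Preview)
Your proposal is correct and follows essentially the same route as the paper's proof: start from Proposition~\ref{prop:Diss.gen.Entr}, drop $\IpFisher$, apply the Legendre bound \eqref{eq:Est.Imix.Phi.p.alpha} together with the quadratic estimate of Lemma~\ref{le:QuadraticBd} to reduce to $\ee^{-\tau}\frac{\wh M_{p,\alpha}}{k}\int \frac{\Lambda^2}{U^\alpha}\rho^{1+p-\alpha}\,\rmd y$, and then handle $\rho^{\kappa^2}$ via the concavity bound $\rho^{\kappa^2}\leq \kappa^2\rho+(1{-}\kappa^2)\leq \kappa\cdot(\kappa\rho)+1$ followed by Fenchel--Young $\kappa\rho\leq F_p(\rho)+F_p^*(\kappa)$. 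One cosmetic remark: the tangent line to $t\mapsto t^{\kappa^2}$ at $t=1$ gives $\rho^{\kappa^2}\leq 1+\kappa^2(\rho{-}1)$, not $1+\kappa^2\rho$; your stated inequality is the (valid) weakening obtained by dropping the $-\kappa^2$ term, which is exactly what the argument needs.
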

\begin{proof}
We continue with estimate \eqref{eq:Est.Imix.Phi.p.alpha} and obtain 
\begin{align*}
& \Ipmix  (\rho,\zeta) - \ee^\tau  \Dpreact(\rho,\zeta) 
\leq \ee^{-\tau}\, \AAA \wh M_{p,\alpha}  \EEE\int_\R
\rho^{1+p-\alpha}\, \AAA \frac{\Lambda^2}{ k\,U^\alpha}  \EEE \dd y .
\end{align*}
For $\alpha\geq 2$ we have $p=\alpha{-}1$, and the integrand is independent of
$\rho$. Hence, the assertion is clear in this case. 

For $\alpha\in [1,2)$ we observe $\kappa=\sqrt{1{+}p{-}\alpha}\in (0,1]$ and 
estimate as follows:
\[
\AAA \rho^{1{+}p{-}\alpha} = \EEE \rho^{\kappa^2} \leq \kappa^2 \rho +(1{-}\kappa^2) \leq
\kappa\, \big(\kappa\, \rho\big) + 1 \leq \kappa\,\big(
F_p(\rho)+F_p^*(\kappa) \big) +1.
\]
With this we find 
\begin{align*}
& \Ipmix  (\rho,\zeta) - \ee^\tau  \Dpreact(\rho,\zeta) 
\leq \ee^{-\tau}\, \AAA \frac{\wh M_{p,\alpha}}{k} \EEE \bigg( \kappa
\big\|\frac{\Lambda^2}{ U^{\alpha+1}}\big\|_{\rmL^\infty} \calE_p(\bfu) + \big(
  \kappa F_p^*(\kappa){+}1\big) \int_\R\frac{\Lambda^2}{U^\alpha} \dd y\bigg),
\end{align*}
which gives the desired result for $\alpha\in [1,2)$. 
\end{proof}

\AAA The following corollary provides some natural consequences of the above
result. First, we show that for $\alpha \geq 3$ we again have exponential decay
like $\ee^{-\tau/2}$ if we use the relative entropy
$\calE_{\alpha-1}(\bfu|\bfU)$. This improves the result in Theorem
\ref{thm:specialCase}, where $\calE_1=\EB$ only decays like
$\ee^{-\tau/(\alpha-1)}$. Second, we show that for the linear case with
$\alpha=1$ we can significantly improve the constants $\mu_0$ and $K_0$
in Lemma \ref{le:Control.Imix.n1} by using $\calE_{1/2}$ instead of
$\calE_1=\EB$. 

\begin{corollary}[Decay of $\calE_p$ for $\alpha=\beta$]
\label{co:Decay.Ep.al=be} 
We have the following estimates:
\begin{subequations}
\label{eq:Ep.al=be}
\begin{align} 
\label{eq:Ep.al=be.ge2}
\text{For } \alpha\geq 2:\quad & \frac\rmd{\rmd \tau} \,\calE_{\alpha-1}(\bfu|\bfU) \leq -
\frac12 \, \calE_{\alpha-1}(\bfu|\bfU) + \ee^{-\tau} \int_\R
\frac{\Lambda^2}{4k\,U^\alpha} \dd y ,
\\
\label{eq:Ep.al=be=1}
\text{for } \alpha=1:\quad &   \frac\rmd{\rmd \tau} \,\calE_{1/2}(\bfu|\bfU) \leq -\big( 
\frac12- \wt \mu_*\big) \, \calE_{1/2}(\bfu|\bfU) + \ee^{-\tau} \int_\R
\frac{11{+}\sqrt2}{14\,k} \:\frac{\Lambda^2}U \dd y ,
\end{align} 
\end{subequations}
where $\wt\mu_*= \big\|\Lambda/U \big\|^2_{\rmL^\infty}/(k\sqrt8)$. 
\end{corollary}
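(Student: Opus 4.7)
The plan is to obtain both inequalities by specializing Theorem~\ref{thm:Conv.E12} to the two endpoints of the admissible range $[\alpha{-}1,\max\{\alpha/2,\alpha{-}1\}]$: I would choose $p=\alpha{-}1$ when $\alpha\geq 2$ and $p=1/2$ when $\alpha=1$. In both cases the work reduces to identifying the relevant constants $\wh M_{p,\alpha}$, $\kappa$ and, in the second case, computing one Legendre transform.

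For \eqref{eq:Ep.al=be.ge2} the choice $p=\alpha{-}1$ is forced because $\max\{\alpha/2,\alpha{-}1\}=\alpha{-}1$ for $\alpha\geq 2$. Then $\alpha=p{+}1$, so Lemma~\ref{le:QuadraticBd} gives $\wh M_{\alpha-1,\alpha}=1/4$, and $\kappa=\sqrt{1+p-\alpha}=0$. Consequently the first branch of Theorem~\ref{thm:Conv.E12} applies literally with $\wt\mu=0$ and $\wt K = \frac{1}{4k}\int_\R \Lambda^2/U^\alpha\dd y$, which is exactly \eqref{eq:Ep.al=be.ge2}.

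For \eqref{eq:Ep.al=be=1} I would set $\alpha=1$ and $p=1/2$, the upper endpoint of the admissible range $[0,1/2]$. Lemma~\ref{le:QuadraticBd} gives $\wh M_{1/2,1}=1/2$ and $\kappa=\sqrt{1/2}=1/\sqrt 2$, so the second branch of Theorem~\ref{thm:Conv.E12} yields $\wt\mu_{1/2,1}=\tfrac{\kappa}{k}\wh M_{1/2,1}\|\Lambda^2/U^2\|_{\rmL^\infty}=\tfrac{1}{2\sqrt 2\,k}\|\Lambda/U\|_{\rmL^\infty}^2$, matching $\wt\mu_*$ because $2\sqrt 2=\sqrt 8$.

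The only non-formal step is the explicit computation of $F_{1/2}^*(1/\sqrt 2)$ that enters $\wt K_{1/2,1}$. Since $F_{1/2}(z)=2(\sqrt z-1)^2$, I would parameterize by $w=\sqrt z$ and maximize $\kappa w^2-2(w{-}1)^2$ to find the maximizer $w=2/(2{-}\kappa)$ and hence $F_{1/2}^*(\kappa)=2\kappa/(2{-}\kappa)$ for $\kappa<2$. Plugging in $\kappa=1/\sqrt 2$ and rationalizing gives $\kappa F_{1/2}^*(\kappa)=\sqrt 2/(2\sqrt 2-1)=(4+\sqrt 2)/7$, so $\kappa F_{1/2}^*(\kappa)+1=(11+\sqrt 2)/7$. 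Multiplying by $\wh M_{1/2,1}=1/2$ produces the coefficient $(11+\sqrt 2)/14$ in \eqref{eq:Ep.al=be=1}. No real obstacle arises: the corollary is a crystallization of Theorem~\ref{thm:Conv.E12} at the two extremes of its parameter range, its content being that (i) choosing $p=\alpha{-}1$ upgrades the slow decay $\ee^{-\tau/(\alpha-1)}$ of $\EB=\calE_1$ from Theorem~\ref{thm:specialCase} to the optimal $\ee^{-\tau/2}$ for the higher-order entropy $\calE_{\alpha-1}$, and (ii) for the linear case $\alpha=1$ the entropy $\calE_{1/2}$ (the Hellinger functional) gives much better constants than $\EB$ thanks to $\wh M_{1/2,1}=1/2$ replacing the exponential factor $\ee^{\lambda^*/k}$ from Lemma~\ref{le:Control.Imix.n1}.
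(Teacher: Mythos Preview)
Your proposal is correct and follows exactly the paper's route: specialize Theorem~\ref{thm:Conv.E12} to $p=\alpha{-}1$ for $\alpha\geq 2$ and to $p=1/2$ for $\alpha=1$, using $\wh M_{\alpha-1,\alpha}=1/4$, $\wh M_{1/2,1}=1/2$, $\kappa=1/\sqrt2$, and $F_{1/2}^*(\zeta)=2\zeta/(2{-}\zeta)$. Your explicit derivation of $F_{1/2}^*$ and the rationalization producing $(11{+}\sqrt2)/14$ are more detailed than the paper's one-line proof, but the argument is identical; the only tacit step (in both) is the trivial weakening $\ee^{-\tau}\leq 1$ that turns $-(\tfrac12-\wt\mu_{1/2,1}\ee^{-\tau})$ into $-(\tfrac12-\wt\mu_*)$ in \eqref{eq:Ep.al=be=1}.
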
 
\begin{proof}
The estimate in \eqref{eq:Ep.al=be.ge2} is a simple rewriting of the
corresponding case in Theorem \ref{thm:Conv.E12}. 
The second estimate in \eqref{eq:Ep.al=be=1} follows similarly from the case $\alpha=1$
and $p=1/2$ by observing $\kappa=\sqrt{1+p-\alpha}=1/\sqrt{2} $ and $F^*_{1/2}
(\zeta) = 2\zeta/(2{-}\zeta)$. 
\end{proof} 
\EEE

\section{Convergence for the case $\alpha >\beta \geq 1$}
\label{se:GeneralCase}

In this section, we consider the case $\alpha \neq\beta$. Without loss of
generality, we assume that $\alpha >\beta$. We aim to show that solutions to
the Cauchy problem \eqref{eq:transRDS},\eqref{eq:transRDS.BC} converge to the
similarity profile $\bfU =(U,V)^\top$ characterized by the equations
\eqref{eq:SelfSimProfile}, but in contrast to the special case $\alpha=\beta$,
there is no meaningful possibility to simplify the mixed term
$\Imix(\rho,\zeta) = \int_\R \big((\zeta {-} 1 ) \beta {-} ( \rho {-} 1 )
\alpha \big) \Lambda \dd y$.  Let us recall that in this general setting,
$\Lambda$ is given as
\[
\Lambda = \frac{1}{\beta} (d_2 V'' + \frac{y}{2} V')
 = - \frac{1}{\alpha} (d_1 U'' + \frac{y}{2} U') ,
\]
where the last equality follows by \eqref{eq:SelfSimProfile}.  As in Section
\ref{su:al=be.Boltz}, we need to control the mixed term $\Imix$ in order to
estimate the dissipation functional $\calD_\rmB$ from Proposition
\ref{prop:Dissipation}.  The difference to the  case $\alpha=\beta$ is that the mixed
term $\Imix$ cannot be estimated with the reactive dissipation term
$-\ee^\tau \calD_\mathrm{react}$ alone, but we need to steal parts of the bonus
factor $1/2$. More precisely, we do the following
\begin{align}
\Imix (\rho,\zeta) 
&= \int_\R \big((\zeta {-} 1 ) \beta {-} ( \rho {-} 1 ) \alpha \big) \Lambda  \dd y \notag \\
&= \int_\R  
	 \big( \Psi(\zeta^\beta) {-}  \Psi(\rho^\alpha) \big)\Lambda  \dd y 
+  \int_\R \Big(  \big(\zeta{-}1 {-} \frac{1}{\beta} \Psi(\zeta^\beta)\big) \beta 
-  \big(\rho{-}1 {-} \frac{1}{\alpha} \Psi(\rho^\alpha) \big)\alpha
    \Big) \Lambda \dd y 
 \notag  \\
&=: \ImOne(\rho, \zeta) + \ImSec(\rho, \zeta) 
 \label{eq:Split.I.mix}
\end{align}
for a suitable function $\Psi$ that will be selected below.
That is, we split the mixed term into the two terms $\ImOne$ and 
$\ImSec$ with the strategy to estimate them separately in the
 following way:
\begin{enumerate}
\item We aim to control the integral term $\ImOne$ with 
$-\ee^\tau \calD_\mathrm{react}$ in a similar way as it is done in Section
 \ref{se:SpecialCase}.
\item The integral term $\ImSec$ will be estimated exploiting 
$-\tfrac12 \EB$. To this end, we need to choose the function $\Psi$ in such a way
 that we can globally estimate
\begin{align}
\label{eq:Psi.glEstimate}
\alpha \,\big\vert \rho {-}1 {-} \frac{1}{\alpha} \Psi(\rho^\alpha)  \big\vert  
	\leq C_{\alpha}  \LB(\rho) \quad 
\text{ and } \quad \beta \, \big\vert \zeta{-}1 {-} \frac{1}{\beta} \Psi(\zeta^\beta) \big \vert 
	\leq C_{\beta}  \LB(\zeta),
\end{align}
for constants $C_{\alpha}, C_{\beta} \geq 0$.  Further, the profile functions
$U$ and $V$ have to satisfy $\Lambda /U \in L^{\infty}(\R)$ and
$\Lambda /V \in L^{\infty}(\R)$.  Even more, we need
\begin{equation*}
\max \Big\{ C_{\alpha} \Vert \Lambda /U \Vert_\infty, 
C_{\beta}  \Vert \Lambda /V \Vert_\infty\Big\} < 1/2,
\end{equation*}
where $1/2$ is the bonus factor. This can be achieved if $A_-$ and $A_+$ are
close enough to each other so that the profiles are flat and hence
$\|\Lambda\|_\infty$ is small enough, we refer to
\cite[Rem.\,5.1]{MieSch21?ESPS}.
\end{enumerate}
The considerations above lead to the function 
\begin{equation*}
\Psi(r) := \max \{\alpha,\beta\} (r^{1/ \max \{\alpha,\beta\}} {-}1) 
  = \alpha(r^{1/\alpha} {-}1).
\end{equation*}
First, it satisfies $\Psi(1) = 0$ and $\Psi'(1) = 1$ for all $\alpha$, which is
a necessary condition for the inequalities \eqref{eq:Psi.glEstimate}.  Second,
it allows us to use the same technique with the Legendre transform in
order to control $\Psi(\zeta^\beta) {-} \Psi(\rho^\alpha)$ through
$\Gamma(\rho^\alpha,\zeta^\beta)$, as we will see later.
Let us start considering point 2. 
If we insert the ansatz for $\Psi$, we obtain
\begin{align*}
\alpha \, \big\vert \rho {-}1 {-} \frac{1}{\alpha} \Psi(\rho^\alpha)  \big\vert  \equiv 0 
\quad \text{ and } 
\beta \, \big\vert \zeta{-}1 {-} \frac{1}{\beta} \Psi(\zeta^\beta) \big \vert 
	=  \Big(\beta{-} \frac{\beta^2}{\alpha}\Big) F_{\beta/{\alpha}}(\zeta)
  	 \leq (\alpha {-}\beta) \LB (\zeta),
\end{align*}
where $F_{\beta/{\alpha}}$ denotes the entropy function defined in \eqref{eq:F.p} for 
$p = \beta/{\alpha}$ so that we can use the property 
$F_p (z) \leq \frac{1}{p} F_1(z) = \frac{1}{p} \LB(z)$, see \eqref{eq:Fp.equiv}.
With this, the following lemma is established.
\begin{lemma} \label{le:Est.Imix2}
For all $\alpha > \beta \geq 1$,  
the second addend $\ImSec$ of the mixed term $\Imix$ can be bounded by
\[
\ImSec(\rho,\zeta)  \leq  \theta \; \EB ( \bfu \, \vert \,  \bfU) , \quad \text{ with } \quad
\theta = (\alpha {-} \beta) \,
\Vert \Lambda / V \Vert_{\infty}.
\]
\end{lemma}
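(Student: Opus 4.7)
The proof will essentially be an algebraic verification exploiting the specific choice $\Psi(r) = \alpha(r^{1/\alpha}{-}1)$ that has been announced in the text just before the lemma. The plan is to first reduce $\ImSec$ to a single integrand involving $\zeta$ only, then identify this integrand with a multiple of the entropy function $F_{\beta/\alpha}$, and finally apply the comparison \eqref{eq:Fp.equiv} between $F_p$ and $\LB$ together with the uniform bound on $\Lambda/V$.

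More concretely, inserting $\Psi(\rho^\alpha) = \alpha(\rho{-}1)$ into the bracket for the $\rho$-term shows that $\alpha(\rho{-}1) - \Psi(\rho^\alpha) \equiv 0$, so the $\rho$-part of $\ImSec$ vanishes identically. For the $\zeta$-term a short computation yields
\[
\beta(\zeta{-}1) - \Psi(\zeta^\beta)
= \beta(\zeta{-}1) - \alpha(\zeta^{\beta/\alpha}{-}1)
= \beta\Big(1-\frac{\beta}{\alpha}\Big) F_{\beta/\alpha}(\zeta),
\]
as follows from the defining identity $p(p{-}1) F_p(\zeta)=\zeta^p-p\zeta+p-1$ with $p=\beta/\alpha \in (0,1)$. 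Note that the prefactor $\beta(1-\beta/\alpha)=\beta(\alpha{-}\beta)/\alpha$ is non-negative and that $F_{\beta/\alpha}(\zeta)\geq 0$.

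Hence
\[
\ImSec(\rho,\zeta)=\int_\R \beta\Big(1-\frac{\beta}{\alpha}\Big) F_{\beta/\alpha}(\zeta)\,\Lambda\dd y,
\]
which has no fixed sign because of $\Lambda$, but its absolute value is controlled by
\[
|\ImSec(\rho,\zeta)| \leq \beta\Big(1-\frac{\beta}{\alpha}\Big)\int_\R F_{\beta/\alpha}(\zeta)\,V\cdot \frac{|\Lambda|}{V}\dd y
\leq \beta\Big(1-\frac{\beta}{\alpha}\Big)\Big\|\frac{\Lambda}{V}\Big\|_\infty \int_\R F_{\beta/\alpha}(\zeta)\,V\dd y.
\]
Now apply the lower bound $F_{\beta/\alpha}(\zeta)\leq \frac{\alpha}{\beta}\,\LB(\zeta)$ from \eqref{eq:Fp.equiv} (valid since $\beta/\alpha\in(0,1)$), which gives $\beta(1-\beta/\alpha)F_{\beta/\alpha}(\zeta)\leq (\alpha{-}\beta)\LB(\zeta)$. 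Together with the trivial estimate $\int_\R \LB(\zeta)V\dd y \leq \EB(\bfu\,|\,\bfU)$ (discarding the non-negative $\LB(\rho)U$ part) one obtains $\ImSec(\rho,\zeta)\leq (\alpha{-}\beta)\|\Lambda/V\|_\infty \EB(\bfu\,|\,\bfU)=\theta\,\EB(\bfu\,|\,\bfU)$.

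There is no serious obstacle: the only ingredient that really has to be checked is the algebraic identity turning $\beta(\zeta{-}1)-\alpha(\zeta^{\beta/\alpha}{-}1)$ into $\beta(1-\beta/\alpha)F_{\beta/\alpha}(\zeta)$, and the fact that $\|\Lambda/V\|_\infty$ is finite, which is a given structural property of the similarity profile (used implicitly in condition (2) of the discussion preceding the lemma, and following from $V\geq A_-^\alpha>0$ together with boundedness of $\Lambda$).
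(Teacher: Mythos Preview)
Your proof is correct and follows essentially the same approach as the paper: the argument is precisely the computation displayed in the text immediately preceding the lemma (vanishing of the $\rho$-term, identification of the $\zeta$-term with $\beta(1-\beta/\alpha)F_{\beta/\alpha}(\zeta)$, and application of $F_p\leq \frac1p\LB$), supplemented by the obvious step of pulling out $\|\Lambda/V\|_\infty$ and bounding $\int_\R \LB(\zeta)V\dd y$ by $\EB(\bfu\,|\,\bfU)$. The only cosmetic slip is that you call the inequality from \eqref{eq:Fp.equiv} a ``lower bound'' while using it as an upper bound on $F_{\beta/\alpha}$; the direction you actually use, $F_p(\zeta)\leq \frac1p\LB(\zeta)$ for $p\in(0,1)$, is the correct one.
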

After the preliminary considerations, we can formulate the main theorem of this section.
\begin{theorem}[Convergence for $\alpha >\beta \geq 1$]\label{thm:generalCase}
  Consider the relative Boltzmann entropy
  $E(\tau) := \EB(\bfu (\tau) \, \vert \, \bfU)$ for the unique similarity
  profile $\bfU =(U,V)^\top$ which is characterized by \eqref{eq:SelfSimProfile}.
  Assume further that $\vert A_- {-}A_+ \vert$ is small enough such that
\[
\theta:= (\alpha {-} \beta) \,
\Vert \Lambda / V \Vert_{\infty} < 1/2.
\]
Then,  for all solutions $\bfu =(u,v)^\top$ of the Cauchy problem
 \eqref{eq:transRDS},\eqref{eq:transRDS.BC} with finite initial entropy $E(0) < \infty$,
the following differential inequalities hold true:
\begin{subequations}
\label{eq:al.ge.be.Estim}
\begin{align}
\label{eq:al.ge.be1-2.Estim}
&\text{For $\alpha \in (1,2)$, it holds}  
&& \dot E(\tau) 
\leq - \big(\frac12{-} \theta {-} \mu_1 \ee^{-\tau} \big) E(\tau) + K_1 \, \ee^{-\tau} 
\quad \text{ for all } \tau>0,\\
\label{eq:al.ge.be.ge2.Estim}
&\text{and if $\alpha \geq 2$, then }  
&& \dot E(\tau) 
\leq - \big(\frac12{-} \theta \big)  E(\tau) + K_2 \, \ee^{-\tau/(\alpha {-} 1)} \quad \text{ for all } \tau>0,
\end{align}
\end{subequations}
where $\mu_1,K_1$ and $K_2$ were already defined in Section \ref{su:al=be.Boltz}, in
Lemma \ref{le:Control.Imix.ngeq2} and \ref{le:Control.Imix.1n2}, respectively.
\end{theorem}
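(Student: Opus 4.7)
The plan is to combine the dissipation decomposition from Proposition \ref{prop:Dissipation} with the splitting \eqref{eq:Split.I.mix} and to show that, once $\ImSec$ is absorbed into the bonus term via Lemma \ref{le:Est.Imix2}, the remaining contribution $\ImOne-\ee^\tau\Dreact$ reduces structurally to the case $\alpha=\beta$ already treated in Section \ref{su:al=be.Boltz}. Dropping the nonnegative Fisher information $\IFisher$ in Proposition \ref{prop:Dissipation} yields
\[
\dot E(\tau)\ \leq\ -\tfrac12\, E(\tau) + \ImSec(\rho,\zeta) + \bigl(\ImOne(\rho,\zeta)-\ee^\tau\Dreact(\rho,\zeta)\bigr),
\]
and Lemma \ref{le:Est.Imix2} already gives $\ImSec\leq \theta\, E$. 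So everything hinges on estimating $\ImOne-\ee^\tau\Dreact$.

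The key observation is that the choice $\Psi(r)=\alpha(r^{1/\alpha}-1)$ is matched to the mass-action dissipation in a very clean way. With the auxiliary variable
\[
s\ :=\ \bigl(\zeta^\beta/\rho^\alpha\bigr)^{1/\alpha}-1\ >\ -1,
\]
one verifies directly the two algebraic identities
\[
\Psi(\zeta^\beta)-\Psi(\rho^\alpha)=\alpha\rho\, s,\qquad
\Gamma(\rho^\alpha,\zeta^\beta)=\rho^\alpha\,\Phi_\alpha(s),
\]
with $\Phi_\alpha$ as in \eqref{eq:Phi.alpha}. (For the first identity, write $\zeta^{\beta/\alpha}=(s{+}1)\rho$; for the second, set $w=\zeta^\beta/\rho^\alpha=(s{+}1)^\alpha$ so that $(w{-}1)\log w=\Phi_\alpha(s)$.) Consequently
\[
\ImOne-\ee^\tau\Dreact\ =\ \int_\R\Bigl(\alpha\Lambda\rho\, s-\ee^\tau k(\rho U)^\alpha\Phi_\alpha(s)\Bigr)\dd y,
\]
which is exactly the integrand structure appearing just before \eqref{eq:Estimate.Imix.Phi.n} in the special case $\alpha=\beta$.

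From here the argument is verbatim that of Section \ref{su:al=be.Boltz}. A pointwise Legendre estimate in the variable $s$ followed by the bounds on $\Phi_\alpha^*$ provided by Lemma \ref{le:phi.n.Legendre} produces, for $\alpha\geq 2$, the estimate $\ImOne-\ee^\tau\Dreact\leq K_2\,\ee^{-\tau/(\alpha-1)}$ exactly as in Lemma \ref{le:Control.Imix.ngeq2} (the $\rho$-factors cancel by the choice of exponent $\alpha/(\alpha{-}1)$), while for $\alpha\in(1,2)$ the mixed quadratic/superlinear bound produces $\ImOne-\ee^\tau\Dreact\leq \mu_1\,\ee^{-\tau}\,E(\tau)+K_1\,\ee^{-\tau}$ as in Lemma \ref{le:Control.Imix.1n2}, including the absorption of $\rho^{2-\alpha}$ via the bound $\rho^{2-\alpha}/(2\alpha)\leq \LB(\rho)+1$. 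Adding $\ImSec\leq\theta\, E$ to these two bounds yields exactly the differential inequalities \eqref{eq:al.ge.be1-2.Estim} and \eqref{eq:al.ge.be.ge2.Estim}. The genuinely new point, and the place where I expect the conceptual difficulty to sit, is not the Legendre step---which is recycled intact---but the identification of the splitting $\Psi(r)=\alpha(r^{1/\alpha}-1)$ that simultaneously (i) makes $\ImOne$ compatible with $\Dreact$ through $\Phi_\alpha$ and (ii) leaves a remainder $\ImSec$ controllable only by the relative entropy. Point (ii) forces the smallness hypothesis $\theta<1/2$, since $\ImSec$ must be absorbed into the bonus factor $\tfrac12\, E$ rather than into the reactive dissipation; this is the reason why in the case $\alpha>\beta$ one cannot treat arbitrary $|A_+{-}A_-|$ with the present method.
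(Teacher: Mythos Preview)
Your proof is correct and follows essentially the same route as the paper: drop $\IFisher$, split $\Imix=\ImOne+\ImSec$ with $\Psi(r)=\alpha(r^{1/\alpha}-1)$, absorb $\ImSec$ into the bonus term via Lemma \ref{le:Est.Imix2}, and reduce $\ImOne-\ee^\tau\Dreact$ to the integrand structure \eqref{eq:Estimate.Imix.Phi.n} through the substitution $s=\zeta^{\beta/\alpha}/\rho-1$ (your $s$ coincides with the paper's $z$ in \eqref{eq:Est.Imix1.phi*}), after which Lemmas \ref{le:Control.Imix.ngeq2} and \ref{le:Control.Imix.1n2} apply verbatim. Your commentary on why the smallness condition $\theta<1/2$ is forced is also exactly the paper's reasoning.
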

The proof can be found at the end of this section. Notice that the case $\alpha=1$ 
does not occur since $\alpha > \beta \geq 1$.
Moreover, we see that we obtain here the same constants $\mu_1,K_1$ and $K_2$ 
as in the special case $\alpha = \beta$. But one has to be careful, the constants only
coincide if $\Lambda$ is given in its general form, namely 
$\Lambda = \frac{1}{\beta} (d_2 V'' + \frac{y}{2} V')
 = - \frac{1}{\alpha} (d_1 U'' + \frac{y}{2} U')$, while in Section \ref{se:SpecialCase}
there is the possibility to simplify $\Lambda$ due to $\alpha = \beta$ and $U=V$.

The fact that they coincide, up to the possible simplification of $\Lambda$, 
already indicates that we can trace a part of the proof back to what we already 
did in Section \ref{se:SpecialCase}. 
In the following, we will find out how it works explicitly.

Thus, we turn our attention to point 1 of our strategy.
Luckily, we will see that the function 
$\Psi(r)  = \alpha \left(r^{1/{\alpha}}{-}1\right)$
is a well-working function for estimating $\ImOne$ as well. 
We aim to estimate the difference 
$\ImOne -\ee^\tau \calD_\mathrm{react}$ in a suitable way.
It holds
\begin{align*}
\ImOne(\rho, \zeta)  - \ee^\tau \calD_\mathrm{react}  (\rho,\zeta) 
&=  \int_\R \big(  \Psi(\zeta^\beta){-} \Psi(\rho^\alpha) \big) \Lambda
	-  \ee^\tau k U^\alpha \Gamma(\rho^\alpha, \zeta^\beta) \dd y \\
&=  \int_\R \alpha  \rho
	 \Big(\frac{\zeta^{\beta/{\alpha}}}{ \rho} {-} 1 \Big)  \Lambda
	 - \ee^\tau  k  U^\alpha \rho^\alpha\Big(\frac{\zeta^\beta}{\rho^\alpha} {-}1\Big)
	 \log(\zeta^\beta / \rho^\alpha) \dd y.
\end{align*}
Next, we take the very same function $\Phi_\alpha$ defined in \eqref{eq:Phi.alpha}.
However, in this general case we have to define 
$z:=\frac{\zeta^{\beta/{\alpha}}}{ \rho} -1 $ to ensure everything fits together. 
This yields
\begin{align}
\label{eq:Est.Imix1.phi*}
\ImOne & (\rho,\zeta)   -  \ee^{\tau}  \calD_\mathrm{react}(\rho,\zeta) 
 =  \int_\R \alpha \rho \Lambda   z 
	-  \ee^\tau \, k (U \rho)^\alpha \, \Phi_\alpha(z) \dd y \notag \\
&\leq 	 \int_\R   \ee^\tau \, k (U \rho)^\alpha  \;
	\Phi_\alpha^*\Big(\tilde \Lambda \rho^{1{-}\alpha} \ee^{-\tau} \Big) \dd y,
	\quad \text{ with } \quad 
 \tilde \Lambda:=  \frac{\alpha \Lambda}{ k U^\alpha} 
 = - \frac{d_1 U'' + \frac y2 U'}{k U^\alpha}.
\end{align}
Comparing this estimate above with that
 from Section \ref{se:SpecialCase}, we realize that we end up with the same
 inequality like in \eqref{eq:Estimate.Imix.Phi.n},
with the only difference that $\Lambda$ is in its general setting. With this, we can
 easily proof the following by replicating the steps of Lemma 
 \ref{le:Control.Imix.1n2} and \ref{le:Control.Imix.n1}.
\begin{lemma} \label{le:Est.Imix1}
Under the assumptions of Theorem \ref{thm:generalCase}, for the inequality
 \eqref{eq:Est.Imix1.phi*} the following bounds from below hold true
 for all $\tau >0$:
\begin{subequations}
\begin{align}
&\text{For $\alpha \in (1,2)$, we have}  
&& \ImOne - \ee^{\tau}  \calD_\mathrm{react}
\leq  \mu_1 \ee^{-\tau} \EB(\bfu \, \vert \, \bfU) + K_1 \, \ee^{-\tau} \\
&\text{and for $\alpha \geq 2$, it holds }  
&& \ImOne - \ee^{\tau}  \calD_\mathrm{react}
\leq  K_2 \, \ee^{-\tau/(\alpha {-} 1)},
\end{align}
\end{subequations}
with $\mu_1,K_1$ and $K_2$ precisely defined in
Lemma \ref{le:Control.Imix.1n2} and \ref{le:Control.Imix.n1}, respectively.
\end{lemma}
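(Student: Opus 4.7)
The starting point is the bound \eqref{eq:Est.Imix1.phi*}, which has exactly the same structural form as estimate \eqref{eq:Estimate.Imix.Phi.n} used in Section \ref{se:SpecialCase}; indeed, both read
\[
\ImOne(\rho,\zeta) - \ee^\tau \Dreact(\rho,\zeta) \leq \int_\R \ee^\tau \, k\,(U\rho)^\alpha \, \Phi_\alpha^{*}\!\bigl(\tilde\Lambda\,\rho^{1-\alpha}\,\ee^{-\tau}\bigr) \dd y,
\]
the only difference being that $\tilde\Lambda = \alpha\Lambda/(k U^\alpha)$ now refers to the Lagrange multiplier of the general profile equation \eqref{eq:SelfSimProfile} rather than the simplified expression \eqref{eq:Profile.specialCase}. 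Because all constants $\mu_1$, $K_1$, $K_2$ in Lemmas \ref{le:Control.Imix.ngeq2} and \ref{le:Control.Imix.1n2} are built from $\Lambda$ and $U$ in a way that makes sense for either setting, the plan is simply to replay those two proofs verbatim.

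For the case $\alpha \geq 2$, I would insert the bound $\Phi_\alpha^{*}(\xi) \leq \wt c_\alpha \, |\xi|^{\alpha/(\alpha-1)}$ from Lemma \ref{le:phi.n.Legendre}(3). The factors of $\rho$ cancel: $(U\rho)^\alpha \cdot \rho^{\alpha(1-\alpha)/(\alpha-1)} = U^\alpha$, while the time factor becomes $\ee^{\tau}\cdot \ee^{-\tau\alpha/(\alpha-1)} = \ee^{-\tau/(\alpha-1)}$. Pulling this out and using $|\tilde\Lambda|^{\alpha/(\alpha-1)} U^\alpha = |\alpha\Lambda/U|^{\alpha/(\alpha-1)}/k^{1/(\alpha-1)}$ yields exactly $K_2 \ee^{-\tau/(\alpha-1)}$ with $K_2$ as in Lemma \ref{le:Control.Imix.ngeq2}.

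For $\alpha \in (1,2)$ I would use $\Phi_\alpha^{*}(\xi) \leq \frac{1}{2\alpha}\xi^2 + \wt c_\alpha|\xi|^{\alpha/(\alpha-1)}$ coming from Lemma \ref{le:phi.n.Legendre}(2) together with $\max\{a,b\}\leq a+b$. Splitting accordingly, the second summand is handled exactly as in the case $\alpha\geq 2$ above and contributes $\ee^{-\tau/(\alpha-1)}\leq \ee^{-\tau}$ times the second piece of $K_1$. The quadratic summand gives $\ee^{-\tau}\int_\R \tfrac{\alpha^2\Lambda^2}{k\, U^{2-\alpha}}\,\tfrac{\rho^{2-\alpha}}{2\alpha}\dd y$, and applying $\rho^{2-\alpha}/(2\alpha) \leq \LB(\rho) + 1$ (valid for $\alpha \in [1,2]$) splits this into the entropy-controlled part, bounded by $\ee^{-\tau}\mu_1 \EB(\bfu\,|\,\bfU)$ via pulling out $\|\alpha^2\Lambda^2/U^{3-\alpha}\|_\infty$, and a constant part contributing the first piece of $K_1 \ee^{-\tau}$.

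Since the reduction is literally the same computation as before, there is no genuine obstacle; the only point requiring attention is to check that the assumption of Theorem \ref{thm:generalCase} (small $|A_+-A_-|$) does not enter here — indeed $K_1,K_2,\mu_1$ require only that $\Lambda/U$ and the relevant integrals $\int |\Lambda|^{p}/U^{q}\dd y$ are finite, which follows from the profile properties established in \cite{MieSch21?ESPS}; the flatness condition $\theta < 1/2$ is only needed later, in Lemma \ref{le:Est.Imix2}, to handle $\ImSec$.
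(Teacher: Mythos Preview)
Your proposal is correct and follows exactly the paper's approach: the paper itself states that the result follows by ``replicating the steps'' of Lemmas \ref{le:Control.Imix.ngeq2} and \ref{le:Control.Imix.1n2}, since \eqref{eq:Est.Imix1.phi*} coincides structurally with \eqref{eq:Estimate.Imix.Phi.n}. Your observation that the flatness condition $\theta<1/2$ plays no role here and is only needed for $\ImSec$ is also accurate.
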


We are now ready to prove the main theorem of this section.\\
\begin{proof}[Proof of Theorem \ref{thm:generalCase}]
Denote the relative Boltzmann entropy by 
$E(\tau):=  \EB ( \bfu (\tau) \, \vert \,  \bfU)$ and use Proposition 
\ref{prop:Dissipation} together with the nonnegativity of the
Fisher information which yield
\[
 \frac{\d}{\d \tau}E = - \calD_\rmB 
 \quad \text{with }\calD_\rmB \geq \frac12 E + \ee^\tau \, \Dreact   - \Imix.
\]
In this general setting, the mixed term is given by
$\Imix (\rho,\zeta) 
= \int_\R \big((\zeta {-} 1 ) \beta {-} ( \rho {-} 1 ) \alpha \big) \Lambda  \dd y $. 
So, in \eqref{eq:Split.I.mix} we split $\Imix$ into $\ImOne$ and $\ImSec$ and 
estimated both addends separately.
First, using Lemma \ref{le:Est.Imix2} yields
$\ImSec \leq \theta \, E(\tau)$
for all $\alpha > \beta \geq 1$. Second, Lemma \ref{le:Est.Imix1} provides
in both cases $\alpha \in (1,2)$ and $\alpha \geq 2$ that
$\ImOne - \ee^{\tau} \, \Dreact
\leq \mu_j\ee^{-\tau} E(\tau) + K_j \ee^{-\sigma_j \tau}$ for $j= 1,2$, with $\mu_2 =0$.
Inserting all estimates, we obtain \eqref{eq:al.ge.be.Estim} as claimed.
\end{proof}

This proof concludes the section and therefore also completes the new results 
we can formulate on the convergence towards similarity profiles. 
Through this, it provides a qualitative statement 
on the long-time behavior of solutions to the given reaction-diffusion system 
under the difficulty of considering 
the whole space, i.e. unbounded domains, and infinite mass.
In the current paper, new approaches are used which allow to treat these difficulties. 
In particular, this means the transformation of the system into parabolic scaling
variables, which generates the bonus factor $d/2$ and thus allows the 
energy-dissipation estimates (of Boltzmann type), that are well-studied 
on bounded domains, to be applied to the whole space.
In the case $\alpha \neq \beta$, we have seen that additional restrictions
must hold on the similarity profile, namely that $\vert A_- {-} A_+ \vert$ is 
sufficiently small.  The question, whether the convergence holds true for solutions 
with arbitrary boundary values, remains open.
Furthermore, the considered system, especially the reaction 
$\alpha X_1 \rightleftharpoons \beta X_2$, is still very simple. 
Perhaps, the methods presented above provide a starting point for more 
complicated systems admitting a family of steady-states.

\AAA 
Our approach based on energy-dissipation estimates seems to be flexible
enough to treat more complicated reaction-diffusion systems, where 
more species and more reactions can be involved. For the general setup we refer
to  \cite{MieSch21?ESPS} where the existence result for similarity profiles is
proved for these general situations. The main task is then the control of the mixed term
$\Imix$, which may now involve more than one Lagrange multiplier.\EEE

\appendix
\section{Appendices}
\label{se:Appendix}

In this appendix, we will give the omitted proofs of auxiliary results we used
throughout this paper.

\noindent
\begin{proof}[Proof of Lemma \ref{le:Gronwall}]
The given differential inequality is of the form
\[
\frac{\rmd}{\rmd \tau} E(\tau) \leq - a(\tau) \, E(\tau) + b(\tau) \quad 
\text{ for } a(\tau):= \eta - \mu \, \ee^{-\tau} \text{ and } b(\tau):= K \, \ee^{-\gamma \tau}.
\]
A direct application of the differential version of Gr\"onwall's lemma yields
\begin{equation} \label{eq:Est.E}
E(\tau) \leq E(0)\, \ee^{-\int_{0}^\tau a(r) \dd r} 
+ \int_{0}^\tau b(s) \, \ee^{-\int_{s}^\tau a(r) \dd r} \dd s.
\end{equation}
First, we estimate the integral $\int_{s}^\tau a(r) \dd r$ roughly but sufficiently as we 
will see. We have
\begin{align*}
-\int_{s}^\tau a(r) \dd r = \int_{s}^\tau   \mu \, \ee^{-r} {-} \eta \dd r 
=  \mu (\ee^{-s} {-} \ee^{-\tau}) - \eta (\tau {-}s)\leq  \mu -  \eta (\tau {-}s) 
\end{align*}
since $s \geq 0$. Inserting this into \eqref{eq:Est.E} and using the 
monotonicity of the exponential function gives
\begin{align*}
E(\tau) & \leq E(0)\, \ee^{\mu -  \eta \tau} 
+ K \, \ee^{\mu -\eta \tau} \int_{0}^\tau \, \ee^{(\eta {-} \gamma)s } \dd s 
= \ee^{\mu} \Big( E(0)\, \ee^{-  \eta \tau} 
+ \frac{K}{\eta {-} \gamma} \,  \big( \ee^{-\gamma \tau} {-} \ee^{-\eta \tau} \big) \Big) \\
& \leq \ee^{- \min \{\eta,\gamma \}\tau +  \mu} \Big( E(0)
+ \frac{2 K}{\vert \eta {-} \gamma \vert } \Big) \quad \text{ if } \eta \neq \gamma.
\end{align*}
In case $\eta = \gamma$, the integrand above is identically $1$, which gives the result.
\end{proof}

We now give the proof of Lemma \ref{le:phi.n.Legendre}, which provides the
necessary upper estimates of the functions $\Phi_\alpha$. 

\begin{proof}[Proof of Lemma \ref{le:phi.n.Legendre}]
Throughout we use $\alpha\geq 1$ and write 
$\Phi_\alpha =g_\alpha h_\alpha$ with
\[
g_\alpha (z) = \begin{cases} \big|\log(z{+}1)^\alpha\big|& \text{for }z>-1,\\ \infty
  &\text{for }z\leq -1, \end{cases}
\quad \text{ and} \quad 
h_\alpha(z)=  \begin{cases} \big|(z{+}1)^\alpha-1\big|& \text{for }z\geq -1,\\ \infty
  &\text{for } z < -1.  \end{cases}
\] 

To obtain upper bounds for $\Phi_\alpha^*$, we derive lower bounds for
$\Phi_\alpha$. We do the estimates for $z\geq 0$ and
$z\leq 0$ separately.

\underline{For $z\leq 0$} it suffices to consider $z\in (-1,0)$. 
First observe $g_\alpha(z)\geq - \alpha z$ by convexity. Next, we have
$h_\alpha(z)=1-(z{+}1)^\alpha$, which is concave because of $\alpha\geq 1$. Hence,
$h_\alpha  (0)=0$ and $h_\alpha(1)=1$ imply $h_\alpha(z)\geq -z$ for $z\in
[-1,0]$. Together we find 
\[
\Phi_\alpha(z) \geq \alpha z^2\  \text{ for } z\in [-1,0], \qquad
\Phi_\alpha(z)=\infty \ \text{ for }z<-1. 
\]

\underline{For $ z\geq 0$} we use $\log 2 >1/2$ which implies $\log y \geq
\min \big\{ (y{-}1)/2, 1/2 \big\}$ for $y\geq 1$. 
We find 
\[
g_\alpha(z)=\alpha \log(z{+}1) \geq \alpha \min\big\{ z/2, 1/2\big\}.
\]
For $h_\alpha$ we obtain $h_\alpha(z)=(z{+}1)^\alpha - 1 \geq z^\alpha$,
  which follows by observing that the derivative of both sides satisfy the same
  inequality and a subsequent integration. Moreover, $h_\alpha(z) \geq \alpha
  z$ by convexity. Hence, $h_\alpha(z) \geq \max\big\{\alpha z, z^\alpha
  \big\}$, and we arrive at 
\[
\Phi_\alpha(z) \geq \frac\alpha2 \,\max\big\{\alpha |z|, |z|^\alpha \big\} \,
\min\big\{ |z|, 1\big\} \quad \text{for } z\geq 0.
\]

\underline{For $\alpha=1$} the above estimate is too weak. To obtain a better estimate we
observe that $F_0(\rho) =\rho-\log \rho -\rho \geq 0$, see \eqref{eq:F.p}.
Hence, setting $z=\rho-1$ we have 
\[
\Phi_1(z)=(\rho{-}1) \log \rho = \LB(\rho) +F_0(\rho) \geq
\LB(\rho)=\LB(z{+}1) \quad \text{for }z\geq -1.\medskip
\] 

To obtain upper estimates for $\Phi_\alpha^*$ we  
use now again that the Legendre transform is order reversing and the fact that
for a family of functions $(f_i)_{i \in I}$ the equality
$\big( \inf_i (f_i) \big)^* = \sup_i \big( f_i^* \big)$ holds true, which is a direct
consequence of the definition (see \cite[Prop.\,3.50]{Peyp15CONS}). 

For $\alpha=1$ this gives
\[
\Phi_1^*(\xi) \leq \sup\big(\xi z - \LB(z{+}1)\big) = -\xi +
\LB^*(\xi) = \ee^{\xi}-\xi -1 \quad \text{for all } \xi \in \R.
\]
For $\alpha\in [1,2]$ the above estimates give $\Phi_\alpha(z) \geq
\min\big\{ \frac\alpha 2 |z|^\alpha, \frac\alpha 2 z^2\big\}$, and we obtain
\[
\Phi_\alpha^*(\xi) \leq  \max\big\{ \wt c_\alpha |\xi|^{\alpha/(\alpha-1)},
\, \frac1{2\alpha} \,\xi^2\big\}. 
\]
In the case $\alpha\geq 2$ we have $|z|^\alpha\leq |z|^2$ for $|z|\leq 1$ and 
find $\Phi_\alpha(z)\geq \frac\alpha2 |z|^\alpha$, which gives the desired
result. 

Finally, we observe that for all $\alpha\geq 1$ we have $\Phi_\alpha(z)\geq
\frac\alpha2\, z^2$ for $|z|\leq 1$, which implies $\Phi_\alpha^*(\xi) \leq
\frac1{2\alpha} \,\xi^2$ for $|\xi|\leq \alpha$. 
\end{proof}

\paragraph*{Acknowledgments.} The research was partially supported by Deutsche
Forschungsgemeinschaft (DFG) via the Collaborative Research Center SFB\,910
``Control of self-organizing nonlinear systems'' (project number 163436311),
subproject A5 ``Pattern formation in coupled parabolic systems''.

\footnotesize


\bibliographystyle{alpha_AMs}
\bibliography{alex_pub,bib_alex}

\newcommand{\etalchar}[1]{$^{#1}$}
\def\cprime{$'$}
\providecommand{\bysame}{\leavevmode\hbox to3em{\hrulefill}\thinspace}
\begin{thebibliography}{11}\itemsep0.1em

\bibitem[Ali79]{Alik79AIPR}
N.~D.~Alikakos: \emph{An application of the invariance principle to
  reaction-diffusion equations}. J. Diff. Eqns. \textbf{33}:2 (1979) 201--225.

\bibitem[Ber82]{Bert82ABSN}
M.~Bertsch: \emph{Asymptotic behavior of solutions of a nonlinear diffusion
  equation}. SIAM J. Appl. Math. \textbf{42}:1 (1982) 66--76.

\bibitem[BJ{\etalchar{*}}14]{BLMV14LFBD}
T.~Bodineau, L.~Joel, C.~Mouhot, and C.~Villani: \emph{Lyapunov functionals for
  boundary-driven nonlinear drift-diffusion equations}. Nonlinearity
  \textbf{27}:9 (2014) 2111--2132.

\bibitem[BrK92]{BriKup92RGGL}
J.~Bricmont and A.~Kupiainen: \emph{Renormalization group and the
  {G}inzburg-{L}andau equation}. Comm. Math. Phys. \textbf{150}:1 (1992)
  193--208.

\bibitem[CaT00]{CarTos00ALDS}
J.~A.~Carrillo and G.~Toscani: \emph{Asymptotic {$L^1$}-decay of solutions of
  the prous medium equation to self-similarity}. Indiana Univ. Math. J.
  \textbf{49}:1 (2000) 113--142.

\bibitem[CoE92]{ColEck92SPSG}
P.~Collet and J.-P.~Eckmann: \emph{Solutions without phase-slip for the
  {G}insburg-{L}andau equation}. Comm. Math. Phys. \textbf{145}:2 (1992)
  345--356.

\bibitem[DeF06]{DesFel06EDTE}
L.~Desvillettes and K.~Fellner: \emph{Exponential decay toward equilibrium via
  entropy methods for reaction-diffusion equations}. J. Math. Anal. Appl.
  \textbf{319}:1 (2006) 157--176.

\bibitem[DeF07]{DesFel07EMRD}
\bysame, \emph{Entropy methods for reaction-diffusion systems}, Discrete
  Contin. Dyn. Syst. (suppl). Dynamical Systems and Differential Equations.
  Proceedings of the 6th AIMS International Conference, 2007, pp.~304--312.

\bibitem[FeT17]{FelTan17EECE}
K.~Fellner and B.~Q.~Tang: \emph{Explicit exponential convergence to
  equilibrium for nonlinear reaction-diffusion systems with detailed balance
  condition}. Nonlinear Analysis \textbf{159}:C (2017) 145--180.

\bibitem[GaM98]{GalMie98DMSS}
T.~Gallay and A.~Mielke: \emph{Diffusive mixing of stable states in the
  {G}inzburg-{L}andau equation}. Comm. Math. Phys. \textbf{199}:1 (1998)
  71--97.

\bibitem[GaS22]{GalSli22DREE}
T.~Gallay and S.~Slijep\v{c}evi\'{c}: \emph{Diffusive relaxation to equilibria
  for an extended reaction-diffusion system on the real line}. J. Evol. Eqns.
  \textbf{22}:47 (2022) 1--33.

\bibitem[GGH96]{GlGrHu96FEDR}
A.~Glitzky, K.~Gr{\"o}ger, and R.~H{\"u}nlich: \emph{Free energy and
  dissipation rate for reaction diffusion processes of electrically charged
  species}. Applicable Analysis \textbf{60}:3-4 (1996) 201--217.

\bibitem[Gr{\"o}83]{Grog83ABSC}
K.~Gr{\"o}ger: \emph{Asymptotic behavior of solutions to a class of
  diffusion-reaction equations}. Math. Nachr. \textbf{112} (1983) 19--33.

\bibitem[Gr{\"o}92]{Grog92FEEA}
\bysame, \emph{Free energy estimates and asymptotic behaviour of
  reaction-diffusion processes}, WIAS preprint 20, 1992.

\bibitem[HH{\etalchar{*}}18]{HHMM18DEER}
J.~Haskovec, S.~Hittmeir, P.~A.~Markowich, and A.~Mielke: \emph{Decay to
  equilibrium for energy-reaction-diffusion systems}. SIAM J. Math. Analysis
  \textbf{50}:1 (2018) 1037--1075.

\bibitem[J{\"u}n16]{Jung16EMDP}
A.~J{\"u}ngel, \emph{Entropy methods for diffusive partial differential
  equations}, Springer, Berlin, Heidelberg, 2016.

\bibitem[MaP01]{MarPan01DPSC}
P.~Marcati and R.~Pan: \emph{On the diffusive profiles for the system of
  compressible adiabatic flow through porous media}. SIAM J. Math. Analysis
  \textbf{33}:4 (2001) 790--826.

\bibitem[MHM15]{MiHaMa15UDER}
A.~Mielke, J.~Haskovec, and P.~A.~Markowich: \emph{On uniform decay of the
  entropy for reaction-diffusion systems}. J. Dynam. Diff. Eqns.
  \textbf{27}:3-4 (2015) 897--928.

\bibitem[Mie11]{Miel11GSRD}
A.~Mielke: \emph{A gradient structure for reaction-diffusion systems and for
  energy-drift-diffusion systems}. Nonlinearity \textbf{24} (2011) 1329--1346.

\bibitem[Mie17]{Miel17UEDR}
\bysame, \emph{Uniform exponential decay for reaction-diffusion systems with
  complex-balanced mass-action kinetics}, Pattern of Dynamics (P.~Gurevich,
  J.~Hell, B.~Sandstede, and A.~Scheel, eds.), Springer Proc.\ in Math.\ \&
  Stat.\ Vol.\,205, Springer, 2017, pp.~149--171.

\bibitem[MiM18]{MieMit18CEER}
A.~Mielke and M.~Mittnenzweig: \emph{Convergence to equilibrium in
  energy-reaction-diffusion systems using vector-valued functional
  inequalities}. J. Nonlinear Sci. \textbf{28}:2 (2018) 765--806.

\bibitem[MiS23a]{MieSch21?ESPS}
A.~Mielke and S.~Schindler: \emph{Existence of similarity profiles for systems
  of diffusion equations}. Preprint arXiv2301.10360 (2023) .

\bibitem[MiS23b]{MieSch23?SSPC}
\bysame: \emph{Self-similar pattern in coupled parabolic systems as
  non-equilibrium steady states}. Chaos (2023) , Submitted WIAS Preprint 2992,
  arXiv2302.00393.

\bibitem[Mit18]{Mitt18EMQC}
M.~R.~Mittnenzweig, \emph{Entropy methods for quantum and classical evolution
  equations}, Ph.D. thesis, Humboldt-Universit\"at zu Berlin,
  Mathematisch-Naturwissenschaftliche Fakult\"at, 2018.

\bibitem[MP{\etalchar{*}}17]{MPPR17NETP}
A.~Mielke, R.~I.~A.~Patterson, M.~A.~Peletier, and D.~R.~M.~Renger:
  \emph{Non-equilibrium thermodynamical principles for chemical reactions with
  mass-action kinetics}. SIAM J. Appl. Math. \textbf{77}:4 (2017) 1562--1585.

\bibitem[MPR14]{MiPeRe14RGFL}
A.~Mielke, M.~A.~Peletier, and D.~R.~M.~Renger: \emph{On the relation between
  gradient flows and the large-deviation principle, with applications to
  {M}arkov chains and diffusion}. Potential Analysis \textbf{41}:4 (2014)
  1293--1327.

\bibitem[MSU01]{MiScUe01SDDE}
A.~Mielke, G.~Schneider, and H.~Uecker, \emph{Stability and diffusive dynamics
  on extended domains}, Ergodic Theory, Analysis, and Efficient Simulation of
  Dynamical Systems (B.~Fiedler, ed.), Springer--Verlag, 2001, pp.~563--583.

\bibitem[Pel71]{Pele71ABSP}
L.~A.~Peletier: \emph{Asymptotic behavior of solutions of the porous media
  equation}. SIAM J. Appl. Math. \textbf{21} (1971) 542--551.

\bibitem[Pey15]{Peyp15CONS}
J.~Peypouquet, \emph{Convex optimization in normed spaces: Theory, methods and
  examples}, Springer Briefs in Optimization, Springer, 2015.

\bibitem[PSZ17]{PiSuZo17ABRD}
M.~Pierre, T.~Suzuki, and R.~Zou: \emph{Asymptotic behavior of solutions to
  chemical reaction–diffusion systems}. J. Math. Anal. Appl. \textbf{450}
  (2017) 152--168.

\bibitem[Smo94]{Smol94SWRD}
J.~Smoller, \emph{{Shock Waves and Reaction-Diffusion Equations}}, Springer,
  1994.

\bibitem[{va}P77]{VanPel77ABSN}
C.~J.~{van Duyn} and L.~A.~Peletier: \emph{Asymptotic behaviour of solutions of
  a nonlinear diffusion equation}. Arch. Rational Mech. Anal. \textbf{65}
  (1977) 363--377.

\bibitem[V{\'a}z07]{Vazq07PMEM}
J.~L.~V{\'a}zquez, \emph{The porous medium equation. mathematical theory},
  Oxford: Clarendon Press, 2007.

\bibitem[Vol14]{Volp14EPDE}
V.~Volpert, \emph{Elliptic partial differential equations. volume 2:
  Reaction-diffusion equations}, Monographs in Mathematics, Springer Basel,
  2014.

\bibitem[VVV94]{VoVoVo94TWSP}
A.~Volpert, V.~Volpert, and V.~Volpert, \emph{Traveling wave solutions of
  parabolic systems}, American Mathematical Society, 1994.

\end{thebibliography}

\end{document}